\newtheorem{theorem}{Theorem}[section]
\newtheorem{lemma}[theorem]{Lemma}
\newtheorem{proposition}[theorem]{Proposition}
\newtheorem{prop}[theorem]{Proposition}
\newtheorem{corollary}[theorem]{Corollary}
\theoremstyle{definition}
\newtheorem{defn}[theorem]{Definition}
\newtheorem{definition}[theorem]{Definition}
\newtheorem{example}[theorem]{Example}
\newtheorem{remark}[theorem]{Remark}
\renewcommand{\k}{{\kappa}}
\DeclareMathOperator{\im}{Im}
\def\la{\leftarrow}
\def\lra{\leftrightarrows}
\newcommand{\lradot}{%
  \mathrel{\ooalign{\hfil$\vcenter{
   \hbox{$\mkern3mu\scriptscriptstyle\bullet$}}$\hfil\cr$\longleftrightarrow$\cr}
  }%
}
\newcommand{\scc}{\mathbf{S}}
\def\SS{\mathcal S}
\def\CC{\mathcal C}
\def\RR{\mathcal R}
\newcommand{\invtPoly}{\mathcal{P}}
\newcommand{\St}{{S}}
\newcommand{\Rnn}{\mathbb{R}_{\geq 0}}
\begin{document}
\title{Absolute concentration robustness in networks  with low-dimensional stoichiometric subspace}
%\title{Absolute concentration robustness in \\ networks with many conservation laws}
\author{Nicolette Meshkat\footnote{Santa Clara University}, Anne Shiu\footnote{Texas A\&M University}, and Angelica Torres\footnote{KTH Royal Institute of Technology}}
\date{\today}
%\date{April 30, 2021}
\maketitle  

\begin{abstract}
A reaction system exhibits ``absolute concentration robustness'' (ACR) in some species if the positive steady-state value of that species does not depend on initial conditions.  
Mathematically, this means that the positive part of the variety of the steady-state ideal lies entirely in a hyperplane of the form $x_i=c$, for some $c>0$. 
Deciding whether a given reaction system -- or those arising from some reaction network -- 
exhibits ACR is difficult in general, but here we show that for many simple networks, assessing ACR is straightforward.  Indeed, our criteria for ACR can be performed by simply inspecting a network or its standard embedding into Euclidean space. 
Our main results pertain to networks with many conservation laws, so that all reactions are parallel to one other.  Such ``one-dimensional'' networks include those networks having only one species.
We also 
consider networks with only two reactions, and show that ACR is characterized by 
a well-known criterion of Shinar and Feinberg.  
Finally, up to some natural
ACR-preserving 
operations -- 
relabeling species, lengthening a reaction, and so on 
--
only three families of networks with two reactions and two species have ACR.
Our results are proven using algebraic and combinatorial techniques.  

\vspace{0.1in}

\noindent
{\bf Keywords:} absolute concentration robustness, reaction network, mass-action kinetics, multiple steady states, roots of polynomials
\end{abstract}

\section{Introduction} \label{sec:intro}
The concept of {\em absolute concentration robustness} (ACR) was introduced by Shinar and Feinberg in their investigations into how biochemical systems maintain their function despite changes in the environment~\cite{shinar2010structural}.  
 A biochemical system exhibits ACR in some species $X_i$ if for every positive steady state $x$, regardless of initial conditions, the value of $x_i$ is the same.  
Thus, a system with ACR maintains species $X_i$ at a constant level, even amid environmental fluctuations. Mathematically, ACR occurs when the positive part of the variety of the steady-state ideal lies entirely in a hyperplane of the form $x_i=c$, for some $c>0$.  ACR is therefore rare.

Many of the known results on ACR are sufficient conditions for ACR, which arise from the reaction-network structure~\cite{shinar2010structural}, through the analyses of elementary modes~ \cite{neigenfind2011biochemical,neigenfind2013relation}, or by network ``translation'' (a way of understanding the steady states of one network through those of a closely related network)~\cite{tonello2017network}.  Algebraic methods also have been harnessed~\cite{karp2012complex, beatriz-elisenda, Mercedes}, and several important biochemical systems have been analyzed in the context of ACR~\cite{dexter2015,dexter2013dimerization}. 
There has also been interest from control theory~\cite{hidden, controller}.
Finally, ACR has also been considered in stochastic, rather than deterministic, models \cite{anderson2019discrepancies, Anderson2016,  A-E-J, Enciso2016}.

A starting point of this work is the first sufficient condition for ACR, from the original article of Shinar and Feinberg~\cite{shinar2010structural}. Here we show that their criterion completely characterizes ACR in networks having only two reactions (and at least two species), under mass-action kinetics, as long as a mild nondegeneracy condition is met.  (This condition ensures that a network like $\{B \lra A+B\}$ is viewed as having only one species, $A$, with $B$ acting only as a catalyst.) Therefore, ACR is easy to check for two-reaction networks (see Theorem~\ref{thm:2-rxn}), even though this is a difficult task for general networks.  

It turns out that 
for other classes of networks ACR is again easy to assess.  One such class is formed by networks with only one species -- or, more generally, those with many conservation laws so that all reaction vectors are scalar multiples of each other.  For these networks, ACR is checked simply from inspecting the directions of the reaction arrows in an embedding of the network in Euclidean space. % (or by examining certain ``arrow diagrams'' arising from the networks).  
See Proposition~\ref{prop:1-species} and Theorem~\ref{thm:1-dim-stable}.

Finally, we specialize our results to the case of networks with exactly 
two reactions and 
two species (Corollary~\ref{cor:2-rxn-2-species}). %(Theorem~\ref{thm:main-result-2-rxn-2-species}).  
We prove that up to certain network operations which preserve ACR -- such as relabeling species and lengthening a reaction -- there are exactly three families of networks with ACR:
%\begin{align*} 
%& \{0 \lra mA \} ~ {\rm for }~ m \geq 1~,  \quad 
%\{ A \to 2A, ~ A+ nB \to nB \}  ~ {\rm for }~ n \geq 1~, ~{\rm and}  \\
%& \{ B \to A, ~ pA+B \to (p-1)A+2B \} ~ {\rm for }~ p \geq 1~.
%\end{align*}
\begin{enumerate}
\item $ \{0 \lra mA \}$, for $m \geq 1$, 
\item $\{ A \to 2A, ~ A+ nB \to nB \} $, for $n \geq 1$, and
\item $\{ B \to A, ~ pA+B \to (p-1)A+2B \}$, for $p \geq 1$.
\end{enumerate}
In the third family,
the $p=1$ network 
goes back to the original article 
on ACR~\cite{shinar2010structural}.
%of Shinar and Feinberg~\cite{shinar2010structural}.  

It might seem that our results, which apply to a limited class of networks, are only moderately interesting. 
However, the original interest in ACR pertained to small networks and the capacity of ACR in such networks to confer robustness even when situated inside a larger network~\cite{shinar2010structural}.  
Indeed, such results on how ACR -- and even the steady-state value -- are maintained when located within larger networks, are proved in a recent article of Cappelletti, Gupta, and Khammash~\cite{hidden}.  
Therefore, having a database and (as much as possible) a classification of small networks having ACR will aid in analyzing ACR -- and robustness more generally -- in applications.  

The outline of our work is as follows.  We begin with a Background in Section~\ref{sec:background}, and then introduce network operations in Section~\ref{sec:operations}.  
In Section~\ref{sec:1-species}, we present our results on networks with only one species.  
Next, we generalize those results to handle one-dimensional networks with any number of species in Section~\ref{sec:main-results}.  
In Section~\ref{sec:2-species-2-rxns}, 
we prove our results on 
networks with two reactions.    We end with a Discussion in Section~\ref{sec:discussion}.

\section{Background} \label{sec:background} 
This section recalls pertinent definitions and prior results related to reaction networks and ACR.

\subsection{Reaction networks} \label{sec:CRN} 
We start by defining a reaction network, and then we define the system of differential equations modelling the dynamics of networks under mass-action kinetics. 

\begin{definition}
A {{\em reaction network}} ${G} =(\SS,\CC,\RR)$ consists of three finite sets:
\begin{enumerate}
    \item a set of {{\em species}} ${\SS} := \{A_1,A_2,\dots, A_s\}$; 
    \item a set of {{\em complexes}} ${\CC} := \{y_1, y_2, \dots, y_p\}$, consisting of finite nonnegative-integer combinations of the species; and 
    \item a set of {{\em reactions}} ${\RR} \subseteq  (\CC \times \CC) \smallsetminus \{ (y_i,y_i) \mid y_i \in \CC\}$, which are ordered pairs of complexes, excluding diagonal pairs, 
    such that every complex takes part in at least one reaction. % for deficiency defn to be ok!
\end{enumerate}
\end{definition}

\noindent
Throughout our work, 
$s$ and $r$ denote the numbers of
species and reactions, respectively.  

It is customary to write a reaction $(y_i,y_j)$ as $y_i \to y_j$. The complexes $y_i$ and $y_j$ are called the \emph{reactant} and \emph{product} respectively.
Also, a reaction $y_i \to y_j$ is {{\em reversible}} if its {\em reverse reaction} $y_j \to y_i$ is also in $\RR$, and we denote such a pair by $y_i \rightleftharpoons y_j$.

It will sometimes be convenient to write reactions as $y\to y'$ (rather than $y_i \to y_j$) so that indices correspond to the species rather than the complexes.  This usage should be clear from context. Also, our examples will involve only a few species, and so we will write $A, B, C, \dots$ for the species rather than $A_1, A_2, A_3 \dots $.

\begin{example} \label{ex:generalized-shinar--Feinberg}
Let $n \geq 1$.  
The following {\em generalized Shinar--Feinberg network} has 2 species, 4 complexes, and 2 reactions:
\begin{align} \label{eq:generalized-shinar--Feinberg-network}
\{ B \to A, ~ nA+B \to (n-1)A+2B \}~.    
\end{align}
The $n=1$ version of this network was analyzed by Shinar and Feinberg~\cite{shinar2010structural}.  
\end{example}

Our next task is to explain how a network gives rise to a system of ODEs.  We begin by writing the $i$-th complex as $y_{i1} A_1 + y_{i2} A_2 + \cdots + y_{is}A_s$, 
where each $y_{ij} \in \mathbb{Z}_{\geq 0}$ 
is called the {{\em stoichiometric coefficient}} of $A_j$, for $j=1,2,\dots,s$.
Next, ${x_1},{x_2},\ldots,{x_s}$ represent the
concentrations of the $s$ species,
which we view as functions ${x_i(t)}$ of time $t$, and we define the monomial 
${\mathbf{x}^{y_i}} \,\,\, := \,\,\, x_1^{y_{i1}} x_2^{y_{i2}} \cdots  x_s^{y_{is}} $. 
%The monomial labels form the entries in the following vector: 
%$$ \Psi(x) \quad = \quad \bigl( x^{y_1}, ~ x^{y_2} , ~ \ldots ~ ,  ~ x^{y_m} \bigr)^t~.$$

A reaction $y_i \to y_j$, from the $i$-th complex to the $j$-th
complex, defines the {{\em reaction vector}}
 ${y_j-y_i}$, which encodes the
net change in each species 
resulting from the reaction. The {{\em stoichiometric matrix}} 
% Gamma
${\Gamma}$ is the $s \times r$ matrix whose $k$-th column 
is the reaction vector of the $k$-th reaction.  
Under mass-action kinetics, each reaction $y_i \to y_j$ comes with a
{{\em rate constant}} ${\kappa_{ij}}$,
which is a positive real parameter.

%---------------------------------
% The ODEs
%---------------------------------
% MASS-ACTION
A {{\em mass-action system}}, 
denoted by $(G, \kappa)$, 
is the 
dynamical system that arises, via mass-action kinetics, from a chemical reaction
network $G=(\SS, \CC, \RR)$ and a choice of rate constants $\kappa = (\kappa_{ij}) \in
\mathbb{R}^{r}_{>0}$ (recall that $r$ is the number of
reactions), as follows:
% ODE's of MASS-ACTION
\begin{align} \label{eq:ODE-mass-action}
\frac{d\mathbf{x}}{dt} \quad = \quad \sum_{ y_i \to y_j~ \in \RR} \kappa_{ij} \mathbf{x}^{y_i}(y_j - y_i) \quad =: \quad {f_{\kappa}(\mathbf{x})}~.
\end{align}

Viewing the rate constants as a vector of parameters 
$\kappa = (\kappa_{ij}) \in
\mathbb{R}^{r}_{>0}$, 
%$\kappa=(\kappa_1, \kappa_2, \dots, \kappa_m)$, {\color{red} we use both $\kappa_{ij}$ and $\kappa_i$ here; maybe can clarify!}
we have polynomials $f_{\kappa,i} \in \mathbb Q[\kappa,x]$, for $i=1,2, \dots, s$.  For simplicity, we often write $f_i$ instead of $f_{\kappa,i}$.

The {{\em stoichiometric subspace}}, 
  ${\St} := {\rm span} \left( \{ y_j-y_i \mid  y_i \to y_j~ {\rm is~in~} \RR \} \right)$, 
 is the vector subspace of
$\mathbb{R}^s$ spanned by all reaction vectors
$y_j-y_i$.  
Thus, $\St = \im(\Gamma)$, where $\Gamma$ is the stoichiometric matrix defined earlier.

Let $d=s-{\rm rank}(\Gamma)$.  
A {\em conservation-law matrix} of $G$, denoted by~$W$, is a row-reduced $d\times s$-matrix whose rows form a basis of 
the orthogonal complement of $S$. 
A trajectory $x(t)$ that starts at a 
    positive vector $x(0)=x^0 \in
    \mathbb{R}^s_{> 0}$ 
remains, for all positive time,
 in the following {\em stoichiometric compatibility class} with respect to the {total-constant vector} $c\coloneqq W x^0 \in {\mathbb R}^d$  (where $W$ is a conservation-law matrix): 
    \begin{align} \label{eqn:invtPoly}
    \scc_c~\coloneqq~ \{x\in {\mathbb R}_{\geq 0}^s \mid Wx=c\}~.
    \end{align}
%    {\color{orange} where $W$ is a $d\times s$ matrix whose rows form a reduced basis of the orthogonal complement of $S$.}

\begin{example}[Example~\ref{ex:generalized-shinar--Feinberg}, continued] \label{ex:generalized-shinar--Feinberg-cont}
Returning to the
 generalized Shinar--Feinberg network~\eqref{eq:generalized-shinar--Feinberg-network}, 
 the resulting ODEs~\eqref{eq:ODE-mass-action} are as follows:
 \begin{align} \label{eq:ode-shinar--Feinberg}
    \frac{dx_A}{dt} ~&=~ \kappa_1 x_B - \kappa_2 x_A^n x_B \\
    \frac{dx_B}{dt} ~&=~ -\kappa_1 x_B + \kappa_2 x_A^n x_B~, \notag
 \end{align}
 where $\kappa_1$ and $\kappa_2$ are the rate constants for the first and second reactions in~\eqref{eq:generalized-shinar--Feinberg-network} respectively. The stoichiometric subspace is spanned by $(1,-1)^T$ and so is one-dimensional.  The stoichiometric compatibility classes are the following line segments, for $c \in \mathbb{R}_{>0}$:
 \begin{align} \label{eq:example-compatibility-class}
 \scc_c ~=~ \{ (x_A,~x_B) \in \mathbb{R}^{2}_{\geq 0} \mid x_A+x_B=c\}~.
 \end{align}
\end{example}

Many of the results in this work pertain to networks, like the generalized Shinar--Feinberg networks, 
in which the stoichiometric subspace is one-dimensional.  Another class that will appear often 
%be featured prominently 
is formed by networks in which only a single species appears in all the reactions.  Accordingly, we give names to these networks, as follows.

\begin{definition} \label{def:1-d-1-species}
Let ${G} =(\SS,\CC,\RR)$ be a reaction network.  
\begin{enumerate}
    \item $G$ is {\em one-dimensional} if its stoichiometric subspace is one-dimensional.  
    \item $G$ is a {\em one-species network} if there exists a species $A_i \in \SS$ such that every reaction involves only $A_i$, that is, $(y,y') \in \RR$ implies that $y_j=y'_j=0$ for all species $A_j \in \SS \smallsetminus \{A_i\} $.
\end{enumerate}
\end{definition}

Every reaction network ${G} =(\SS,\CC,\RR)$ with $|\SS|=1$ is a one-species network.  However, our definition allows larger species sets, 
as long as the extra species do not take part in reactions, in order to accommodate our definitions of network operations in Section~\ref{sec:operations}.
%, for instance, transforming $\{A \to 2A\}$ to $\{A+B \to 2A+B\}$.

\begin{remark} \label{rem:1-d-net-in-lit}
One-dimensional networks have been analyzed recently in terms of multistationarity and multistability~\cite{tang-xu} and in the stochastic setting~\cite{wiuf2020classification, xu2020criteria}.
\end{remark}

\begin{remark} \label{rem:1-species}
Our definition of one-species networks is more general than how the term is used in~\cite{joshi-shiu-I}, but this should not cause any confusion.
\end{remark}

As mentioned above, one-dimensional and one-species networks form two classes of networks that play a key role in our results.  We also consider a third class, as follows.
%networks having ``catalyst-only species'', which we define next.

\begin{definition} \label{def:cat-only}
A species $A_i \in \SS$ is a {\em catalyst-only species} of a reaction network $G$ if, for all reactions $y \to y'$ of $G$, we have $y_i=y_i'$.
\end{definition}

\begin{example}[Degenerate-ACR network] \label{ex:daniele}
Let $n \geq 1$.  
Consider the following network, which was shown to us by Daniele Cappelletti:
\begin{align} \label{eq:daniele}
\{ A \to 2A, ~ A+ nB \to nB \}~.    
\end{align}
We call this network the {\em degenerate-ACR network} (we will see later in this section that this network has ACR but all steady states are degenerate). 
The species $B$ is a catalyst-only species, and the  ODEs~\eqref{eq:ODE-mass-action} are as follows:
 \begin{align} \label{eq:daniele-odes}
    \frac{dx_A}{dt} ~&=~ \kappa_1 x_A - \kappa_2 x_A x_B^n \\
    \frac{dx_B}{dt} ~&=~ 0
    \notag 
 \end{align}
 where $\kappa_1$ and $\kappa_2$ are the rate constants for the first and second reactions in~\eqref{eq:daniele}, respectively.
\end{example}

In Example~\ref{ex:daniele}, we saw that the catalyst-only species $B$ gave rise to an ODE with zero right-hand side.  It is easy to see that this generalizes, as follows.

\begin{lemma} \label{lem:cat-only-species}
 Let $A_i$ be a species of a reaction network $G$.  
 Then $A_i$ is a catalyst-only species of $G$
 if and only if 
    in the mass-action ODEs~\eqref{eq:ODE-mass-action}, 
    for all choices of positive rate constants, 
 we have %the ODE for $X_i$ is 0, that is, 
 $\frac{dx_i}{dt}=0$.
\end{lemma}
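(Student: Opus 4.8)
The plan is to prove both directions of the equivalence by directly inspecting the mass-action ODE for species $A_i$, as given in~\eqref{eq:ODE-mass-action}. The key observation is that the right-hand side $f_{\kappa,i}(\mathbf{x})$ is a sum, over all reactions $y \to y'$, of terms $\kappa \mathbf{x}^{y}(y'_i - y_i)$; that is, the $i$-th coordinate of $f_\kappa$ collects exactly those contributions weighted by the $i$-th entry $(y'_i - y_i)$ of each reaction vector. So the entire question reduces to understanding when this coordinate polynomial vanishes identically in $x$ for all positive $\kappa$.

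For the forward direction, I would assume $A_i$ is catalyst-only, so that $y_i = y'_i$ for every reaction $y \to y'$. Then each coefficient $(y'_i - y_i)$ is zero, so every term in the sum defining $\frac{dx_i}{dt}$ vanishes, giving $\frac{dx_i}{dt} = 0$ identically — and this holds for any choice of rate constants, since the rate constants only scale the (already zero) terms. This direction is essentially immediate.

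For the converse, I would argue the contrapositive, or equivalently work directly with the structure of the polynomial $f_{\kappa,i}$. Suppose $A_i$ is \emph{not} catalyst-only, so there exists at least one reaction $y \to y'$ with $y'_i - y_i \neq 0$. The cleanest route is to exploit that distinct reactant complexes $y$ give rise to distinct monomials $\mathbf{x}^y$, which are linearly independent as polynomials. Grouping the terms of $f_{\kappa,i}$ by reactant complex, the coefficient of a monomial $\mathbf{x}^y$ is $\sum \kappa_{y \to \cdot}(y'_i - y_i)$, summed over reactions out of that particular reactant; if $\frac{dx_i}{dt} = 0$ as a polynomial for all positive $\kappa$, then each such coefficient must vanish for all positive $\kappa$. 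The main obstacle — really the only subtle point — is handling the case where a single reactant complex $y$ has several outgoing reactions that could in principle cancel: one must rule out that a nonzero $(y'_i - y_i)$ is cancelled by other reactions sharing the same reactant monomial. This is resolved by noting that vanishing \emph{for all} positive $\kappa$ forces the coefficient polynomial in $\kappa$ to be identically zero, and since the $\kappa$'s are independent positive parameters multiplying integer constants $(y'_i - y_i)$ of like sign within a fixed sign pattern, a standard argument (e.g. taking partial derivatives in the relevant $\kappa_{ij}$, or specializing the rate constants) forces each individual $(y'_i - y_i) = 0$. This contradicts our assumption, completing the proof.

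In writing this up I would be careful to state explicitly that the phrase ``for all choices of positive rate constants'' is doing real work: it is what upgrades a pointwise vanishing into the vanishing of every coefficient, and hence into the purely combinatorial condition $y_i = y'_i$ for all reactions. I expect the whole argument to be short, with the converse being the only place requiring a genuine (if routine) linear-independence-of-monomials argument.
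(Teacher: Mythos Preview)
Your argument is correct. The paper does not actually supply a proof of this lemma: it is introduced with the remark that the observation from Example~\ref{ex:daniele} ``generalizes, as follows,'' and the lemma is then stated without proof. So there is nothing to compare against; your write-up simply fills in what the authors left as routine.

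One small comment on presentation: in the converse direction, the phrase about the $(y'_i - y_i)$ being ``of like sign within a fixed sign pattern'' is unnecessary and slightly misleading, since reactions sharing a reactant complex may well have $i$-th reaction-vector entries of opposite sign. The clean statement is just that $\sum_{y \to y'} \kappa_{y\to y'}(y'_i - y_i)$ is a linear form in the rate constants which vanishes on the open positive orthant, hence vanishes identically, so each coefficient $(y'_i - y_i)$ is zero. Your parenthetical suggestion of taking $\partial/\partial \kappa_{ij}$ already delivers exactly this, so you can simply drop the sign remark.
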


\subsection{Steady states} \label{sec:steady-state}
% DEFINITIONS: steady state, non degenerate, etc.
For a mass-action system,
a {{\em steady state}} is a nonnegative concentration vector ${\mathbf{x}^*} \in \Rnn^s$ at which the right-hand side of the ODEs~\eqref{eq:ODE-mass-action}  vanishes: $f_{\kappa} (\mathbf{x}^*) = 0$.  
A steady state $\mathbf{x}^*$ is {{\em nondegenerate}} if ${\rm Im}\left( df_{\kappa} (\mathbf{x}^*)|_{S} \right) = \St$, where ${df_{\kappa}(\mathbf{x}^*)}$ is the Jacobian matrix of $f_{\kappa}$ evaluated at $\mathbf{x}^*$.  
A nondegenerate steady state is {\em exponentially stable} (or, for brevity, {\em stable}) if each of the 
%$\sigma:= \dim(\St)$ 
$\dim(\St)$ 
nonzero eigenvalues of $df_{\kappa}(x^*)$ has negative real part. If one of these eigenvalues has positive real part, then $x^*$ is {\em unstable}. 

Our main interest in this work is in {{\em positive steady states}} $\mathbf{x} ^* \in \mathbb{R}^s_{> 0}$.  
The set of all positive steady states of a mass-action system is 
the {\em positive steady-state locus}.

% DEFINITION: MSS
\begin{definition}~ \label{def:mss}
% DEF: multistationary and NONDEGENERATELY MULTISTATIONARY
A network {\em admits a positive steady state}  
(respectively, is {{\em multistationary}})
if there exist positive rate constants $(\kappa_{ij})\in\mathbb{R}^m_{>0}$
such that, for the corresponding dynamical system~\eqref{eq:ODE-mass-action}, there is some stoichiometric compatibility class~\eqref{eqn:invtPoly} having at least one (respectively, at least two) positive steady state(s). 
\end{definition}

\begin{example}[Example~\ref{ex:generalized-shinar--Feinberg-cont}, continued] \label{ex:generalized-shinar--Feinberg-3}
From the ODEs~\eqref{eq:ode-shinar--Feinberg} of the
 generalized Shinar--Feinberg network, 
 we see that the positive steady-state locus is the set 
 $\{  (\sqrt[n]{\kappa_1/\kappa_2},~ b) \mid b \in \mathbb{R}_{>0} \}$.
This set intersects each stoichiometric compatibility class~\eqref{eq:example-compatibility-class} at most once. Thus, the
 generalized Shinar--Feinberg network is non-multistationary.  
\end{example}

\begin{example}[Example~\ref{ex:daniele}, continued]
\label{ex:daniele-2}
From the ODEs~\eqref{eq:daniele-odes} of the degenerate-ACR network, 
we see that the stoichiometric subspace is spanned by $(1,0)^{T}$ and the stoichiometric compatibility classes are the horizontal, closed half-rays $\invtPoly_c = \{(x_A, c) \mid x_A \in \mathbb{R}_{\geq 0} \}$, for $c \in \mathbb{R}_{>0}$.  Also, the positive steady-state locus is the interior of a single compatibility class,  $\invtPoly_{c^*}$, where $c^*= \sqrt[n]{\kappa_1/\kappa_2}$. It is straightforward to check that all of these positive steady states are degenerate.
\end{example}

Some of our results will pertain to networks with two reactions.  The following lemma states that such networks, if they admit a positive steady state, must be one-dimensional.

\begin{lemma} \label{lem:2-rxn-steady-state-implies-1-d}
Let $G$ be a reaction network with exactly two reactions.  If $G$ admits a positive steady state, then $G$ is one-dimensional.
\end{lemma}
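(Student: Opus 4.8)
The plan is to evaluate the mass-action steady-state equation $f_{\kappa}(x^*)=0$ at a positive steady state and read off a linear dependence between the two reaction vectors directly from positivity.

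First I would fix notation. Label the two reactions $y_1 \to y_2$ and $y_3 \to y_4$, and set $v_1 := y_2 - y_1$ and $v_2 := y_4 - y_3$, so that the stoichiometric subspace is $\St = \mathrm{span}(v_1, v_2)$ and hence $\dim \St \leq 2$. By the definition of a reaction network, reactions exclude diagonal pairs, so $y_2 \neq y_1$ and $y_4 \neq y_3$ as complexes; thus $v_1 \neq 0$ and $v_2 \neq 0$. The goal is to show that $v_1$ and $v_2$ are linearly dependent, which---together with their being nonzero---forces $\dim \St = 1$.

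Next, I would write out the right-hand side of the ODEs~\eqref{eq:ODE-mass-action}, namely $f_{\kappa}(x) = \kappa_1 \mathbf{x}^{y_1} v_1 + \kappa_2 \mathbf{x}^{y_3} v_2$ where $\kappa_1, \kappa_2 > 0$ are the two rate constants, and evaluate it at a positive steady state $x^* \in \mathbb{R}^s_{>0}$, obtaining
$$\kappa_1 (x^*)^{y_1}\, v_1 + \kappa_2 (x^*)^{y_3}\, v_2 = 0.$$
Since $x^* > 0$, the monomials $(x^*)^{y_1}$ and $(x^*)^{y_3}$ are strictly positive, and the rate constants are positive by hypothesis, so both scalar coefficients are strictly positive. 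Writing the relation as $a v_1 + b v_2 = 0$ with $a, b > 0$, I conclude $v_1 = -(b/a)\, v_2$, so the two nonzero reaction vectors are scalar multiples of one another and $\St$ is one-dimensional.

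The argument is essentially immediate once positivity is exploited, so there is no serious obstacle. The only point requiring care is to confirm that all four quantities multiplying the reaction vectors are strictly positive (so that the relation is a genuine \emph{nontrivial} dependence) and that the reaction vectors themselves are nonzero; it is precisely the combination of nonzero vectors together with a nontrivial linear relation that upgrades $\dim \St \leq 2$ to $\dim \St = 1$, rather than merely to $\dim \St \leq 1$.
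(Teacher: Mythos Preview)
Your argument is correct and is exactly the ``straightforward calculation'' the paper alludes to in lieu of a detailed proof (the paper simply cites \cite[Lemma~4.1]{joshi-shiu-I} or defers to this computation). There is nothing to add: the key points---nonzero reaction vectors from the exclusion of diagonal pairs, and strictly positive coefficients from positivity of the steady state and the rate constants---are precisely what is needed, and you have identified them cleanly.
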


\begin{proof}
This follows easily from~\cite[Lemma~4.1]{joshi-shiu-I} (or a straightforward calculation). 
\end{proof}

\begin{remark}
The converse of Lemma~\ref{lem:2-rxn-steady-state-implies-1-d} is false: 
the network $\{ 0 \to A \to 2A\}$ consists of two reactions and is one-dimensional, but does not admit a positive steady state.
\end{remark}

\subsection{Deficiency} \label{sec:deficiency}
The  deficiency $\delta$ is a nonnegative integer associated with each network (see Definition~\ref{def:deficiency} below). It is an important invariant, with a close relationship to steady states and their stability~\cite{feinberg-def0}.  
%It equals the difference   $\delta=\dim( \ker(\Sys) ) - l$, where $l$
%denotes the number of linkage classes.

To define deficiency, we need some terminology. 
By representing each reaction $(y_i,y_j)\in\mathcal{R}$ as $y_i\rightarrow y_j$,
we obtain a {\em reaction graph} $\mathcal{G}$ 
in which the vertices correspond to the complexes, and the (directed) edges correspond to the reactions. The \emph{linkage classes} of a network are the connected components of $\mathcal{G}$. 
The {\em terminal strong linkage classes} are the strongly connected components of $\mathcal{G}$ such that no reaction points out of the component (that is, there is no reaction $y \to y'$ with $y$ in the component but not $y'$).
%Two complexes $y$ and $y'$ are \emph{strongly connected} if they react to each other through a sequence of complexes, that is, if there are two sequences of complexes $y_1,\ldots ,y_k$ and $y_1',\ldots ,y_{k'}'$ such that the sequences of arrows
%\[y\rightarrow y_1 \rightarrow \cdots \rightarrow y_k \rightarrow y'\] and \[y'\rightarrow y'_1 \rightarrow \cdots \rightarrow y'_{k'} \rightarrow y\] are in $\mathcal{R}$. A \emph{terminal strong linkage class} is a nonempty 
%set $\Omega$ of complexes such that 
%(i) if $y,y' \in \Omega$, then $y$ and $y'$ are strongly connected, 
%and 
%(ii) if $y \in \Omega$ and $y' \in \mathcal{C} \smallsetminus \Omega$, then $y\to y'$ is not a reaction (that is, $(y,y')\notin\mathcal{R}$).
A complex is \emph{terminal} if it belongs to a terminal strong linkage class; otherwise, it is \emph{nonterminal}.

\begin{definition}\label{def:deficiency}
    The \emph{deficiency} of a reaction network is 
    \[\delta := p-\ell - \dim(S)~,\]
    where $p$ is the number of complexes, $\ell$ is the number of linkage classes, and $S$ is the stoichiometric subspace. 
\end{definition}

%Note that the deficiency depends only on the structure of the reaction network and not on the specific values of the rate constants. 

\begin{example}[Example~\ref{ex:generalized-shinar--Feinberg-3}, continued] \label{ex:generalized-shinar--Feinberg-4}
The generalized Shinar--Feinberg network has 
4 complexes and 2 linkage classes.  We saw earlier that the network is one-dimensional.  
Hence, the deficiency is $4-2-1=1$.  
\end{example}

\subsection{Geometric diagrams for networks} \label{sec:geometric-diagram}
 We recall how to represent a network geometrically, through the reaction diagram~\cite{ProjArg}.
 
 \begin{definition} \label{def:reaction-diagram}
 Let $G$ be a network with $s$ species.
  The {\em reaction diagram} of $G$ is the realization, in $\mathbb{R}^s$, of its reaction graph in which each reaction $y_1 A_1+ y_2 A_2 + \dots y_s A_s \to z_1 A_1+ z_2 A_2 + \dots z_s A_s$ 
  is depicted as the arrow from $(y_1,y_2,\dots, y_s) $ to $(z_1, z_2, \dots, z_s)$.
 \end{definition}
 
 \begin{remark}
 Reaction diagrams are also called ``Euclidean embedded graphs''~\cite{CraciunSIAGA}.
 \end{remark}

\begin{example}[Example~\ref{ex:generalized-shinar--Feinberg-4}, continued] \label{ex:generalized-shinar--Feinberg-5}
    The reaction diagram of the generalized Shinar--Feinberg network is shown 
    here:
    \begin{center}
    	\begin{tikzpicture}[scale=.75]
       % axes
    	\draw (-1,0) -- (3.25, 0);
    	\draw (0,-.5) -- (0, 3.5);
      % reactions
    	\draw [->] (0,1) -- (1, 0.1);
    	\draw [->] (4,1) -- (3, 2);
        % labels
        \node [left] at (0, 1) {$B$};
        \node [right] at (1, 0.3) {$A$};
        \node [right] at (4,1) {$nA+B$};
        \node [above] at (3, 2) {$(n-1)A+2B$};
    	\end{tikzpicture}
    \end{center}
    \end{example}

\begin{example} \label{ex:1}
The reaction diagram of the network 
$\{ 3A + 5B \to A + 6B~, ~A + 3B \to 3A + B \}$
is presented in Figure~\ref{fig:geom_rep}.
%\begin{align*}
%     \ce{3A + 5B ->[\kappa_1] A + 6B}\\[0.2cm]
%    \ce{A + 3B ->[\kappa_2] 3A + B}~.
%%     \ce{3A1 + 5A2 ->[\kappa_1] A1 + 6A2}\\[0.2cm]
%%    \ce{A1 + 3A2 ->[\kappa_2] 3A1 + A2}~.
%\end{align*}
\end{example}

\begin{figure}
    \centering
    \includegraphics[scale=0.6]{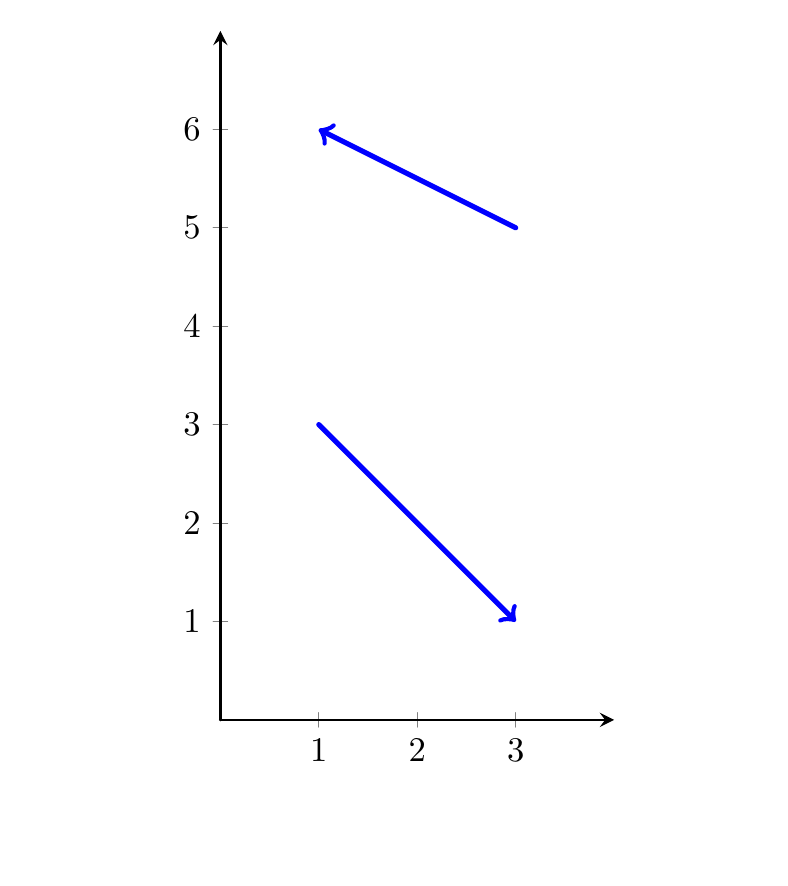}
    \caption{Reaction diagram 
    %Geometric representation 
    of the network in Example~\ref{ex:1}.}
    \label{fig:geom_rep}
\end{figure}

\subsection{Embedded networks and arrow diagrams} \label{sec:embedded-network}
Here we recall two concepts -- embedded networks and arrow diagrams -- which will be useful when we analyze one-dimensional networks~\cite{joshi-shiu-I}.

%-----
%DEFINITION OF RESTRICTION
%-----
\begin{definition} \label{def:restrict}
The {\em restriction} of a set of reactions $\RR$ to a set of species $\SS$, denoted by $\RR |_{\SS}$, is the set obtained from $\RR$ by (1) setting to 0 the stoichiometric coefficients of all species
not in $\SS$, and then (2) discarding any {\em trivial reactions} (reactions of the form $\sum m_i A_i \to \sum m_i A_i$, that is, in which the reactant and product complexes are equal) and keeping only one copy of any duplicate reactions that arise.
\end{definition}
\begin{definition} \label{def:emb}
The {\em embedded network} ${N}$ of a network $G = (\SS,\CC,\RR)$ 
 obtained by removing a set of reactions $\RR' \subseteq \RR$ and 
 a set of species $\{X_i\}_{i \in I} \subseteq \SS$ is
\[
{N} \ := \ \left( \SS|_{\CC|_{\RR_N}},~ \CC|_{\RR_N}, \ \RR_N := \left(\RR \smallsetminus   \RR' \right)|_{\SS \smallsetminus \{X_i\}_{i \in I} }\right)~,
\]
where
$ \CC|_{\RR_N}$ denotes the set of complexes of the set of reactions $\RR_N$, and 
$\SS|_{\CC|_{\RR_N}}$ denotes the set of species in the set of complexes 
$\CC|_{\RR_N}$.
\end{definition}

\begin{remark}[Embedded networks as projections] \label{rem:projection}
All embedded networks considered in this work are obtained by removing all species except one.  Accordingly, such embedding networks can be viewed as projections of the original network 
(or its reaction diagram) 
to the axis corresponding to the single species.  See, for instance, Example~\ref{ex:generalized-shinar--Feinberg-6} below or (in a later section) Figure~\ref{fig:ACR2spe2rxn}.
\end{remark}

%----------------------------------
% DEF: Arrow diagram
%----------------------------------
\begin{definition} \label{def:arrow-diagram}
Let $G$ be a one-species reaction network with at least one reaction. 
Let $A$ denote the unique species appearing in all reactions, so that each reaction of $G$  has the form $aA \to bA$, where $a,b \ge 0$ and $a \ne b$. Let $m$ be the number of (distinct) reactant complexes, and let $a_1< a_2 < \ldots < a_m$ be their stoichiometric coefficients. The {\em arrow diagram of $G$}, denoted $\rho = (\rho_1, \rho_2, \ldots , \rho_m)$, is the element of $\{\to , \la, \lradot \}^m$ defined by:
\begin{equation*}
 \rho_i~=~ 
 \left\lbrace\begin{array}{ll}
   \to & \text{if for all reactions $a_iA \to bA$ in $G$, it is the case that $b > a_i$} \\
   \la & \text{if for all reactions $a_iA \to bA$ in $G$, it is the case that $b < a_i$} \\
   \lradot & \text{otherwise.}
 \end{array}\right.
\end{equation*}
\end{definition}

\begin{example}[Example~\ref{ex:generalized-shinar--Feinberg-5}, continued] \label{ex:generalized-shinar--Feinberg-6}
For the generalized Shinar--Feinberg network, $\{ B \to A, ~ nA+B \to (n-1)A+2B \}$, 
the embedded network obtained by removing species~$A$ is $\{ 0 \leftarrow B \to 2B\}$, which has arrow diagram $(\lradot)$.  On the other hand, removing $B$ yields $\{0 \to A~,~ (n-1)A \leftarrow nA\}$, with arrow diagram $(\to, \leftarrow)$.
\end{example}

\begin{remark} \label{rem:arrow-diagram-literature}
Arrow diagrams have been used to analyze properties of one-species networks, specifically, multistationarity~\cite{JS}, multistability~\cite{tang-xu}, and the ``mixed volume''~\cite{mv-small-networks}.
\end{remark}

\subsection{Absolute concentration robustness} \label{sec:ACR}
Here we recall the definition of absolute concentration robustness (ACR), and introduce stable ACR.  
Stable ACR is the type of ACR that is most relevant in applications, as in this case the steady states are robust to small perturbations.  

 \begin{definition} \label{def:ACR-system}
Let $A_i$ be a species of a reaction network $G$.
	\begin{enumerate}[(i)]
	\item A mass-action system $( G, \kappa)$ has \emph{absolute concentration robustness} (ACR) in $A_i$ if the value of $x_i$ in every positive steady state $x$ of the system, is the same.  
This value of $x_i$ is the {\em ACR value}.
    \item A mass-action system $( G, \kappa)$ has {\em stable ACR} 
    %(or, respectively, {\em unstable ACR}) in species $A_i$ 
    if $( G, \kappa)$ has ACR in $A_i$ and additionally every positive steady state of the system is stable. 
    %(or, respectively, unstable).  
    \end{enumerate}
\end{definition} 

\begin{remark} \label{rem:no-pos-steady-states}
A mass-action system with no positive steady states trivially has ACR (and also stable %and unstable 
ACR); this is called ``vacuous ACR'' in~\cite{Mercedes}.  
Mass-action systems with vacuous ACR include those systems arising from networks with only one reaction. 
However, in applications, 
we are not interested in vacuous ACR.
Therefore, our main results will consider only networks that admit positive steady states.
\end{remark}

\begin{remark} \label{rem:local-acr}
A weaker form of ACR, in which some species has only finitely many positive steady-state values, was analyzed recently by Pascual-Escudero and Feliu~\cite{beatriz-elisenda}.
\end{remark}

In this work, we are interested in ACR at the level of networks, rather than at the level of one of its mass-action systems.  Accordingly, we introduce the following definition.
 
 \begin{definition} \label{def:ACR-network}
A network $ G$ with $m$ reactions and species $A_1, A_2, \dots, A_s$ has {\em ACR in}~$A_i$ if $( G, \kappa)$ has ACR in $A_i$ for every
		choice of $\kappa \in \mathbb R_{>0}^m$. Similarly, $G$ may have {\em stable ACR} 
		%or {\em unstable ACR} 
		in $A_i$.
Also, $G$ has {\em ACR} (or {\em stable ACR}) if $G$ has ACR
(or stable ACR)
in some species $A_i$.
\end{definition}

\begin{remark} \label{rem:acr-network-could-be-vacuous}
A network $G$ that has ACR (or stable ACR) could have vacuous ACR (as in Remark~\ref{rem:no-pos-steady-states}) for some or even all of its mass-action systems.  However, as mentioned earlier, we usually focus on networks that admit a positive steady state, so that vacuous ACR is permitted for some but not all of the resulting mass-action systems.
\end{remark}

\begin{example} \label{ex:1-species-4-reactions}
Consider the following network $G$, which has only one species: 
    \begin{align*}
    \ce{0 <-[\kappa_1] A1 } \quad 
    \ce{A1 ->[\kappa_2] 2A1} \quad
    \ce{A1 <-[\kappa_3] 2A1} \quad
    \ce{3A1 ->[\kappa_4] 4A1} 
    \end{align*}
It is straightforward to check that when 
$\kappa=(\kappa_1,\kappa_2,\kappa_3,\kappa_4)=(3,1,1,1)$, 
the mass-action system $(G,\kappa)$ has a unique positive steady state, namely, $x_{A_1}^*=2$, and hence has ACR (in species ${A_1}$).  
On the other hand, when 
$(\kappa_1,\kappa_2,\kappa_3,\kappa_4) = (1,3,3,1)$, 
there are two positive steady states ($x_{A_1}^*=1$ and $x_{A_1}^*=2$), and hence no ACR.  Thus, $G$ does not have ACR (but some of its mass-action systems do).
\end{example}

In Example~\ref{ex:1-species-4-reactions}, we saw a one-species network with ACR for some but not all of its mass-action systems; a more interesting example, with two species, was described in the thesis of P\'erez Mill\'an~\cite[Example~6.5.3]{Mercedes}.

%{\color{violet} Need to double-check definitions of stable/unstable above.}
The following sufficient condition for ACR  is due to Shinar and Feinberg~\cite{shinar2010structural}:

\begin{proposition} %[Shinar--Feinberg criterion]
\label{prop:shinar--Feinberg-criterion}
Suppose that a mass-action system $(G,\kappa)$ has a positive steady state, 
the deficiency of $G$ is 1, 
and $G$ has two non-terminal complexes that differ in only some species $A_i$. Then the system $(G,\kappa)$ has ACR in $A_i$.
\end{proposition}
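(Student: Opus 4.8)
The plan is to pass to the standard complex-based factorization of the mass-action vector field and to exploit the rigidity that deficiency one imposes on the Laplacian of the reaction graph. First I would rewrite the right-hand side of~\eqref{eq:ODE-mass-action} as $f_{\kappa}(\mathbf{x}) = Y A_\kappa \Psi(\mathbf{x})$, where $Y$ is the $s \times p$ matrix whose $k$-th column is the complex $y_k$, the vector $\Psi(\mathbf{x}) = (\mathbf{x}^{y_1}, \dots, \mathbf{x}^{y_p})^T$ collects the complex monomials, and $A_\kappa$ is the (negative) weighted graph Laplacian acting by $A_\kappa v = \sum_{y_i \to y_j \in \RR} \kappa_{ij}\, v_i (e_j - e_i)$. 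I would then record two standard facts from reaction-network theory. Writing $I_a$ for the incidence matrix of the reaction graph, so that $\Gamma = Y I_a$ and $S = Y(\im I_a)$, the rank--nullity theorem applied to $Y$ restricted to $\im I_a$ gives $\dim(\ker Y \cap \im I_a) = \dim(\im I_a) - \dim S = (p - \ell) - \dim S = \delta$. Moreover, by the Matrix--Tree theorem, $\ker A_\kappa$ is spanned by vectors each supported, with strictly positive entries, on a single terminal strong linkage class; in particular every vector in $\ker A_\kappa$ vanishes at each \emph{nonterminal} complex.

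Set $U := \ker Y \cap \im I_a$, so that $\dim U = \delta = 1$ by the deficiency hypothesis. Since each summand $e_j - e_i$ lies in $\im I_a$, we have $\im A_\kappa \subseteq \im I_a$. Thus for any positive steady state $\mathbf{a}$, the steady-state equation $Y A_\kappa \Psi(\mathbf{a}) = 0$ shows that $A_\kappa \Psi(\mathbf{a}) \in \ker Y \cap \im I_a = U$. I would next check that this vector is nonzero: if $A_\kappa \Psi(\mathbf{a}) = 0$, then $\Psi(\mathbf{a}) \in \ker A_\kappa$ would vanish at every nonterminal complex, contradicting $\Psi(\mathbf{a})_y = \mathbf{a}^y > 0$ at the nonterminal complex $y$ (here positivity of $\mathbf{a}$ is essential, as is the existence of a nonterminal complex). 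Hence $A_\kappa \Psi(\mathbf{a})$ is a nonzero element of the one-dimensional space $U$, so it spans $U$.

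Fixing a generator $\omega$ of $U$, every positive steady state $\mathbf{a}$ then satisfies $A_\kappa \Psi(\mathbf{a}) = c_{\mathbf{a}}\, \omega$ for some scalar $c_{\mathbf{a}} \neq 0$. Given two positive steady states $\mathbf{a}, \mathbf{b}$, linearity yields $A_\kappa\!\left( c_{\mathbf{a}}^{-1}\Psi(\mathbf{a}) - c_{\mathbf{b}}^{-1}\Psi(\mathbf{b}) \right) = 0$, so the combination $c_{\mathbf{a}}^{-1}\Psi(\mathbf{a}) - c_{\mathbf{b}}^{-1}\Psi(\mathbf{b})$ lies in $\ker A_\kappa$ and hence vanishes at every nonterminal complex. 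Evaluating at the two nonterminal complexes $y, y'$ that differ only in $A_i$ gives $c_{\mathbf{a}}^{-1}\mathbf{a}^{y} = c_{\mathbf{b}}^{-1}\mathbf{b}^{y}$ and $c_{\mathbf{a}}^{-1}\mathbf{a}^{y'} = c_{\mathbf{b}}^{-1}\mathbf{b}^{y'}$; dividing these eliminates the scalars and yields $\mathbf{a}^{\,y - y'} = \mathbf{b}^{\,y - y'}$. Writing $y - y' = m\, e_i$ with $m = y_i - y_i' \neq 0$, this reads $a_i^{m} = b_i^{m}$, whence $a_i = b_i$ since both are positive. As $\mathbf{a}$ and $\mathbf{b}$ were arbitrary, the system $(G,\kappa)$ has ACR in $A_i$.

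The substantive content is concentrated in the two structural facts invoked at the outset: the deficiency identity $\dim(\ker Y \cap \im I_a) = \delta$, and the Matrix--Tree description of $\ker A_\kappa$ as being supported on terminal strong linkage classes. Neither is established in the excerpt, so I would either cite them or include short proofs; the positivity of the kernel generators (via Perron--Frobenius) is what guarantees that a kernel vector vanishes precisely at nonterminal complexes. Granting these, the real idea is simply that deficiency one collapses $U$ to a line, and that this one-dimensionality, together with the terminal-support of $\ker A_\kappa$, forces the monomial ratio $\mathbf{a}^{\,y-y'}$ to be the same at every positive steady state. The remaining steps are routine bookkeeping.
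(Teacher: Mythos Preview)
The paper does not supply its own proof of this proposition; it is quoted as a result of Shinar and Feinberg with a citation to~\cite{shinar2010structural}, and is then immediately restated at the network level as Proposition~\ref{prop:shinar--Feinberg-criterion-network}. So there is no ``paper's proof'' to compare against.

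Your argument is correct and is precisely the standard proof one finds in the Shinar--Feinberg literature: factor the vector field through the complex space, use the deficiency identity $\dim(\ker Y \cap \im I_a) = \delta$ to see that $A_\kappa \Psi(\mathbf{a})$ lies in a one-dimensional space for every positive steady state, invoke the Matrix--Tree/Feinberg--Horn--Jackson description of $\ker A_\kappa$ (support on terminal strong components) to rule out $A_\kappa \Psi(\mathbf{a}) = 0$, and then take the ratio of the two nonterminal-complex coordinates to eliminate the scalar and extract $a_i = b_i$. All steps are sound; the only things you are treating as black boxes are exactly the two structural facts you flag at the end, and you correctly note that they would need to be cited or proven separately. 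If you wanted to be fully self-contained you could add a line explaining why $\im A_\kappa \subseteq \im I_a$ (each generator $e_j - e_i$ is a column of $I_a$), but you already say this. Nothing is missing.
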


In Proposition~\ref{prop:shinar--Feinberg-criterion}, the assumption of having a positive steady state is not required (recall Remark~\ref{rem:no-pos-steady-states}), so we can restate the criterion at the level of networks as follows.

\begin{proposition} [Shinar--Feinberg criterion]
\label{prop:shinar--Feinberg-criterion-network}
If $G$ is a reaction network that has deficiency 1 and has two non-terminal complexes that differ in only some species $A_i$, 
then $G$ has ACR in $A_i$.
\end{proposition}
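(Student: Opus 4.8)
The plan is to reduce the network-level statement (Proposition~\ref{prop:shinar--Feinberg-criterion-network}) to the system-level statement (Proposition~\ref{prop:shinar--Feinberg-criterion}) that is stated just above it, which we are entitled to assume. The content to be verified is essentially the remark preceding it: the hypothesis ``$(G,\kappa)$ has a positive steady state'' in Proposition~\ref{prop:shinar--Feinberg-criterion} can be dropped when we quantify over all rate constants.

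First I would fix an arbitrary $\kappa \in \mathbb{R}_{>0}^m$ and split into two cases according to whether the mass-action system $(G,\kappa)$ admits a positive steady state. In the case where $(G,\kappa)$ has no positive steady state, the system has vacuous ACR in every species by Remark~\ref{rem:no-pos-steady-states}, and in particular in $A_i$, so there is nothing to prove. In the case where $(G,\kappa)$ does admit a positive steady state, the full hypotheses of Proposition~\ref{prop:shinar--Feinberg-criterion} are met: the deficiency of $G$ is $1$ and $G$ has two non-terminal complexes differing only in species $A_i$, these being properties of the network $G$ alone and hence independent of $\kappa$. Applying Proposition~\ref{prop:shinar--Feinberg-criterion} then gives that $(G,\kappa)$ has ACR in $A_i$.

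Since in both cases the mass-action system $(G,\kappa)$ has ACR in $A_i$, and $\kappa \in \mathbb{R}_{>0}^m$ was arbitrary, Definition~\ref{def:ACR-network} yields that the network $G$ has ACR in $A_i$, completing the argument.

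There is essentially no obstacle here: the only subtlety is the case analysis on the existence of a positive steady state, and the point is simply that the deficiency and the non-terminal-complex condition are structural invariants of $G$ that do not depend on the choice of rate constants, so the hypotheses of the system-level proposition transfer uniformly across all $\kappa$ for which a positive steady state exists. The statement is a clean ``promotion'' of a per-system result to a network-level result, and the entire mathematical weight rests on the already-assumed Proposition~\ref{prop:shinar--Feinberg-criterion}.
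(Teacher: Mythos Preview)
Your proposal is correct and follows exactly the approach the paper takes: the paper does not give a separate proof but simply notes, in the sentence preceding the proposition, that the positive-steady-state hypothesis in Proposition~\ref{prop:shinar--Feinberg-criterion} can be dropped by invoking Remark~\ref{rem:no-pos-steady-states} (vacuous ACR). You have spelled out precisely this reduction.
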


\begin{example}[Example~\ref{ex:generalized-shinar--Feinberg-6}, continued] \label{ex:gen-canonical-network}
Returning to the generalized Shinar--Feinberg network, we saw (in Example~\ref{ex:generalized-shinar--Feinberg-cont}) that every positive steady state $(x_A, x_B)$ satisfies $x_A=\sqrt[n]{\kappa_1/\kappa_2}$.  Thus, the network has ACR in the species $A$.  
Another way to detect ACR in this network is through the Shinar--Feinberg criterion (Proposition~\ref{prop:shinar--Feinberg-criterion-network}).
Indeed, 
$B$ and $nA+B$ are non-terminal complexes that differ only in species $A$, and we saw in Example~\ref{ex:generalized-shinar--Feinberg-4} that this network has deficiency 1. 
\end{example}

\begin{example}[Example~\ref{ex:daniele-2}, continued]
\label{ex:daniele-3}
The degenerate-ACR network has ACR in species $B$: we saw in 
Example~\ref{ex:daniele-2} that every positive steady state $(x_A,x_B)$ satisfies $x_B = \sqrt[n]{\kappa_1/\kappa_2}$. Notably, the Shinar--Feinberg criterion also applies. 
\end{example}

Next, we see that the Shinar--Feinberg criterion does not cover all cases of ACR.
\begin{example}[Shinar--Feinberg criterion is not necessary for ACR] \label{ex:acr-but-not-shinar--Feinberg}
Consider the one-species network 
$\{0 \lra A \}$.  
It is straightforward to check by hand (or by applying Proposition~\ref{prop:1-species} in a later section) that this network has ACR.
Nonetheless, the Shinar--Feinberg criterion (Proposition~\ref{prop:shinar--Feinberg-criterion-network}) does not apply: the deficiency is $\delta = 2-1-1=0$.  
More generally, the networks $\{0 \lra nA \}$, for $n \geq 1$, also have ACR and deficiency 0.
Another network with ACR that is not detected by the Shinar--Feinberg criterion is $\{0 \to A \to 2A~,~ 3A \leftarrow 4A\}$, which has deficiency $\delta = 5-2-1=2$.  
\end{example}

In fact, there are no known necessary conditions for ACR (for general networks) that are easy to check.  We clarify this claim in light of a recent article which gave a purported necessary condition~\cite{eloundou2016network}.  The setup for this condition involves a network $G$, one of its species $A_i$, and the embedded network $H$ obtained by removing species $A_i$ from $G$.  What is claimed in~\cite{eloundou2016network} is that, if the mass-action system $(G,\kappa)$ has a positive steady state and the mass-action system $(H, \kappa(H))$ also has a positive steady state 
(here, $\kappa(H)$ indicates that 
the rate constant of a
reaction $y \to y'$ of $H$ is the sum of the rate constants of those reactions of $G$ that project to $y\to y'$ -- this definition is inferred from but not explicitly given in~\cite{eloundou2016network}), then a necessary condition for $G$ to have ACR in $A_i$ is that $G$ and $H$ have the same deficiency.  This claim, however, is overstated, as the following example shows.

\begin{example} \label{ex:eloundou}
Consider the network $G$ given by 
$\{B \to A,~ A+B \to 2B,~ 2A+B \to A+2B\}$.
%$\{A+B \to 2B,~ 2A+B \to 3A,~ 3A+B \to 4A\}$.  
It is straightforward to check (for instance, apply Theorem~\ref{thm:1-dim-stable}) that for all rate constants $\kappa$, the mass-action system $(G,\kappa)$ has a positive steady state.  Next, the embedded network $H$ obtained by removing $A$ is $\{0 \la B \to 2B\}$, which admits a positive steady state when the two rate constants are equal.  The deficiency of $G$ is $6-3-1=2$, while the deficiency of $H$ is $3-1-1=1$.  Nevertheless, $G$ has ACR in $A$ (this can be checked by hand, or we will see this from Theorem~\ref{thm:1-dim-stable}).
\end{example}

\section{Network operations} \label{sec:operations}

This section introduces several operations on reaction networks (Definition~\ref{def:operations}), 
which correspond to the following geometric operations on the reaction diagram: 
(1) 
reflecting the diagram across some %one or more of the 
diagonal hyperplanes $x_i=x_j$, 
(2) 
translating the diagram, 
(3) 
changing the length of one arrow, 
(4) 
replacing an arrow with two arrows of different lengths
and 
(5) 
%scaling 
%stretching (in one direction) 
applying certain rotations to each arrow in the diagram
%and 
(see Lemma~\ref{lem:translate}).
Our main result states that all of these operations preserve ACR (Theorem~\ref{thm:operations-ACR}).  
We also clarify which of these operations preserve 
%deficiency (Proposition~\ref{prop:operations-preserve-deficiency}) or the steady-state locus  (Proposition~\ref{prop:operations-preserve}).
deficiency, the dimension of the stoichiometric subspace, and the steady state locus.  These results are summarized in Table~\ref{tab:summary-operations}.

\begin{table}[htb]
\begin{tabular}{lcccc}
\hline
Operation            & Deficiency? & Dim? & Positive steady-state locus?          & ACR? \\
\hline
Relabel species      & Yes         & Yes                                   & No (reflects across hyperplanes) & Yes  \\
Translate            & Yes         & Yes                                   & Yes                                   & Yes  \\
Stretch a reaction   & No          & Yes                                   & No*                                   & Yes  \\
Duplicate a reaction & No          & Yes                                   & No*                                   & Yes  \\
Partial scaling      & No          & Yes                                   & Yes                                   & Yes 
\\
\hline
\end{tabular}
\caption{Five operations on networks (from Definition~\ref{def:operations}) and whether they preserve the properties of deficiency, dimension of the stoichiometric subspace, the positive steady-state locus, and absolute concentration robustness (ACR).
For details, see
Propositions~\ref{prop:4-opers-preserve-dim},~\ref{prop:operations-preserve-deficiency}, and~\ref{prop:operations-preserve}; 
%Proposition~\ref{prop:operations-preserve-deficiency} %def
%Proposition~\ref{prop:4-opers-preserve-dim}, %dim
%Proposition~\ref{prop:operations-preserve} %locus
Theorem~\ref{thm:operations-ACR}; % ACR
and Examples~\ref{ex:stretch-def}--\ref{ex:partial-scale-def}. 
Also, ``No*'' indicates that an appropriate transformation of the reaction rate constants is required so that the steady-state locus is preserved; see Lemma~\ref{lem:stretching} and the
proof of Theorem~\ref{thm:operations-ACR}. 
}
\label{tab:summary-operations}
\end{table}

%The following definition introduces several ways in which one network can be obtained from another in an elementary way.

\begin{defn}[Network operations] \label{def:operations}
Consider two reaction networks ${G}=(\SS, \CC, \RR)$ and ${G'}=(\SS, \CC', \RR')$, both with species set $\SS=\{A_1, A_2, \dots, A_s\}$.  We say that $G'$ is obtained from $G$ by:
%We have the following network operations, as long as the resulting network is mass-action:
\begin{enumerate} 
%
%, for any $k_i \neq 0$,  if (for some species $X_i$) 
% PERMUTING
\item {\em relabeling species} if there exists a permutation $\sigma$ of $\{1,2, \dots, s\}$
 such that the complexes and reactions of $G'$ are obtained from those of $G$ via $\sigma$; that is, 
 $ y_{1} A_1 + y_{2} A_2 + \cdots + y_{s} A_s \in \CC$ 
 if and only if 
 $y_{ \sigma(1)} A_{\sigma(1)} + 
 y_{ \sigma(2)} A_{\sigma(2)} + \cdots + 
 y_{ \sigma(s)} A_{\sigma(s)} \in \CC'$,
 and similarly for $\RR'$.
%{\em permuting variables} $x_1,...,x_n$ 
%
% TRANSLATION
\item {\em translation by} %$(i_1A_1,i_2A_2,...,i_nA_n)$, where 
$z = (z_1,z_2,..., z_s) \in \mathbb{Z}^s$, if the complexes and reactions of $G'$ are obtained by translating those of $G$ by 
$\widetilde z := z_1 A_1 + z_2 A_2 + \dots + z_s A_s$, %{\color{red} should we write $\widetilde z$ as $z_1A_1 +\cdots + z_sA_s$? We defined before the complexes as linear combinations}, 
% DONE
that is, 
$\CC' = \{ y + \widetilde z \mid y \in \CC\}$ and 
$\RR' = \{ (y+ \widetilde z, y'+ \widetilde z) \mid (y,y') \in \RR \} $.

\item {\em stretching a reaction} if $\RR'$ is obtained from $\RR$ by replacing some reaction $(y, y') \in \RR$ by 
a reaction $(y, \widetilde{y}')$ such that the reaction vectors $y'-y$ and $\widetilde{y}'-y$ are positive-scalar multiples of each other.

\item {\em duplicating a reaction} if $\RR'$ is obtained from $\RR$ by replacing one reaction $(y, y') \in \RR$ by two reactions $(y, w)$ and $(y, z)$ that are not in $\RR$ %, for some $w \neq z$, 
such that $w-y$ and $z-y$ are both positive-scalar multiples of $y'-y$.  
%{\color{red} May want to define the opposite of this operations... replacing 2 reactions by 1.}

%
% PARTIAL SCALING
\item {\em partial scaling} 
if, for some species $A_i$ and some $\alpha \in \mathbb{R} \smallsetminus \{0\}$, each reaction $(y , y')$ in~$\RR$ is replaced by $(y, \widetilde y' )$, where 
$\widetilde y' = \widetilde y'_1 A_1 +  \widetilde y'_2 A_2 + \dots + \widetilde y'_s A_s $ is defined by $\widetilde y'_j:=y'_j$ if $j \neq i$ and $\widetilde y'_j:=y_i +\alpha (y'_i-y_i)$ 
(equivalently, 
$\widetilde y'_j := 
\alpha y'_i + (1-\alpha) y_i$)
if $j=i$.
\end{enumerate}
\end{defn}

%{\color{red} Could ``partial scaling'' be instead called ``partial rotation'' or something?  Not sure.}

\begin{remark} \label{rem:operations-prior-lit}
Many of the operations in Definition~\ref{def:operations} have appeared in prior works.  For instance, translation and the fact that it preserves the positive steady-state locus appeared in works of Dickenstein~\cite{invitation}
and Boros, Craciun, and Yu~\cite{boros-craciun-yu}.  Also, the ``scalar multiplication'' operation in~\cite{boros-craciun-yu} is closely related to partial scaling, and the stretching operation was mentioned in~\cite[\S 4]{mv-small-networks}.  
Indeed, most of the properties of operations detailed in this section are well known, but for completeness we present the relevant results. 
\end{remark}

\begin{remark} \label{rem:johnston}
Our network operation of translation is a special case of the ``network translations'' introduced by Johnston~\cite{translated, tonello2017network}.  We translate the entire network, while Johnston allows translating parts of the network (typically linkage classes).  
\end{remark}

\begin{remark}[Relation to identifiability of networks] 
The operations of stretching and duplicating reactions affect the mass-action ODEs
%by scaling a rate constant or replacing a rate constant by a linear combination of two rate constants 
at the level of the rate constants, such that the new system is ``dynamically equivalent''
to the original (see Lemma~\ref{lem:stretching}).  
Hence, as is well known, the mass-action ODEs (for a given choice of positive rate constants) are not enough to identify the specific network.  
Indeed, the fact that the operations of stretching and duplicating yield networks that are ``confoundable'' with the original network follows easily from results of Craciun and Pantea~\cite{CraciunPantea}.
\end{remark}

\begin{remark}[Adding networks] \label{rem:add-networks}
Boros, Craciun, and Yu considered the operation of ``adding'' networks, that is, taking unions to obtain the species, complexes, and reactions~\cite{boros-craciun-yu}.  This operation need not preserve ACR.  For instance, by adding the networks $\{0 \lra A\}$ and $\{ 2A \lra 3A\}$, both of which have ACR, we obtain the network 
$\{0 \lra A,~ 2A \lra 3A\}$, which does not (see Proposition~\ref{prop:1-species}).  
%Accordingly, we do not consider this operation here.
\end{remark}

\begin{remark} \label{rem:full-scaling}
Scaling the ODEs of {\em all} species by a single nonzero scalar $\alpha$ 
is often seen in the literature. This procedure  
can be viewed in the network either as applying partial scaling to each species, one at a time, 
or (if $\alpha>0$) as stretching all reactions by a factor of $\alpha$ 
(cf.~\cite[\S 3.2]{boros-craciun-yu}).
%A special case of partial scaling is a full scaling of every equation by $k$.
\end{remark}

\begin{remark} \label{rem:negate}
For a given network $G$, applying certain operations may be impossible; 
for instance, stretching a reaction might force the reaction arrow out of nonnegative orthant $\mathbb{R}^s_{\geq 0}$.  In such cases, it may be useful to first translate $G$ farther into the orthant before applying other operations.
%Also, one or more equations can be negated as a special case.  In this case, one may need to follow negation by translation so that the network is still mass-action.
\end{remark}

\begin{remark}[Reverse operations] \label{rem:reverse-operation}
It is straightforward to see that, each operation in Definition~\ref{def:operations}, except for duplicating a reaction, can be reversed by another version of the same operation. 
Also, these operations are easily seen to commute, as long as at each step a network is obtained (recall Remark~\ref{rem:negate}). 
%{\color{purple} Should we discuss the composition of operations? How about commutativity and associativity?}
\end{remark}

The main result of this section is as follows.

\begin{theorem}[Operations and ACR] \label{thm:operations-ACR}
    Consider two reaction networks $G$ and $G'$, such that $G'$ is obtained from $G$ by one of the operations in Definition~\ref{def:operations}.  
    Then $G$ has ACR if and only $G'$ has ACR.
\end{theorem}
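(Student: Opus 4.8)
The plan is to prove the equivalence separately for each of the five operations, since ACR is preserved if and only if it is preserved under each individual operation (and the operations commute by Remark~\ref{rem:reverse-operation}). For each operation, the strategy is to track what happens to the positive steady-state locus and then check whether the defining condition for ACR -- that the positive part of the variety of the steady-state ideal lies in some hyperplane $x_i = c$ -- is retained. Since several operations (relabel, translate, partial scaling) have no effect or only a controlled effect on the steady-state locus per Table~\ref{tab:summary-operations}, these cases should reduce to bookkeeping; the genuine content lies in stretching and duplicating, where the locus itself changes but only through a reparametrization of the rate constants.

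\medskip

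First I would handle \emph{translation}. By the quoted fact (attributed to Dickenstein and to Boros--Craciun--Yu in Remark~\ref{rem:operations-prior-lit}), translating by $\widetilde z$ leaves the mass-action ODE right-hand side unchanged up to the common monomial factor $\mathbf{x}^{\widetilde z}$, so $f_{\kappa}^{G'}(x) = \mathbf{x}^{\widetilde z} f_{\kappa}^{G}(x)$; on the positive orthant this factor never vanishes, so the positive steady-state loci of $(G,\kappa)$ and $(G',\kappa)$ coincide exactly. Hence ACR in $A_i$ transfers verbatim in both directions. Next, \emph{relabeling species} by a permutation $\sigma$ sends the steady-state locus to its image under the coordinate permutation; ACR in $A_i$ for $G$ becomes ACR in $A_{\sigma(i)}$ for $G'$, with the same ACR value, and conversely. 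For \emph{partial scaling}, which by the table preserves the positive steady-state locus, I would verify directly from Definition~\ref{def:operations}(5) that the replacement $y_i' \mapsto \alpha y_i' + (1-\alpha)y_i$ multiplies the $i$-th reaction vector entry by $\alpha \ne 0$ and leaves the reactant monomials $\mathbf{x}^{y}$ unchanged, so the steady-state equations change only by a nonzero rescaling within each polynomial's relevant coordinate block -- again preserving the positive locus and thus ACR.

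\medskip

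The substantive cases are \emph{stretching} and \emph{duplicating}. For stretching, the key tool is Lemma~\ref{lem:stretching}: replacing a reaction $(y,y')$ by $(y,\widetilde y')$ with $\widetilde y' - y = \lambda(y'-y)$ for some $\lambda > 0$ yields a system that is dynamically equivalent to $(G,\kappa)$ after an appropriate transformation of the rate constant of that reaction. Concretely, if one scales the rate constant of the stretched reaction by $1/\lambda$, the contribution $\kappa \mathbf{x}^{y}(\widetilde y' - y)$ becomes $\kappa \mathbf{x}^{y}(y'-y)$, reproducing the original ODEs; therefore the map $\kappa \mapsto \kappa'$ on rate-constant space is a bijection carrying the steady-state locus of $(G,\kappa)$ to that of $(G', \kappa')$ unchanged. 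Since ACR is a property quantified over \emph{all} positive rate constants (Definition~\ref{def:ACR-network}) and this reparametrization is a bijection of $\mathbb{R}_{>0}^r$, ACR for $G$ holds iff ACR for $G'$ holds. Duplicating is analogous: splitting $(y,y')$ into $(y,w),(y,z)$ with $w-y = \mu(y'-y)$ and $z-y = \nu(y'-y)$ contributes $(\kappa_w \mu + \kappa_z \nu)\,\mathbf{x}^{y}(y'-y)$, so the two new rate constants surject onto the single original one; the locus is again preserved, though here the reparametrization is only surjective (not injective), which by Remark~\ref{rem:reverse-operation} is why duplication alone is not reversible. Even so, surjectivity onto $\mathbb{R}_{>0}$ suffices: every rate constant of $G$ is realized by some choice for $G'$ and vice versa, so the ``for all $\kappa$'' quantifier matches in both directions.

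\medskip

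The main obstacle, I expect, is the duplicating case, precisely because the rate-constant correspondence is not a bijection. I would need to argue carefully that ACR is preserved despite this: the forward direction (if $G'$ has ACR then $G$ does) uses that every ODE system of $G$ arises from some system of $G'$ (set the two new constants so that $\kappa_w \mu + \kappa_z \nu$ equals the original), while the reverse direction uses that \emph{every} system of $G'$ collapses to a system of $G$ via the same additive relation, so the steady-state loci of $G'$ are exactly those appearing for $G$. The only subtlety is confirming that no new positive steady states are introduced or lost -- but since the full ODE vector field is literally identical after the substitution, the positive steady-state locus is set-theoretically unchanged, which closes the argument. Assembling the five cases, and invoking the commutativity in Remark~\ref{rem:reverse-operation} to reduce a composite operation to single steps, completes the proof.
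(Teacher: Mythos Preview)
Your proposal is correct and takes essentially the same approach as the paper: both handle translation, relabeling, and partial scaling via the preservation (or coordinate-permutation) of the positive steady-state locus (this is the content of Proposition~\ref{prop:operations-preserve}), and both handle stretching and duplicating via Lemma~\ref{lem:stretching} by tracking the induced reparametrization of rate constants. Your discussion of the duplicating case is a bit more explicit about why surjectivity of $(\kappa_w,\kappa_z)\mapsto \mu\kappa_w+\nu\kappa_z$ onto $\mathbb{R}_{>0}$ suffices for both directions, but the argument is the same in substance.
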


We prove Theorem~\ref{thm:operations-ACR} in Section~\ref{sec:operations-ACR}.

\subsection{Operations and reaction diagrams} \label{sec:operations-diagram}

The following result follows easily from Definition~\ref{def:operations}.
%the definition, gives the correspondence between the four network operations and reaction-diagram operations.

\begin{lemma}[Operations and reaction diagrams] \label{lem:translate}
The first four network operations
in Definition~\ref{def:operations}
correspond to (respectively) the following geometric operations on the reaction diagram: 
\begin{enumerate}
   \item reflecting the diagram across the hyperplane 
    $x_{i_1}=x_{j_1}$, then across 
    $x_{i_2}=x_{j_2}$, and so on, ending with 
    $x_{i_n}=x_{j_n}$, where $\sigma = (i_n,j_n)\dots (i_2,j_2)(i_1,j_1)$ is a decomposition of the permutation $\sigma$ into a product of transpositions.

    \item translating the diagram by the vector $z$. 

    \item replacing an arrow in the diagram with a shorter or longer arrow in the same direction and having the same source as the original arrow.
    
    \item replacing an arrow in the diagram with two arrows in the same direction and with the same source of the arrow.
    \end{enumerate}

\end{lemma}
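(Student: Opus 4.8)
The plan is to verify each of the four correspondences by directly unpacking Definition~\ref{def:operations} in light of the reaction diagram (Definition~\ref{def:reaction-diagram}), under which a reaction $y \to y'$ is drawn as the arrow from the point $y=(y_1,\dots,y_s)$ to the point $y'=(y_1',\dots,y_s')$ in $\mathbb{R}^s$. For three of the four operations -- translation, stretching, and duplicating -- the geometric statement is read off essentially immediately from the defining formula, so the substance of the argument lies in the relabeling case, where the only genuine input is identifying a coordinate transposition with a reflection.

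For translation (operation~2), each complex $y$ is replaced by $y+\widetilde z$, so both endpoints of every arrow are shifted by the fixed vector $z=(z_1,\dots,z_s)$; this is by definition the translation of the whole diagram by $z$. For stretching (operation~3), the reaction $(y,y')$ is replaced by $(y,\widetilde y')$ with $\widetilde y'-y$ a positive-scalar multiple of $y'-y$, so the corresponding arrow keeps the same source $y$ and points in the same direction, with only its length changed. For duplicating (operation~4), the reaction $(y,y')$ is replaced by $(y,w)$ and $(y,z)$ with $w-y$ and $z-y$ both positive multiples of $y'-y$; geometrically these are two arrows sharing the source $y$, each parallel to and codirectional with the original. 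In each case I would simply match the defining formula against the asserted picture, noting that translation acts bijectively on all complexes while stretching and duplicating act locally on a single arrow.

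The one place needing a short argument is relabeling (operation~1). Here I would first record the elementary fact that reflection of $\mathbb{R}^s$ across the diagonal hyperplane $x_i=x_j$ is exactly the map interchanging the $i$-th and $j$-th coordinates: since this hyperplane has normal $e_i-e_j$, the reflection sends $p$ to $p-(p_i-p_j)(e_i-e_j)$, which swaps $p_i$ and $p_j$ and fixes all other coordinates, i.e.\ it realizes the transposition $(i,j)$ acting on coordinates. Tracking a single complex through the relabeling formula shows that the diagram of $G'$ is obtained from that of $G$ by permuting the coordinate axes according to $\sigma$ (that is, by the permutation matrix $P_\sigma$ with $P_\sigma e_i=e_{\sigma(i)}$). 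Then, using the homomorphism property $P_{\alpha\beta}=P_\alpha P_\beta$ and the decomposition $\sigma=(i_n,j_n)\cdots(i_2,j_2)(i_1,j_1)$, the factor $(i_1,j_1)$ is applied first, so composing the reflections across $x_{i_1}=x_{j_1}$, then $x_{i_2}=x_{j_2}$, and so on through $x_{i_n}=x_{j_n}$ realizes $P_\sigma$, which is precisely the stated sequence.

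The only step I expect to require care is the index bookkeeping in the relabeling case: confirming that the coordinate map induced by the definition is $P_\sigma$ rather than $P_{\sigma^{-1}}$, and correspondingly that the left-to-right ordering of the reflections in the lemma matches the right-to-left composition order of the transpositions. I would settle this once and for all by following one complex $\sum_j y_j A_j$ through the defining equivalence; no computation beyond this single indexing check is needed, which is exactly why the result ``follows easily'' from Definition~\ref{def:operations}.
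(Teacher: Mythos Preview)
Your proposal is correct and matches the paper's approach: the paper simply states that the lemma ``follows easily from Definition~\ref{def:operations}'' and gives no further argument, so your direct unpacking of each operation against the reaction-diagram picture is exactly what is intended. Your extra care in the relabeling case (identifying coordinate transpositions with diagonal reflections and checking the $P_\sigma$ versus $P_{\sigma^{-1}}$ bookkeeping) is more detail than the paper provides, but it is the right content and introduces nothing foreign to the intended one-line justification.
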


The fifth network operation, partial scaling, is more complicated to interpret as an operation on reaction diagrams (each arrow is rotated by some angle and then rescaled).  Accordingly, we give this interpretation only for networks with one-dimensional stoichiometric subspaces (Proposition~\ref{prop:1-d-operation}).  The effect on the ODEs, however, is easy to state:

\begin{lemma}[Partial scaling and ODEs] \label{lem:partial-scaling}
Assume that $G$ and $G'$ are reaction networks such that 
$G'$ is obtained from $G$ by partial scaling of the species $A_i$ by $ \alpha \in \mathbb R \smallsetminus \{0\}$.  Then:
\begin{enumerate}
    \item 
    the mass-action ODEs~\eqref{eq:ODE-mass-action} of $G'$ are obtained from those of $G$ by replacing the ODE $\dot{x}_i = f_i$ by $\dot{x}_i  = \alpha f_i$ (and leaving all other ODEs unchanged), and 
    \item the stoichiometric subspace of $G$ and the stoichiometric subspace of $G'$ have the same dimension.
\end{enumerate}
\end{lemma}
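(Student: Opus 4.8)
The plan is to trace how partial scaling affects the two ingredients that determine the mass-action ODEs~\eqref{eq:ODE-mass-action}: the reactant complexes, which govern the rate monomials $\mathbf{x}^y$, and the reaction vectors $y'-y$. Writing reactions as $y \to y'$ with species-indexed coordinates, I would first observe that partial scaling of $A_i$ leaves every reactant complex $y$ untouched, since the operation modifies only the product complex (and only in its $i$-th coordinate). Hence each rate monomial $\mathbf{x}^y$ is unchanged. Next I would compute the new reaction vector directly from the definition: the $j$-th coordinate of $\widetilde{y}'-y$ equals $y'_j - y_j$ for $j \neq i$, while the $i$-th coordinate equals $(y_i + \alpha(y'_i - y_i)) - y_i = \alpha(y'_i - y_i)$. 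In other words, $\widetilde{y}' - y = D_\alpha(y'-y)$, where $D_\alpha$ is the diagonal linear map on $\mathbb{R}^s$ that multiplies the $i$-th coordinate by $\alpha$ and fixes all the others. This single identification is the crux of the whole argument.

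Given these two observations, part (1) follows by reading off the ODEs~\eqref{eq:ODE-mass-action} coordinate by coordinate. For $j \neq i$, neither the monomial $\mathbf{x}^y$ nor the $j$-th coordinate $y'_j - y_j$ of any reaction vector changes, so the ODE $\dot{x}_j = f_j$ is unchanged. For $j = i$, each summand's contribution to $\dot{x}_i$ acquires the factor $\alpha$ coming from the scaled coordinate $\alpha(y'_i - y_i)$; since the same $\alpha$ multiplies every reaction, it factors out of the sum, yielding $\dot{x}_i = \alpha f_i$.

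For part (2), I would note that the stoichiometric subspace $S'$ of $G'$ is spanned by the new reaction vectors $\{D_\alpha(y'-y)\}$, so $S' = D_\alpha(S)$. Because $\alpha \neq 0$, the map $D_\alpha$ is a linear isomorphism of $\mathbb{R}^s$, and isomorphisms preserve the dimension of any subspace; hence $\dim(S') = \dim(S)$. I do not expect a genuine obstacle here: once the identity $\widetilde{y}'-y = D_\alpha(y'-y)$ is in hand, both claims are immediate. The only point demanding minor care is the notational collision between the index $i$ of the scaled species and the complex/reaction indexing in~\eqref{eq:ODE-mass-action}, which I would sidestep by using the $y \to y'$ convention with species-indexed coordinates throughout.
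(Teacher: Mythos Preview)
Your proof is correct and is essentially the same as the paper's: both compute that the reaction vector transforms as $(\widetilde{y}'-y)_j = (y'-y)_j$ for $j\neq i$ and $(\widetilde{y}'-y)_i = \alpha(y'_i-y_i)$, then deduce part~(1) from the unchanged reactant monomials and part~(2) from the invertibility of the resulting linear map. The paper phrases this as ``$\Gamma'$ is obtained from $\Gamma$ by scaling the $i$-th row by $\alpha$'' rather than via your diagonal map $D_\alpha$, but that is the same statement.
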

\begin{proof}
Both statements follow from the fact that the stoichiometric matrix $\Gamma'$ of $G'$ is obtained from the stoichiometric matrix $\Gamma$ of $G$ by scaling the $i$-th row of $\Gamma$ by $\alpha$. 
We explain this more precisely as follows.  Recall that the mass-action ODEs~\eqref{eq:ODE-mass-action} for $G$ can be rewritten as  
$\dot{x}= \Gamma v(x)$ 
%and $\dot{x}=N'v(x)$ respectively, 
where $\Gamma $ is the stoichiometric matrix of $G$ (recall from Section~\ref{sec:CRN} that the columns of $\Gamma$ are the reaction vectors 
$y' - y$ for all reactions $(y,y')$ of $G$)
%$y_j - y_i$
and $v(x)$ is the column vector of monomials arising from the reactant of each reaction.  The ODEs for $G'$ are similar:  $\dot{x}= \Gamma' v(x)$. 
For any 
species $A_j$, 
the $j$-th entry of the reaction vector arising from some reaction $(y, \widetilde{y}')$ of $G'$ is
$(\widetilde y' -y)_j = y_j'-y_j $ if $ j\neq i  $, and $(\widetilde y' -y)_i= y_i+\alpha(y_i'-y_i)-y_i =\alpha(y_i'-y_i)$. 
This means that $\Gamma '$ is obtained from $\Gamma$ by scaling the $i$-th row by $\alpha$.  
Part 1 of the lemma now follows easily. 
%immediately, due to the definition of the system of ODEs of $G$ and $G'$ as $\dot{x}=Nv(x)$ and $\dot{x}=N'v(x)$ respectively, where $v(x)$ is the vector of monomials coming from the reactants of the reactions. 
Next, part 2 follows from the fact that the dimension of the stoichiometric space of a network is the rank of its stoichiometric matrix, and multiplying a row of a matrix by a nonzero scalar preserves its rank. 
\end{proof}

\begin{remark}\label{rem:alpha-nonzero}
The proof of Lemma~\ref{lem:partial-scaling}
shows why (in Definition~\ref{def:operations}) we do not allow partial scaling by $\alpha=0$: doing so might decrease the dimension of the stoichiometric subspace.  Indeed, a one-species network would become an empty network (with no reactions).
\end{remark}

We illustrate Lemma~\ref{lem:partial-scaling} through the following example involving networks with one-dimensional stoichiometric subspaces.

\begin{example} \label{ex:canonical-operation}
The $n=1$ version of the generalized Shinar--Feinberg network is $\{ B\to A,~A+B \to 2B \}$. %(from~\cite{shinar2010structural}) 
By applying the operation of partial scaling to species $B$ by a factor of $-1$ -- 
we obtain $\{ B \to A+2B, ~ A+B \to 0 \}$.  The two reaction diagrams are shown here:
\begin{center}
	\begin{tikzpicture}[scale=.75]
    % CANONICAL NETWORK
    % axes
	\draw (-6,0) -- (-1.75, 0);
	\draw (-5,-.5) -- (-5, 2.75);
    % reactions
	\draw [->] (-5,1) -- (-4, 0);
	\draw [->] (-4,1) -- (-5, 2);
    % labels
    \node [left] at (-5, 1) {$B$};
    \node [below] at (-4, 0) {$A$};
    \node [right] at (-4, 1) {$A+B$};
    \node [right] at (-5, 2) {$2B$};
    % GENERALIZED CANONICAL NETWORK (n=1)
    % axes
	\draw (-1,0) -- (3.25, 0);
	\draw (0,-.5) -- (0, 2.75);
    % reactions
	\draw [->] (0,1) -- (1, 2);
	\draw [->] (1,1) -- (0, 0.07);
    % labels
    \node [left] at (0, 1) {$B$};
    \node [above] at (1, 2) {$A+2B$};
    \node [right] at (1,1) {$A+B$};
    \node [below] at (0.2, 0) {$0$};
	\end{tikzpicture}
\end{center}
\end{example}

\begin{proposition}[Partial scaling and reaction diagrams for 1-dimensional networks] \label{prop:1-d-operation}
If $G$ is a one-dimensional network 
%with 1-dimensional stoichiometric space 
and 
$G'$ is a network obtained from $G$ by partial scaling (specifically, by scaling species $A_i$ by a factor of $\alpha >0$), 
then the reaction diagram of $G'$ is obtained from the reaction diagram of $G$ by:
\begin{enumerate}
    \item rotating every arrow $y\rightarrow y'$ by the same angle (but centered at $y$), and 
%an angle $\theta$ such that
%\[\cos(\theta)=\frac{1+(k-1)(y_i'-y_i)^2}{\sqrt{1+(k^2-1)(y_i'-y_i)^2}},\]
    \item stretching every arrow $y\rightarrow y'$ by the following factor (which depends on the arrow):
    \[\lambda_{y \to y'} ~=~ \sqrt{1+\frac{(\alpha ^2-1)(y_i'-y_i)^2}{\lVert y'-y\rVert ^2}}.\]
\end{enumerate}
\end{proposition}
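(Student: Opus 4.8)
The plan is to reduce everything to the action of a single linear map on reaction vectors. By Definition~\ref{def:operations}, partial scaling of $A_i$ by $\alpha$ keeps each reactant $y$ fixed and replaces the product $y'$ by $\widetilde y'$; as recorded in the proof of Lemma~\ref{lem:partial-scaling}, the associated reaction vector $v := y'-y$ is thereby sent to $\widetilde v := \widetilde y' - y$, where $\widetilde v_j = v_j$ for $j\neq i$ and $\widetilde v_i = \alpha v_i$. Thus $\widetilde v = T_\alpha(v)$, where $T_\alpha\colon\mathbb{R}^s\to\mathbb{R}^s$ is the linear map that scales the $i$-th coordinate by $\alpha$ and fixes the others. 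Since the source $y$ is unchanged, the arrow $y\to\widetilde y'$ is obtained from $y\to y'$ by replacing its displacement vector $v$ (based at $y$) with $T_\alpha(v)$; so it suffices to show that, for every arrow, passing from $v$ to $T_\alpha(v)$ amounts to a rotation by one fixed angle followed by a stretch by $\lambda_{y\to y'}$.

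Here the one-dimensionality of $G$ is essential. Fix a unit vector $u$ spanning the stoichiometric subspace $S$; then every reaction vector is $v = c\,u$ for some scalar $c=c_{y\to y'}\neq 0$, and by linearity $T_\alpha(v)=c\,T_\alpha(u)$. I would let $R$ be the rotation acting on the plane $P:=\mathrm{span}(u,\,T_\alpha(u))$ (and as the identity on $P^{\perp}$) that sends $u$ to the unit vector $T_\alpha(u)/\lVert T_\alpha(u)\rVert$. For any reaction, $R(v)=c\,R(u)=\tfrac{1}{\lVert T_\alpha(u)\rVert}\,T_\alpha(v)$, a positive scalar multiple of $T_\alpha(v)$, regardless of the sign of $c$. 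Hence the single rotation $R$ carries the direction of every old arrow to the direction of the corresponding new arrow, rotating each by the same angle, namely the angle between $u$ and $T_\alpha(u)$, which proves part~(1). (The hypothesis $\alpha>0$ ensures that $T_\alpha(u)$ retains the sign of the $i$-th coordinate of $u$, so that $R$ is the evident short rotation from $u$ to $T_\alpha(u)$.)

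For part~(2), rotations preserve length, so the stretch factor of the arrow $y\to y'$ is $\lVert T_\alpha(v)\rVert/\lVert v\rVert$. A direct computation gives $\lVert T_\alpha(v)\rVert^{2}=\sum_{j\neq i}v_j^{2}+\alpha^{2}v_i^{2}=\lVert v\rVert^{2}+(\alpha^{2}-1)v_i^{2}$, and substituting $v=y'-y$ (so $v_i=y_i'-y_i$) yields exactly the stated $\lambda_{y\to y'}$. Combining the two steps, each new arrow is the old arrow rotated about its source $y$ through the common angle and then stretched by $\lambda_{y\to y'}$, as claimed.

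The step I expect to require the most care is part~(1): the content is not merely that the magnitude of the rotation angle is constant, but that one and the same rotation $R$ works simultaneously for every arrow, including those whose displacement points along $-u$. This uniformity is exactly what one-dimensionality provides, since all displacement vectors are parallel to $u$ and $T_\alpha$ acts on each as the common map $u\mapsto T_\alpha(u)$ up to sign; it would fail for a general network, where arrows point in differing directions that $T_\alpha$ would rotate by differing angles. I would also note, as a consequence of $v=c\,u$, that $\lambda_{y\to y'}=\sqrt{1+(\alpha^{2}-1)u_i^{2}}$ is in fact the same for every arrow of a one-dimensional network, even though it is written in terms of the individual reaction.
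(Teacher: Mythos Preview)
Your proof is correct and follows essentially the same approach as the paper: both arguments exploit that partial scaling acts linearly on reaction vectors (your map $T_\alpha$; the paper shows directly that $\widetilde{z}'-z=\beta(\widetilde{y}'-y)$ whenever $z'-z=\beta(y'-y)$), and that one-dimensionality forces all reaction vectors onto a single line, so this linear map rotates them uniformly. Your packaging via an explicit global rotation $R$ on $\mathrm{span}(u,T_\alpha(u))$ is slightly cleaner than the paper's pairwise comparison, and your closing observation that $\lambda_{y\to y'}=\sqrt{1+(\alpha^{2}-1)u_i^{2}}$ is in fact the same for every arrow is a nice addendum the paper does not record.
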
 

%{\em As for rotating and scaling (by the same amount) -- not sure whether to include the geometric interpretation (in general).  Maybe need a remark about this? {\color{purple} Do you mean full scaling as opposed to partial scaling?}}

%{\color{red} Actually, there's a problem here -- consider the network $\{0 \to A,~ B \to 2B\}$ -- then scaling species $B$ by $2$ only scales one of the reaction vectors; similarly, scaling $B$ by $-1$ only rotates one of the reaction vectors.  I'm not sure whether there is a nice geometric interpretation for this operation.}) %all arrows in the diagram, and 
 \begin{proof}
 Take two reactions $y \to y'$ and $z \to z'$ in $G$.  
The stoichiometric subspace of $G$ is one-dimensional, so $z'-z = \beta (y' -y)$ for some $\beta \in \mathbb{R} \smallsetminus \{0\}$. This means that 
these two reaction arrows 
point in the same direction (more precisely, up to lengthening an arrow, one can be obtained by the other by translation) if $\beta >0$, and in opposite directions (that is, up to lengthening, one can be obtained from the other by translation and then reversing the direction of the arrow) if $\beta <0$. 

Next, let $y \to \widetilde{y}'$ and $z \to \widetilde{z}'$ denote the corresponding reactions in $G'$.
Let $\theta$ denote the angle (in the plane  $P$ containing the arrows $y \to y'$ and $y \to \widetilde{y}'$) by which $y \to y'$ is rotated to obtain $y \to \widetilde{y}'$.  To show that $z \to z'$ is also rotated by the angle $\theta$ (in the translation of the plane $P$ that contains $z \to z'$), 
we must prove that the arrows of $y \to \widetilde{y}'$ and $z \to \widetilde{z}'$
point in the same direction if $\beta >0$, and in opposite directions if $\beta <0$. 
Thus, 
it suffices to show that $\widetilde{z}'-z = \beta (\widetilde{y}' -y)$.  Indeed, this follows easily from the definition of partial scaling: 
 $(\widetilde{y}' - y)_j = (y'_j-y_j)$ if $j \neq i$ and 
$(\widetilde{y}' - y)_j = \alpha (y'_i-y_i)$ if $j = i$; and analogously for $z, z', \widetilde{z}'$.

Finally, the factor by which a reaction $y \to y'$ is scaled is straightforward to compute: 
    \[\lambda_{y \to y'} ~=~ \frac{\lVert \widetilde y' - y \rVert}{\lVert y' - y \rVert} 
    ~=~ \frac{\lVert  (y' - y)+( \alpha -1)(y'-y)_ie_i \rVert}{\lVert y' - y \rVert} 
    ~=~ \sqrt{1+\frac{(\alpha ^2-1)(y_i'-y_i)^2}{\lVert y'-y\rVert ^2}} ~ ,\]
    where $e_i$ is the vector with 1 in the $i$-th entry and 0 in all other entries. 
 \end{proof}

\begin{remark}
We do not know whether there is a suitable generalization of 
Proposition~\ref{prop:1-d-operation}
beyond one-dimensional networks.
\end{remark}

The next result, which describes what happens to the mass-action ODEs when a reaction is stretched or duplicated, follows easily from definitions.

\begin{lemma}[Reaction operations and ODEs] \label{lem:stretching}
Let $G$ and $G'$ be reaction networks.
\begin{enumerate}
    \item Assume that $G'$ is obtained from $G$ by stretching the reaction $(y_i, y_j)$ by a factor of $\alpha>0$. Then the mass-action ODEs~\eqref{eq:ODE-mass-action} of $G'$ are obtained from those of $G$ by replacing the rate constant $\kappa_{ij}$ by $\alpha \kappa_{ij}$.
    \item Assume that $G'$ is obtained from $G$ by duplicating a reaction 
    $(y_i, y_j)$ by way of reactions $(y_i, y_k)$ and $(y_i, y_{\ell})$ such that 
    $y_k - y_i = \alpha (y_j- y_i)$ and 
    $y_{\ell} - y_i = \beta (y_j- y_i)$, with $\alpha>0$ and $\beta>0$.  
    Then the mass-action ODEs~\eqref{eq:ODE-mass-action} of $G'$ are obtained from those of $G$ by
    replacing the rate constant $\kappa_{ij}$ by $\alpha \kappa_{ik} + \beta \kappa_{i \ell} $.
\end{enumerate}
\end{lemma}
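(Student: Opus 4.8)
The plan is to read off both statements directly from the summation form of the mass-action ODEs~\eqref{eq:ODE-mass-action}, exploiting the fact that each operation fixes the \emph{reactant} complex of the reaction being modified. Recall that $f_\kappa(\mathbf{x}) = \sum_{y_i \to y_j \in \RR} \kappa_{ij}\, \mathbf{x}^{y_i}(y_j - y_i)$, so the contribution of any single reaction is the product of its rate constant, the monomial $\mathbf{x}^{y_i}$ determined by its reactant, and its reaction vector $y_j - y_i$. Crucially, stretching and duplicating both leave the reactant untouched and alter only the product complex; consequently every reaction other than the one being modified contributes an identical term to the ODEs of $G$ and of $G'$. Thus in each case it suffices to compare the single term (or pair of terms) arising from the affected reaction, and the rest of the sum cancels out of the comparison.

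For part~1, I would isolate the $(y_i,y_j)$-term $\kappa_{ij}\mathbf{x}^{y_i}(y_j-y_i)$ of $f_\kappa$ and compare it with the term that $G'$ contributes from the stretched reaction $(y_i,\widetilde y_j)$. By the definition of stretching, $\widetilde y_j - y_i = \alpha(y_j - y_i)$ with $\alpha > 0$, and the reactant is still $y_i$, so (writing the rate constant of the stretched reaction again as $\kappa_{ij}$) that term equals $\kappa_{ij}\mathbf{x}^{y_i}\,\alpha(y_j-y_i) = (\alpha\kappa_{ij})\,\mathbf{x}^{y_i}(y_j-y_i)$. This is precisely the $(y_i,y_j)$-term of $G$ with $\kappa_{ij}$ replaced by $\alpha\kappa_{ij}$, which, together with the agreement of all other terms, yields the claim.

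Part~2 proceeds identically, now summing the two terms produced by the duplicated reactions $(y_i,y_k)$ and $(y_i,y_\ell)$. Both share the reactant $y_i$, hence both carry the monomial $\mathbf{x}^{y_i}$; substituting $y_k - y_i = \alpha(y_j-y_i)$ and $y_\ell - y_i = \beta(y_j-y_i)$ and factoring out $\mathbf{x}^{y_i}(y_j-y_i)$ collapses the pair into $(\alpha\kappa_{ik}+\beta\kappa_{i\ell})\,\mathbf{x}^{y_i}(y_j-y_i)$, again matching the $(y_i,y_j)$-term of $G$ with $\kappa_{ij}$ replaced by $\alpha\kappa_{ik}+\beta\kappa_{i\ell}$.

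There is no genuine obstacle here: the result is bookkeeping. The only point requiring care is the observation that makes the terms combine at all, namely that both operations preserve the reactant, so the two duplicated terms (and the single stretched term) share a common monomial and the common direction $y_j - y_i$, allowing the reaction vectors to be pulled out as scalars. I would also note explicitly that duplication requires $(y_i,y_k),(y_i,y_\ell)\notin\RR$, so that no pre-existing term is inadvertently merged, and that the hypotheses $\alpha,\beta>0$ guarantee the new reaction vectors genuinely point in the direction of $y_j-y_i$, so that the expressions describe honest reactions rather than their reverses.
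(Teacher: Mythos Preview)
Your proposal is correct and is exactly the direct unpacking of definitions that the paper has in mind; the paper itself simply states that the lemma ``follows easily from definitions'' without writing out the computation. Your careful observation that both operations fix the reactant complex (so the monomial $\mathbf{x}^{y_i}$ and the common direction $y_j-y_i$ can be factored out) is precisely the content of that one-line justification.
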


We saw above that partial scaling preserves the dimension of the stoichiometric subspace (Lemma~\ref{lem:partial-scaling}).  It is straightforward to check that the other four network operations
in Definition~\ref{def:operations} also have this property, so we obtain the following result.

\begin{proposition}[Operations preserve dimension] \label{prop:4-opers-preserve-dim}
Let $G$ and $G'$ be reaction networks such that $G'$ is obtained from $G$ by performing one or more of the operations from Definition~\ref{def:operations}. 
Then the stoichiometric subspace of $G$ and the stoichiometric subspace of $G'$ have the same dimension.
\end{proposition}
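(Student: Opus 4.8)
The plan is to reduce the general statement to the single-operation case and then verify the rank-preservation claim separately for each of the five operations, treating partial scaling as already settled by Lemma~\ref{lem:partial-scaling}. First I would observe that since each operation takes a network to a network with the same species set $\SS$, and since the stoichiometric subspace is $\St = \im(\Gamma)$ with $\dim \St = \operatorname{rank}(\Gamma)$, it suffices to track how each operation transforms the stoichiometric matrix $\Gamma$ and to check that the rank is unchanged. Because the operations commute and compose (Remark~\ref{rem:reverse-operation}), proving the claim for a single application of each operation immediately yields the claim for any finite sequence, by induction on the number of operations applied.

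Next I would handle the five operations one at a time. For \emph{relabeling species} by a permutation $\sigma$, the matrix $\Gamma'$ is obtained from $\Gamma$ by permuting its rows according to $\sigma$; a row permutation is multiplication by a permutation matrix, which is invertible, so $\operatorname{rank}(\Gamma') = \operatorname{rank}(\Gamma)$. For \emph{translation} by $\widetilde z$, each reaction vector is unchanged, since $(y'+\widetilde z)-(y+\widetilde z) = y'-y$; hence $\Gamma' = \Gamma$ and the dimension is trivially preserved. For \emph{partial scaling}, the claim is exactly part~2 of Lemma~\ref{lem:partial-scaling}, which I would simply cite.

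The two reaction operations require slightly more care because they change the \emph{set} of columns of $\Gamma$, but in each case the column space $\St = \im(\Gamma)$ is unchanged. For \emph{stretching a reaction}, the column $y'-y$ is replaced by $\widetilde y'-y$, which by definition is a positive-scalar multiple of $y'-y$; scaling a single column by a nonzero scalar leaves the column span (and hence the rank) unchanged. For \emph{duplicating a reaction}, the single column $y'-y$ is replaced by two columns $w-y$ and $z-y$, each a positive-scalar multiple of $y'-y$; the span of all columns is therefore unchanged, since the new columns lie in the line spanned by the removed column and at least one of them is nonzero. In all five cases $\St$ (or at least its dimension) is preserved, completing the argument.

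I do not expect any genuine obstacle here, as the statement is essentially bookkeeping about how elementary column/row operations and column substitutions affect matrix rank. The only point that warrants a word of attention is the composition step: one must ensure that at each intermediate stage a valid network is produced (recall Remark~\ref{rem:negate}, which notes that stretching could in principle push an arrow out of $\mathbb{R}^s_{\geq 0}$), but under the hypothesis that $G'$ is actually obtained from $G$ by the listed operations, each intermediate network is by assumption well-defined, so the induction goes through without incident.
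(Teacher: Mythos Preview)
Your proof is correct and follows essentially the same approach as the paper: the paper simply cites Lemma~\ref{lem:partial-scaling} for partial scaling and declares the remaining four operations ``straightforward to check,'' while you actually carry out those straightforward checks by tracking the effect of each operation on the columns (or rows) of~$\Gamma$. There is no substantive difference in strategy---you have just filled in the details the paper leaves to the reader.
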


\subsection{Operations and deficiency} \label{sec:operations-deficiency}

Certain network operations preserve the deficiency:

\begin{prop}[Operations and deficiency] \label{prop:operations-preserve-deficiency}
If $G'$ is obtained from a reaction network~$G$ by translation or relabeling species, then $G$ and $G'$ have the same deficiency. 
\end{prop}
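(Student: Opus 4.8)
The plan is to show that both translation and species relabeling leave each of the three quantities in the deficiency formula $\delta = p - \ell - \dim(S)$ unchanged, and conclude that $\delta$ itself is preserved. Recall that $p$ is the number of complexes, $\ell$ the number of linkage classes, and $\dim(S)$ the dimension of the stoichiometric subspace. Since Proposition~\ref{prop:4-opers-preserve-dim} already establishes that all five operations preserve $\dim(S)$, the remaining work is to verify that translation and relabeling each preserve $p$ and $\ell$.

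First I would treat translation by $\widetilde z$. The key observation is that the map $y \mapsto y + \widetilde z$ is a bijection from $\CC$ to $\CC'$: it is injective because translation by a fixed vector is injective, and distinct complexes of $G$ remain distinct after adding the same vector. Hence $p$ is unchanged. For the linkage classes, I would note that this same bijection carries reactions to reactions, namely $(y,y') \mapsto (y+\widetilde z, y'+\widetilde z)$, and therefore induces an isomorphism of reaction graphs $\mathcal{G} \to \mathcal{G}'$. An isomorphism of directed graphs preserves connected components, so $\ell$ is unchanged. Combining with Proposition~\ref{prop:4-opers-preserve-dim} gives $\delta(G) = \delta(G')$.

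Next I would treat relabeling by a permutation $\sigma$. The argument is structurally identical: the permutation $\sigma$ acts on $\mathbb{R}^s$ by permuting coordinates, and by Definition~\ref{def:operations}(1) it sends each complex $\sum_j y_j A_j$ to $\sum_j y_{\sigma(j)} A_{\sigma(j)}$, giving a bijection $\CC \to \CC'$, so $p$ is preserved. The same correspondence sends reactions to reactions and thus yields a reaction-graph isomorphism, preserving $\ell$. Again invoking Proposition~\ref{prop:4-opers-preserve-dim} for $\dim(S)$ completes the case.

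I do not anticipate a genuine obstacle here, since the result reduces entirely to the observation that translation and relabeling are reaction-graph isomorphisms (they are bijections on complexes that respect the reaction relation), and graph isomorphisms trivially preserve vertex count and component count. The only point requiring a small amount of care is confirming that the operations really do produce \emph{distinct} complexes from distinct ones — that is, that the bijectivity claim is correct and no two complexes are accidentally identified — but this is immediate because both maps (translation and coordinate permutation) are injective linear-affine maps on $\mathbb{R}^s$. Thus the proof is a short verification rather than a substantive argument.
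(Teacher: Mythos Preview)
Your proposal is correct and follows essentially the same approach as the paper: the paper's proof simply asserts that translation and relabeling preserve the number of complexes and linkage classes, invokes Proposition~\ref{prop:4-opers-preserve-dim} for $\dim(S)$, and concludes via the deficiency formula. Your version is a more detailed fleshing-out of exactly this argument, making explicit the reaction-graph isomorphism that the paper leaves implicit.
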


\begin{proof}
Translation and relabeling species do not affect the number of complexes or the number of linkage classes.  The dimension of the stoichiometric subspace is also unchanged (Proposition~\ref{prop:4-opers-preserve-dim}).  The result now follows from the definition of deficiency.
%Recall that the deficiency is the number of complexes minus the number of connected components minus the dimension of the span of the reaction vectors.  For translation, the number of complexes is preserved, though the complexes themselves change, the number of connected components stays the same since the reactions are the same up to a renaming of complexes, and the span of the reaction vectors is unaffected.  For relabeling species, clearly deficiency is preserved.  
\end{proof}

The remaining three operations from Definition~\ref{def:operations} do not, in general, preserve deficiency; see the following three examples.
Nevertheless, we will see that for the generalized Shinar-Feinberg network and certain other networks, 
the operations of partial scaling and stretching a reaction do preserve deficiency (Proposition~\ref{prop:canonical-deficiency}).

 \begin{example}[Stretching a reaction does not preserve deficiency] \label{ex:stretch-def}
Starting from the network $\{0 \to A,~ A \to B,~ B \to 0 \}$, 
we obtain the network
$\{0 \to 2A,~ A \to B, ~ B \to 0 \}$ by 
stretching the reaction $0 \to A$ by a factor of 2.  The first network has deficiency $3-1-2=0$, while the second network 
has deficiency $4-1-2=1$.
\end{example}
 
 \begin{example}[Duplicating a reaction does not preserve deficiency] \label{ex:duplicate-def} 
From the network $\{0 \to A\}$, 
we obtain $\{0 \to A, ~ 0 \to 2A\}$
by duplicating a reaction.  The first network has deficiency $2-1-1=0$, while the second network has deficiency $3-1-1=1$.
\end{example}

\begin{example}[Partial scaling does not preserve deficiency] \label{ex:partial-scale-def}
The networks $\{0 \lra 2A\}$ and $\{0 \to A \leftarrow 2A\}$ are obtained from each other by partial scaling of both reactions, but their deficiencies are not equal: the first has deficiency $2-1-1=0$ and the second has deficiency $3-1-1=1$. \end{example}

The following lemma reveals one case in which deficiency is preserved by network operations, involving networks with only two reactions.

\begin{lemma} \label{lem:deficiency-2-rxns}
Let $G$ be a reaction network with exactly two reactions, $y\to y'$ and $z \to z' $, such that
the reactant complexes are distinct ($y \neq z$) and there exists a species $A_i$ such that $y'_i \neq y_i = z_i \neq z_i'$.  If $G'$ is obtained from $G$ by performing one or more operations from Definition~\ref{def:operations}, except duplicating a reaction, then $G$ and $G'$ have the same deficiency.
\end{lemma}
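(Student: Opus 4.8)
The plan is to reduce the entire statement to a single invariant computation. Since the deficiency is $\delta = p - \ell - \dim(S)$ (Definition~\ref{def:deficiency}) and all five operations preserve $\dim(S)$ (Proposition~\ref{prop:4-opers-preserve-dim}), it suffices to show that the quantity $p - \ell$ is unchanged. Rather than track $p$ and $\ell$ through the operations, I would prove the cleaner claim that \emph{every} two-reaction network satisfying the hypothesis of the lemma has $p - \ell = 2$, and then verify that the hypothesis itself is preserved by each allowed operation.

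First I would establish the claim $p - \ell = 2$. Write the two reactions as $y \to y'$ and $z \to z'$, so the complexes are the distinct elements among $\{y, y', z, z'\}$. Non-triviality of the reactions gives $y \neq y'$ and $z \neq z'$, and the hypothesis gives $y \neq z$. The key observation is that the witness species $A_i$ rules out two further coincidences: since $y'_i \neq y_i = z_i$ we get $y' \neq z$, and since $z'_i \neq z_i = y_i$ we get $z' \neq y$. Hence the only pair of complexes whose equality is not already forbidden is $(y', z')$, and I would split into two cases. If $y' = z'$, then there are exactly three complexes and the reaction graph $y \to y' = z' \leftarrow z$ is connected, so $p - \ell = 3 - 1 = 2$. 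If $y' \neq z'$, then all four complexes are distinct and the two reactions are vertex-disjoint edges, so $p - \ell = 4 - 2 = 2$. Either way $p - \ell = 2$.

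Next I would check that each permitted operation carries a network satisfying the hypothesis to another such network; by induction this then holds after any finite sequence of them (duplicating a reaction, which would change the number of reactions, is excluded). Relabeling species permutes coordinates, so the witness survives with a relabeled index; translation by $\widetilde z$ leaves the differences $y'_i - y_i$, $z'_i - z_i$ and the equality $y_i = z_i$ untouched and preserves $y \neq z$. Stretching a reaction replaces a product $y'$ by $\widetilde y'$ with $\widetilde y' - y = \alpha(y' - y)$ for some $\alpha > 0$, so $\widetilde y'_i - y_i = \alpha(y'_i - y_i) \neq 0$, while the reactants and the other reaction are unchanged. The only delicate case is partial scaling of the very witness species $A_i$ by some $\alpha \neq 0$: here $\widetilde y'_i = y_i + \alpha(y'_i - y_i)$, and because $\alpha \neq 0$ and $y'_i \neq y_i$ we again have $\widetilde y'_i \neq y_i$, and similarly $\widetilde z'_i \neq z_i$; scaling any other species leaves the $i$-th coordinates fixed. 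In every case the reactants are untouched, so $y \neq z$ and $y_i = z_i$ persist, and the hypothesis is maintained.

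Finally I would assemble the pieces: $G$ satisfies the hypothesis, so $\delta_G = 2 - \dim(S_G)$; since the allowed operations preserve the hypothesis, $G'$ also satisfies it, giving $\delta_{G'} = 2 - \dim(S_{G'})$; and $\dim(S_G) = \dim(S_{G'})$ by Proposition~\ref{prop:4-opers-preserve-dim}. Hence $\delta_G = \delta_{G'}$. The main obstacle is the bookkeeping in the second step, namely ensuring that no operation can create or destroy a coincidence among the complexes; the crux there is precisely that every allowed operation keeps the $i$-th coordinate of each product strictly away from the common reactant value $y_i = z_i$, which is what pins $p - \ell$ at $2$.
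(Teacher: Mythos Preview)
Your proposal is correct and follows essentially the same approach as the paper: both arguments verify that the three defining properties (two reactions, distinct reactants, witness species $A_i$) are preserved by the allowed operations, compute $p-\ell=2$ via the 3-complex/4-complex case split, and then invoke Proposition~\ref{prop:4-opers-preserve-dim} to finish. Your treatment is slightly more explicit in ruling out the coincidences $y'=z$ and $z'=y$ using the witness coordinate, whereas the paper simply asserts that $G$ and $G'$ have 3 or 4 complexes.
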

\begin{proof}
%Let ``Property ($\star$)''  
The properties satisfied by $G$ are that  (1) there are exactly two reactions, (2)~the reactant complexes are distinct  ($y \neq z$), and (3) there exists a species $A_i$ such that $y'_i \neq y_i = z_i \neq z_i'$. 
%It is straightforward to check that all three properties are preserved when one or more network operations, except duplicating a reaction, are performed.
It is straightforward to check that
the first two properties are preserved when one or more network operations, except duplicating a reaction, are performed. 
Now consider the third property. This property is preserved under relabeling species, translation, and stretching a reaction.
The last operation to consider is partial scaling.  
By definition, partial scaling some species $A_{\ell}$ (by a nonzero $\alpha$) replaces the reaction $y \to y'$ by 
 $y \to \widetilde y'$, where $\widetilde y'_j:=y'_j$ if $j \neq \ell$ and $\widetilde y'_j:=y_{\ell} +\alpha (y'_{\ell}-y_{\ell})$ if $j=\ell$.  The reaction $z \to z'$ is similarly transformed. 
 Hence, if $y'_i \neq y_i = z_i \neq z_i'$, then 
 $\widetilde y'_i \neq \widetilde y_i = \widetilde z_i \neq \widetilde z_i'$. In other words, 
 the third property still holds after partial scaling is performed.

%{\color{violet} Proposed change: The third property can only be violated if the network operations result in a network where the product complexes and the reactant complexes fall into the same line in the reaction diagram. Since the reactant complexes have $y_i=z_i$ for species $A_i$, this means the reaction vectors must have a zero in the $i^{th}$ component, which means we must perform a partial scaling by zero. Since this is not allowed, all three properties are preserved.}

We conclude that $G$ has 3 or 4 complexes, and $G'$ too has 3 or 4 complexes.  
A 2-reaction network $N$ with 3 complexes has only 1 linkage class, and so the deficiency of $N$ is $3-1-\dim(N)=2-\dim(N)$, where $\dim(N)$ denotes the dimension of the stoichiometric subspace of $N$.  
Similarly, a 2-reaction network $N$ with 4 complexes has 2 linkage classes, and so its deficiency is $4-2- \dim(N)= 2- \dim(N)$, which is the same as in the 3-complex case.  Now the result follows from the fact that $\dim(G')=\dim(G)$ (by Lemma~\ref{prop:4-opers-preserve-dim}).
\end{proof}

Lemma~\ref{lem:deficiency-2-rxns} implies the following result, which we will use in a later section.

\begin{proposition}[Operations preserve deficiency of Shinar--Feinberg network] \label{prop:canonical-deficiency}
Let $G'$ be a reaction network obtained from a generalized Shinar--Feinberg network
$G=\{ B \to A, ~ nA+B \to (n-1)A+2B \}$ by performing one or more operations from Definition \ref{def:operations}, except duplicating a reaction.  Then $G$ and $G'$ have the same deficiency.
%Deficiency is preserved under the network operations in Definition \ref{def:operations} on the generalized canonical network.
\end{proposition}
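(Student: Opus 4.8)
The plan is to reduce everything to Lemma~\ref{lem:deficiency-2-rxns}, which already does the heavy lifting: it guarantees that the relevant operations preserve deficiency for \emph{any} two-reaction network whose reactant complexes are distinct and which possesses a species $A_i$ satisfying the strict inequality pattern $y'_i \neq y_i = z_i \neq z'_i$. So the entire argument amounts to checking that the generalized Shinar--Feinberg network $G = \{B \to A,~ nA+B \to (n-1)A+2B\}$ meets these three hypotheses, after which the conclusion is immediate.

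First I would fix coordinates, writing each complex as a pair $(\#A, \#B)$, so that the two reactions are $y \to y'$ with $y = (0,1)$ and $y' = (1,0)$, and $z \to z'$ with $z = (n,1)$ and $z' = (n-1,2)$. The first hypothesis (exactly two reactions) is immediate from the definition of $G$. The second hypothesis (distinct reactant complexes) follows since $y = (0,1) \neq (n,1) = z$, using $n \geq 1$.

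The only step requiring genuine care is the third hypothesis, and here the main subtlety is selecting the correct species. One might be tempted to use the species $A$ in which the two non-terminal complexes differ (as in the Shinar--Feinberg criterion), but $A$ fails: its reactant coefficients are $y_A = 0$ and $z_A = n$, which are unequal once $n \geq 1$, violating the required equality $y_i = z_i$. Instead I would verify the pattern for species $B$, where $y_B = 1$, $y'_B = 0$, $z_B = 1$, and $z'_B = 2$, so that $y'_B = 0 \neq 1 = y_B = z_B \neq 2 = z'_B$, exactly the condition demanded by the lemma with $A_i = B$.

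Having confirmed all three hypotheses with $A_i = B$, the proposition follows directly by invoking Lemma~\ref{lem:deficiency-2-rxns}. I do not expect any real obstacle beyond this bookkeeping; the substantive content -- tracking how the number of complexes (either $3$ or $4$) and linkage classes behave under the permitted operations, and observing that the deficiency formula $2 - \dim(N)$ is insensitive to which of these two cases occurs -- was already discharged in the proof of that lemma.
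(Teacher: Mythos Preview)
Your proposal is correct and follows exactly the paper's approach: the paper simply states that the proposition follows from Lemma~\ref{lem:deficiency-2-rxns}, and you have carefully verified the three hypotheses of that lemma for the generalized Shinar--Feinberg network (with $A_i = B$), which is all that is needed.
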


\subsection{Operations and ACR} \label{sec:operations-ACR}

Some network operations preserve the steady-state locus: 

\begin{prop}[Operations and the steady-state locus] \label{prop:operations-preserve}
Assume that $G'$ is obtained from a reaction network $G$ by one of the operations in  Definition~\ref{def:operations}.  Let $\kappa^*$ be a vector of positive rate constants for $G$.
\begin{enumerate}
    \item If $G'$ is obtained from $G$ by translation or partial scaling, then the mass-action systems $(G, \kappa^*)$ and $(G', \kappa^*)$ have the same steady-state locus.
    \item If $G'$ is obtained from $G$ by relabeling species, 
    then the steady-state locus of $(G', \kappa^*)$ is obtained from that of $(G, \kappa^*)$ by performing reflections across some hyperplanes of the form $x_i=x_j$.
\end{enumerate}

\end{prop}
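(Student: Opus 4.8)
The plan is to prove each part by directly comparing the mass-action vector fields of $(G,\kappa^*)$ and $(G',\kappa^*)$, using the form $\dot x = \Gamma v(x)$ recalled in the proof of Lemma~\ref{lem:partial-scaling}, where $v(x)$ is the column vector of reactant monomials. Since none of the three operations in the statement (translation, partial scaling, relabeling species) alters the set of reactions beyond a bijection that preserves reactant complexes, the rate-constant vector $\kappa^*$ transfers verbatim, so comparing the two systems at the same $\kappa^*$ is legitimate. I would treat translation and partial scaling under Part~1, and relabeling under Part~2.

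For translation by $\widetilde z = z_1 A_1 + \cdots + z_s A_s$, I would first observe that each reaction $(y,y')$ of $G$ becomes $(y+\widetilde z,\, y'+\widetilde z)$ in $G'$, so the reaction vector is unchanged, $(y'+\widetilde z)-(y+\widetilde z)=y'-y$, while the reactant monomial is multiplied by the common factor $\mathbf{x}^{\widetilde z}$. Summing over all reactions yields the identity $f^{G'}_{\kappa^*}(\mathbf{x}) = \mathbf{x}^{\widetilde z}\, f^{G}_{\kappa^*}(\mathbf{x})$. On the positive orthant $\mathbb{R}^s_{>0}$ the scalar $\mathbf{x}^{\widetilde z}$ is strictly positive, so $f^{G'}_{\kappa^*}(\mathbf{x})=0$ exactly when $f^{G}_{\kappa^*}(\mathbf{x})=0$; hence the two positive steady-state loci coincide. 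For partial scaling of a species $A_i$ by $\alpha \in \mathbb{R}\smallsetminus\{0\}$ I would invoke Lemma~\ref{lem:partial-scaling}(1): the only change to the ODEs is that $\dot x_i = f_i$ is replaced by $\dot x_i = \alpha f_i$. Since $\alpha f_i = 0$ if and only if $f_i=0$, the full zero set of the vector field is unchanged, which gives the claim (here even the entire nonnegative locus is preserved, not just the positive part).

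For relabeling species by a permutation $\sigma$, I would introduce the coordinate-permutation matrix $P$ determined by $(P\mathbf{x})_{\sigma(j)}=\mathbf{x}_j$, so that each complex $y$ of $G$ corresponds to $Py$ in $G'$ and each reaction $(y,y')$ to $(Py,Py')$. The key computation is the conjugation identity $f^{G'}_{\kappa^*}(\mathbf{x}) = P\, f^{G}_{\kappa^*}(P^{-1}\mathbf{x})$, which follows by checking that the reactant monomial satisfies $\mathbf{x}^{Py} = (P^{-1}\mathbf{x})^{y}$ and that the reaction vectors transform as $Py'-Py = P(y'-y)$. Since $P$ is invertible, $\mathbf{x}^*$ is a steady state of $(G',\kappa^*)$ if and only if $P^{-1}\mathbf{x}^*$ is a steady state of $(G,\kappa^*)$; that is, the steady-state locus of $G'$ is the image under $P$ of that of $G$. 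Decomposing $\sigma$ into transpositions writes $P$ as a composition of reflections across hyperplanes $x_i=x_j$, matching the geometric description in Lemma~\ref{lem:translate}(1), which is precisely the asserted form.

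The two monomial-factorization identities are routine. The step most likely to require care is the index bookkeeping in the relabeling case -- in particular verifying $\mathbf{x}^{Py}=(P^{-1}\mathbf{x})^{y}$ and pinning down whether the locus is carried by $P$ or by $P^{-1}$ -- together with being explicit that for translation only the \emph{positive} steady-state locus is preserved, since $\mathbf{x}^{\widetilde z}$ can vanish on the coordinate hyperplanes and so may create spurious boundary steady states for $G'$.
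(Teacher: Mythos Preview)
Your proposal is correct and follows essentially the same approach as the paper's own proof: translation multiplies the right-hand side by a Laurent monomial (so positive zeros are unchanged), partial scaling multiplies one component by a nonzero scalar via Lemma~\ref{lem:partial-scaling}, and relabeling permutes coordinates. The paper states these three observations in a single terse paragraph, whereas you spell out the computations (the factorization $f^{G'}_{\kappa^*}=\mathbf{x}^{\widetilde z}f^{G}_{\kappa^*}$ and the conjugation identity $f^{G'}_{\kappa^*}(\mathbf{x})=P\,f^{G}_{\kappa^*}(P^{-1}\mathbf{x})$) explicitly; your added care about the positive versus boundary locus under translation is a valid refinement of the paper's statement.
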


\begin{proof} Translation simply multiplies every equation on the right-hand side of the ODEs~\eqref{eq:ODE-mass-action} by a Laurent monomial, which %only changes the multiplicity of zero steady states and thus preserves all nonzero 
does not affect the positive steady states. 
Next, partial scaling does not change the positive steady states, as the nonzero constant can be eliminated when solving for steady states (recall Lemma~\ref{lem:partial-scaling}).  Relabeling species (permuting variables) only permutes the axes when considering the steady-state locus, thus the steady states are preserved up to a renaming of variables. 
\end{proof}

We can now prove that all the network operations in Definition~\ref{def:operations} preserve ACR.
\begin{proof}[Proof of Theorem~\ref{thm:operations-ACR}]  
In this proof, we denote a reaction by $(y_i, y_j)$ rather than $(y,y')$.

For translation, partial scaling, and relabeling species, this result follows from Proposition~\ref{prop:operations-preserve}.  
Now we consider the remaining operations.  Accordingly, assume that $G'$ is obtained from $G$ by  stretching or duplicating a reaction.  

First, we consider the case of stretching: assume that a reaction $(y_{i^*}, y_{j^*})$ is stretched by a factor of $\alpha > 0$.  Recall that this stretching operation can be reversed by another stretching operation (Remark~\ref{rem:reverse-operation}), so it suffices to show that if $G$ has ACR in species $A_{p}$, then so does $G'$.  By a straightforward application of Lemma~\ref{lem:stretching}, we know that 
$x^* \in \mathbb{R}^s_{>0}$ is a positive steady state of 
$(G, \kappa^*)$ if and only if 
$x^* \in \mathbb{R}^s_{>0}$ is a positive steady state of 
$(G', \widetilde{\kappa}^*)$, where $\widetilde{\kappa}^*_{ij}$ is 
defined to be $\frac{1}{\alpha} \kappa^*_{i^* j^*}$
if $(i,j)=(i^*,j^*)$, and equals
$\kappa^*_{i j}$
otherwise.
Now it follows from the definition of ACR that 
$G'$ also has ACR in species $A_{p}$ if $G$ does.

Next, we consider the case of duplicating a reaction.  By Lemma~\ref{lem:stretching}, the ODEs of $G'$ are obtained from those of $G$ by replacing some $\kappa_{i^* j^*}$ by some 
$\alpha \kappa_{i^* k} + \beta \kappa_{i^* \ell}$ (with $\alpha>0$ and $\beta>0$).  
Thus, 
$x^* \in \mathbb{R}^s_{>0}$ is a positive steady state of 
$(G, \kappa^*)$ if and only if 
$x^* \in \mathbb{R}^s_{>0}$ is a positive steady state of 
$(G', \widetilde{\kappa}^*)$ for every choice of $\widetilde{\kappa}^*$ for which 
(i)
${\kappa}^*_{i^* j^*}   = \alpha \widetilde{\kappa}_{i^* k} + \beta \widetilde{\kappa}_{i^* \ell} $, 
and (ii) if $(i,j) \notin \{ (i^*,k), (i^*, \ell) \}$, then $\widetilde{\kappa}^*_{ij} = \kappa^*_{i j}$
It is now straightforward to conclude that $G$ has ACR in some species if and only if $G'$ does too.
\end{proof}

%{\color{purple} Question: Regarding all of our ACR results below, where do we want to say that we are only considering networks without restrictions on the rate constants and all the results below specifically deal with that case? I ADDED A SENTENCE IN THE BACKGROUND SECTION.  Perhaps provide examples of networks with restrictions on rate constants that also have ACR but are not completely characterized as below? e.g. example from Mercedes's thesis?  DONE.}

\section{One-species networks} \label{sec:1-species}
A network with only one species has ACR if and only if it is non-multistationary; such networks have already been classified~\cite{JS}.  
We can therefore characterize one-species networks with ACR (Proposition~\ref{prop:1-species}) and stable ACR (Proposition~\ref{prop:1-species-stable}).

To state Proposition~\ref{prop:1-species}, we need the following definition from~\cite{JS}:

%----------------------------------
% DEF: T-alternating -- now 2-alternating only
%----------------------------------
\begin{definition} \label{def:2-sign-change}
A one-species network is {\em 2-alternating} if it has exactly three reactions and the arrow diagram is either $(\to, \leftarrow, \to)$ or $(\leftarrow, \to, \leftarrow )$.
%For positive integers $T \geq 1$, a {\em $T$-alternating network} is a one-species network with 
%exactly $T+1$ reactions and with arrow diagram $\rho \in \{\to , \la\}^{T+1}$ such that $\rho_i = \to$ if and only if $\rho_{i+1} = \la$ for all $i \in \{1,2, \ldots, T\}$. 
\end{definition}

\begin{proposition}[ACR in one-species networks] \label{prop:1-species}
 For a one-species network $G$ with at least one reaction, the following are equivalent:
 \begin{enumerate}
 \item $G$ has ACR,
 \item $G$ is non-multistationary, and 
 %monostationary (i.e., $G$ is not multistationary), and
 \item $G$ does not have a 2-alternating subnetwork, and 
	 the arrow diagram of $G$ does not have the form 
	 $(\lradot, \dots, \lradot)$. 
\end{enumerate}
\end{proposition}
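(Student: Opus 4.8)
The plan is to reduce everything to the single species that actually appears in the reactions, together with a one-variable steady-state polynomial, and then to split the chain of equivalences into an easy analytic part (1)$\Leftrightarrow$(2) and a combinatorial part (2)$\Leftrightarrow$(3) that is essentially the classification of~\cite{JS}.

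First I would fix the unique species $A$ occurring in the reactions (the remaining species, if any, are catalyst-only by Definition~\ref{def:1-d-1-species}), write $x=x_A$, and record the scalar steady-state polynomial
\[
g_{\kappa}(x) ~=~ \sum_{a} c_a(\kappa)\, x^a, \qquad c_a(\kappa) ~:=~ \sum_{aA \to bA \,\in\, \RR} \kappa_{ab}(b-a)~,
\]
where the outer sum runs over the distinct reactant coefficients $a_1 < \dots < a_m$. Since every reaction vector is a multiple of the unit vector in the $A$-direction, the stoichiometric subspace is one-dimensional, each stoichiometric compatibility class is a ray on which only $x_A$ varies, and the positive steady states of $(G,\kappa)$ are exactly the points whose $A$-coordinate is a positive root of $g_\kappa$ and whose remaining coordinates are arbitrary positive numbers. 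Two consequences follow at once: a catalyst-only species takes \emph{every} positive value among the positive steady states (so it can never carry non-vacuous ACR), and hence $G$ has ACR $\Leftrightarrow$ $G$ has ACR in $A$ $\Leftrightarrow$ for every $\kappa$ the polynomial $g_\kappa$ has at most one positive root. Because there is a single compatibility class for each choice of frozen coordinates, non-multistationarity asserts exactly the same thing, which proves (1)$\Leftrightarrow$(2).

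For (2)$\Leftrightarrow$(3) I would invoke the classification of multistationary one-species networks in~\cite{JS}, translated through the sign of $c_a(\kappa)$: an entry $\to$ (resp.\ $\la$) of the arrow diagram forces $c_a>0$ (resp.\ $c_a<0$) for all $\kappa$, while an entry $\lradot$ lets $c_a$ realize any sign, including $0$. Crucially, at a $\lradot$ level one may select individual up- or down-reactions, so a $2$-alternating \emph{subnetwork} (three reactions with arrow pattern $(\to,\la,\to)$ or $(\la,\to,\la)$) is available precisely when an achievable coefficient-sign pattern $+,-,+$ or $-,+,-$ on three exponents exists; by Descartes' rule of signs together with the realizability that makes it an atom of multistationarity, this yields two positive roots for suitable $\kappa$. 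An all-$\lradot$ diagram, on the other hand, lets one independently zero out every $c_a$, collapsing $g_\kappa$ to the zero polynomial and producing a continuum of positive steady states. Conversely, when neither pattern occurs, a case analysis of the achievable sign sequences of $(c_{a_1},\dots,c_{a_m})$ shows at most one sign change, hence at most one positive root for every $\kappa$. Taking contrapositives turns the \cite{JS} equivalence ``multistationary $\Leftrightarrow$ contains a $2$-alternating subnetwork or is all-$\lradot$'' into exactly (2)$\Leftrightarrow$(3).

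The main obstacle is the combinatorial heart, i.e.\ making both implications of (2)$\Leftrightarrow$(3) watertight rather than merely plausible. The direction asserting that the excluded patterns force at most one sign change (hence at most one positive root) requires a genuine case analysis of which sign sequences remain achievable once some entries are sign-locked by $\to/\la$ and others are freed by $\lradot$, and it must separately rule out the degenerate vanishing of $g_\kappa$. The reverse direction relies on the lifting principle that a $2$-alternating subnetwork forces multistationarity of the \emph{whole} network, not just of the subnetwork, which is the content of the atoms-of-multistationarity machinery of~\cite{JS}; I would quote this rather than reprove it. A minor but genuine subtlety to flag throughout is that ``multistationarity'' must be read to include the continuum case, so that the all-$\lradot$ degenerate networks are correctly counted as violating both (2) and (3).
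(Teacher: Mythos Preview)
Your approach is essentially the same as the paper's, which is simply to observe that (1)$\Leftrightarrow$(2) is immediate from the definitions (your polynomial $g_\kappa$ makes this explicit) and that (2)$\Leftrightarrow$(3) is the content of~\cite[Theorem~3.6, part~2]{JS}. The paper's proof is two sentences; your extra material (spelling out $g_\kappa$, the reduction to the single active species, the Descartes/sign-pattern sketch) is expository elaboration of what the cited result already gives, not a different route.
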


\begin{proof}
This result follows directly from the definitions of ACR and multistationary, and from~\cite[Theorem 3.6, part 2]{JS}, 
which implies that a one-species network $G$ (with at least one reaction) is multistationary if and only if 
$G$ has a 2-alternating subnetwork or 
	 the arrow diagram of $G$ has the form $(\lradot, \dots, \lradot)$. 
\end{proof}

Recall that networks that do not admit positive steady states (e.g., $0 \to A$) trivially have ACR (Remark~\ref{rem:no-pos-steady-states}).  Such trivial networks are ruled out in the next result (we assume that a positive steady state exists), which pertains to stable ACR.  
%To avoid making the proof too tedious, we state the result for networks with only one species, rather than the more general one-species networks.

\begin{proposition}[Stable ACR] \label{prop:1-species-stable}
  For a one-species network $G$ with at least one reaction, the following are equivalent:  
 \begin{enumerate}
 \item $G$ has stable ACR and admits a positive steady state, and
 \item the arrow diagram of $G$ has one of the following four forms:
    \begin{align} \label{eq:4-arrow-diagram-forms}
         (\lradot,~ \la, \la, \dots, \la)~, 
         &
        \quad 
        (\to, \to, \dots, \to, ~\lradot) ~, 
        \\
        \notag
        (\to, \to, \dots, \to, ~ \lradot, ~ \la, \la, \dots, \la)~,
        & \quad \quad 
        {\textrm or} \quad
        (\to, \to, \dots, \to, ~ \la, \la, \dots, \la)~.
    \end{align}
%
%    \begin{itemize}
%        \item $(\lradot,~ \la, \la, \dots, \la)$;
%        \item  $(\to, \to, \dots, \to, ~\lradot)$; or
%        \item $(\to, \to, \dots, \to, ~ \la, \la, \dots, \la)$.
%    \end{itemize}
 \end{enumerate}
\end{proposition}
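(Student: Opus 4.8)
The plan is to collapse the statement to a question about the single mass-action ODE, and then to translate ``stable ACR together with existence of a positive steady state'' into a purely combinatorial condition on the signs of the polynomial coefficients, which I will match against the four arrow-diagram forms in~\eqref{eq:4-arrow-diagram-forms}. First I would set up the reduction. Writing $A$ for the reacting species with reactant coefficients $a_1<\cdots<a_m$ as in Definition~\ref{def:arrow-diagram}, the ODE~\eqref{eq:ODE-mass-action} becomes a single univariate polynomial $f(x)=\sum_{i=1}^m c_i x^{a_i}$, where $c_i=\sum_{a_iA\to bA}\kappa_{a_ib}(b-a_i)$. Because distinct reactant coefficients involve disjoint sets of reactions, the $c_i$ are independent, and as $\kappa$ ranges over the positive orthant the vector $(c_1,\dots,c_m)$ ranges over $\prod_i I_i$, where $I_i=\mathbb{R}_{>0},\ \mathbb{R}_{<0},\ \mathbb{R}$ according as $\rho_i$ is $\to,\ \la,\ \lradot$. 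Since $S=\mathbb{R}$, a positive steady state $x^*$ (a positive root of $f$) is nondegenerate iff $f'(x^*)\neq 0$ and stable iff $f'(x^*)<0$; and as there is a single species, ACR here means simply that $f$ has at most one positive root for every $\kappa$.

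The heart of the argument is to prove that (1) is equivalent to the following condition $(\dagger)$: for every achievable coefficient vector, the sequence of signs of the nonzero $c_i$, read from lowest to highest degree, has at most one sign variation, every such variation is from $+$ to $-$, and at least one achievable vector does have a variation. I would prove $(\dagger)\Rightarrow(1)$ using Descartes' rule of signs together with the observation that the sign of $f$ near $0$ (respectively near $+\infty$) equals the sign of the lowest-degree (respectively highest-degree) nonzero coefficient: a single $+\to-$ variation forces exactly one positive root, necessarily simple and with $f'(x^*)<0$ (stable); no variation forces constant sign and hence no positive root; and the final clause of $(\dagger)$ supplies the required positive steady state. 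For $(1)\Rightarrow(\dagger)$ I would argue contrapositively. A vector with two or more sign variations exhibits reactant coefficients in the pattern $(\to,\la,\to)$ or $(\la,\to,\la)$, i.e.\ a $2$-alternating subnetwork, so by Proposition~\ref{prop:1-species} the network is multistationary and ACR fails. A vector whose unique variation is from $-$ to $+$ has $f<0$ near $0$ and $f>0$ near $+\infty$, hence exactly one positive root at which $f'>0$, an unstable positive steady state, so stable ACR fails. Finally, existence of a positive steady state forces at least one variation, since a variation-free sign sequence gives $f$ of constant sign or (only in the all-$\lradot$ case) $f\equiv 0$.

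Finally I would carry out the elementary combinatorial step $(\dagger)\Leftrightarrow(2)$. Encoding each position by the set of signs it may take ($\{+\}$ for $\to$, $\{-\}$ for $\la$, and $\{+,-,0\}$ for $\lradot$), condition $(\dagger)$ says that every choice of one admissible sign per position yields at most one variation and only of type $+\to-$. A short case analysis then shows this forces at most one $\lradot$ (two $\lradot$'s would allow a sign sequence with a $-\to+$ variation or with two variations, or, if they are the only positions, the degenerate all-$\lradot$ network), all $\to$'s before all $\la$'s, and the lone $\lradot$ (when present) sitting at the interface between the two blocks; discarding the degenerate configurations (all $\to$, all $\la$, or a single $\lradot$, which fail existence or nondegeneracy) leaves exactly the four forms of~\eqref{eq:4-arrow-diagram-forms}, and conversely each of those four forms visibly satisfies $(\dagger)$. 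I expect the main obstacle to be precisely this translation layer: determining which placements of the single $\lradot$ are compatible with $(\dagger)$, and correctly treating the all-$\lradot$ case, where $f$ may vanish identically so that a \emph{continuum} of degenerate steady states (rather than two isolated roots) is what destroys stable ACR. The analytic inputs -- Descartes' rule and the near-$0$/near-$\infty$ sign analysis -- are routine once the reduction is in place.
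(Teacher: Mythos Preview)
Your proposal is correct and follows essentially the same approach as the paper: both reduce to sign analysis of the univariate polynomial $f$, invoke Descartes' rule of signs together with Proposition~\ref{prop:1-species}, and then match the admissible sign patterns against arrow-diagram forms. Your organization via the intermediate condition $(\dagger)$ is a clean abstraction of what the paper does by directly listing the six complementary arrow-diagram forms (all $\to$'s, all $\la$'s, and the four ``reversed'' patterns) and checking each for nonexistence or instability of positive steady states.
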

\begin{proof}
We begin by proving $2 \Rightarrow 1$.  Assume that the arrow diagram of $G$ has one of the forms shown in~\eqref{eq:4-arrow-diagram-forms}.
In particular, the arrow diagram does not have the form $(\lradot, \dots, \lradot)$, and it also follows that $G$ has no 2-alternating subnetwork.  Thus, by Proposition~\ref{prop:1-species}, $G$ has ACR.  

Next, we prove that $G$ admits a positive steady state.  
We may assume that $G$ has only one species: $|\SS|=1$ (all species not appearing in the reactions may be disregarded without affecting the existence of positive and/or stable steady states).  
The arrow diagram of $G$ guarantees that there exists a vector of positive rate constants $\kappa^*=(\kappa_{ij})$ such that the right-hand side of the ODE $\frac{dx_A}{dt} = f_{\kappa^*}(x_A)$ has the following form:
\begin{align} \label{eq:ODE-1-species}
    f_{\kappa^*}(x_A) ~=~ \mu_1 x_A^{p_1} + \mu_2 x_A^{p_2} + \dots +  \mu_m x_A^{p_m}~,
\end{align}
where $m \geq 2$, $\mu_1 >0$ and $\mu_m<0$, and also $0 \leq p_1 < p_2 < \dots < p_m$.  Thus, 
    $f_{\kappa^*}(x_A) > 0$
for $x_A>0$ sufficiently small, 
while 
    $f_{\kappa^*}(x_A) < 0$
for $x_A$ sufficiently large.  Hence, the intermediate-value theorem implies that $f_{\kappa^*}(x^*_A) = 0$ for some $x^*_A>0$, that is, a positive steady state $x_A^*$ of $(G,\kappa^*)$ must exist.

What remains to show is that every positive steady state is exponentially stable.  Accordingly, assume that $x_A^* \in \mathbb{R}_{>0}$ is a steady state of $(G,\kappa^*)$ for some $\kappa^* \in \mathbb{R}^{|\mathcal{R}]}$. 
Then, as $G$ has ACR, $x_A^*$ is the unique positive steady state of $(G,\kappa^*)$.  So, by Descartes' rule of signs and the form of the arrow diagrams~\eqref{eq:4-arrow-diagram-forms}, the right-hand side of the ODE, as in~\eqref{eq:ODE-1-species}, has exactly 1 sign change and $x_A^*$ is a multiplicity-one root of $f_{\kappa^*}(x_A)$.  The same argument as above therefore shows that     $f_{\kappa^*}(x_A) $ is positive for $x_A>0$ sufficiently small and negative for $x_A$ large.  
We conclude that 
$f_{\k^*}$ is decreasing at $x^*_A$. 
Hence,  
$f'_{\kappa^*}(x^*_A)<0$ 
%{\color{red}The argument here is that $f_{\k^*}$ decreases, right?}
(here, $f'_{\kappa^*}(x^*_A) = 0$ is precluded because $x_A^*$ has multiplicity one), that is, $x_A^*$ is exponentially stable, as desired.

We prove $1 \Rightarrow 2$.  Assume that $G$ has ACR, and the arrow diagram does \uline{not} have one of the forms
shown in~\eqref{eq:4-arrow-diagram-forms}.  So, by Proposition~\ref{prop:1-species} and the fact that $G$ has at least one reaction, the arrow diagram must have one of the following forms:
\begin{enumerate}[(i)]
        \item  $(\to, \to, \dots, \to)$, 
        \item  $(\la, \la, \dots, \la)$, 
        \item $(\la, \la, \dots, \la,~ \to, \to, \dots, \to)$,
        \item $(\lradot,~ \to, \to, \dots, \to)$, 
        \item  $(\la, \la, \dots, \la, ~\lradot)$, or
        \item  $(\la, \la, \dots, \la, ~\lradot,~ \to, \to, \dots, \to)$.
\end{enumerate}
We must show that every resulting ODE system, if it has a positive steady state, is unstable.  
When the arrow diagram has the form in 
(i) or (ii), there are no positive steady states.  Next, assume that the arrow diagram has the form in (iii), (iv), (v), or (vi).  In this case, by applying Descartes' rule of signs to the the right-hand side of the ODE as in~\eqref{eq:ODE-1-species} (like what was done earlier in the proof), we see that each resulting ODE system, if there is a positive steady state $x_A^*$, then $x_A^*$ is unstable.  This completes the proof.
\end{proof}

\begin{example} \label{ex:1-species}
The network $G=\{0 \lra A~,~ 2A\to 3A\}$ has arrow diagram $(\to, \la, \to)$ and hence is itself a 2-alternating network.  So, 
by Proposition~\ref{prop:1-species}, $G$ does not have ACR.
The network  $H=\{0 \lra A \to 2A\}$ has arrow diagram $(\to, \lradot)$. 
%and does not have a 2-alternating subnetwork.  
Thus, by  
Proposition~\ref{prop:1-species-stable},   $H$ has stable ACR (in species $A$).
\end{example}

\begin{example}[Example~\ref{ex:1-species-4-reactions}, continued] \label{ex:1-sp-4-rxn-cont}
The one-species network from Example~\ref{ex:1-species-4-reactions} has the following 2-alternating subnetwork: 
$\{ A_1 \to 2A_1~,~ A_1 \leftarrow 2A_1~,~ 3A_1 \to 4A_1\}$.  Thus, Proposition~\ref{prop:1-species} implies what we saw earlier: the network does not have ACR.
\end{example}

Next, we analyze the special case of one-species networks with only two reactions.
\begin{proposition}[ACR in 1-species, 2-reaction network] \label{prop:1-species-2-reactions}
  Let $G$ be a one-species network with exactly two reactions.  
Then the following are equivalent:
        \begin{enumerate}%[label=(\roman*)]
            \item $G$ has ACR,
            \item the arrow diagram of $G$ is either $(\to, \leftarrow)$ or $(\leftarrow, \to)$, and
            \item $G$ can be obtained by network operations (Definition~\ref{def:operations})
            %, excluding duplicating reactions, 
            from the network $\{0 \lra nA \}$, for some $n \geq 1$.
        \end{enumerate}
\end{proposition}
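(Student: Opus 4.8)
The plan is to prove the equivalence of the three statements by establishing a cycle of implications, or more conveniently by first nailing down the purely combinatorial equivalence $(1) \Leftrightarrow (2)$ using the already-proven one-species machinery, and then handling $(2) \Leftrightarrow (3)$ by direct inspection of how network operations act on arrow diagrams. For $(1) \Leftrightarrow (2)$, I would invoke Proposition~\ref{prop:1-species}: since $G$ has exactly two reactions, its arrow diagram $\rho = (\rho_1, \rho_2)$ (or possibly a single entry $(\rho_1)$, if both reactions share a reactant complex) has length $m \in \{1,2\}$. A 2-alternating subnetwork requires three reactions, so $G$ cannot contain one. Thus by Proposition~\ref{prop:1-species}, $G$ has ACR if and only if its arrow diagram is \emph{not} of the form $(\lradot, \dots, \lradot)$. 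I would then enumerate the finitely many arrow diagrams achievable by a one-species two-reaction network and check which avoid the all-$\lradot$ pattern while still admitting a positive steady state in the non-vacuous sense intended. The key observation is that with only two reactions, a single reactant complex ($m=1$) forces the diagram to be one of $(\to)$, $(\la)$, or $(\lradot)$, none of which gives ACR with a genuine positive steady state except the trivial/vacuous cases; whereas two distinct reactant complexes ($m=2$) give a diagram in $\{\to, \la, \lradot\}^2$, and the ACR-with-positive-steady-state condition singles out exactly $(\to, \la)$ and $(\la, \to)$. I expect the slightly delicate point here is arguing that the mixed cases like $(\to, \lradot)$ or $(\lradot, \la)$ are excluded: although they avoid the all-$\lradot$ form and hence are non-multistationary by Proposition~\ref{prop:1-species}, I must confirm that for a two-reaction network the only diagrams realizable are the ``opposing'' ones, since $\lradot$ at a reactant requires two reactions out of that same complex, leaving no room for a second distinct reactant.

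For $(2) \Rightarrow (3)$, I would start from a network whose arrow diagram is $(\to, \la)$ (the case $(\la, \to)$ being symmetric, obtainable by relabeling or by the symmetry of the construction) and show it can be reduced to $\{0 \lra nA\}$ by the operations of Definition~\ref{def:operations}. Concretely, such a network has reactions $a_1 A \to b_1 A$ with $b_1 > a_1$ and $a_2 A \to b_2 A$ with $b_2 < a_2$, where $a_1 < a_2$. The stoichiometric subspace is one-dimensional (spanned by $A$), so I can use translation to move the common structure and stretching to adjust arrow lengths: translating by $-a_1$ sends the first reactant to $0$, and since the two reaction vectors point in opposite directions along the single axis, I can stretch each reaction to make them land on a common pair $\{0, nA\}$ forming the reversible network $\{0 \lra nA\}$. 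The precise choice of $n$ and the bookkeeping of which reactant translates to $0$ is routine once the directions are fixed.

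For $(3) \Rightarrow (1)$ (closing the cycle), I would observe that $\{0 \lra nA\}$ manifestly has ACR — it has a unique positive steady state $x_A^* = (\kappa_1/\kappa_2)^{1/n}$ — and then invoke Theorem~\ref{thm:operations-ACR}, which guarantees that all five network operations preserve ACR. Hence any $G$ obtained from $\{0 \lra nA\}$ by such operations also has ACR, giving $(3) \Rightarrow (1)$ and completing the equivalence. The main obstacle I anticipate is the careful case analysis in $(1) \Leftrightarrow (2)$: I must rule out the ``one-sided plus bidirectional'' arrow diagrams correctly, being careful to distinguish between a network being non-multistationary (which the all-$\lradot$ criterion governs) and a network genuinely having a positive steady state, and to handle the length-one arrow diagram case where both reactions emanate from the same reactant complex. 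Everything downstream is a clean application of the operations-preserve-ACR theorem and a short explicit reduction, so I would budget most of the effort on making the combinatorial enumeration airtight.
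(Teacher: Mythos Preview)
Your overall strategy matches the paper's: use Proposition~\ref{prop:1-species} for $(1) \Leftrightarrow (2)$, explicitly reduce for $(2) \Rightarrow (3)$, and close with Theorem~\ref{thm:operations-ACR} for $(3) \Rightarrow (1)$. Your enumeration for $(1) \Leftrightarrow (2)$ is in fact more careful than the paper's one-line appeal: you correctly observe that a $\lradot$ entry at a reactant complex already uses up two reactions, so a two-reaction network with two distinct reactant complexes has arrow diagram in $\{\to,\la\}^2$, ruling out $(\to,\lradot)$ and the like.

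There is, however, a genuine gap in your $(2) \Rightarrow (3)$. You treat the $(\to, \la)$ case and dismiss $(\la, \to)$ as ``symmetric, obtainable by relabeling or by the symmetry of the construction.'' It is not. Relabeling does nothing for a one-species network, and translation together with stretching cannot convert a $(\la, \to)$ diagram into $\{0 \lra nA\}$: both operations preserve the direction of each arrow relative to its own reactant, so the two arrows continue to point \emph{away} from one another, whereas in $\{0 \lra nA\}$ they point \emph{toward} one another. The paper breaks this asymmetry by applying partial scaling of $A$ by the factor $-1$, which reverses every reaction vector and reduces to the $(\to, \la)$ case already handled. But partial scaling by $-1$ can push a product complex to a negative coefficient (for instance, applied to $\{A \leftarrow 2A,~ 3A \to 7A\}$ it sends $3A \to 7A$ to $3A \to -A$), so the paper first translates the network rightward when necessary to guarantee nonnegativity. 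This two-step maneuver---translate, then partially scale by $-1$---is the real content of the $(\la, \to)$ case, and it is absent from your outline.
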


\begin{proof}
The equivalence $1 \Leftrightarrow 2$ follows easily from Proposition~\ref{prop:1-species}. 

Next, we prove the implication $1 \Rightarrow 3$.  We consider two cases.  First, assume that the arrow diagram of $G$ is $(\to, \leftarrow)$. 
So, after relabeling species if necessary, $G=\{aA \to bA~,~ cA \leftarrow dA \}$ for some non-negative integers $a,b,c,d$ with $a<b$, $a<d$, and $c<d$.  Thus, $n:=d-a \geq 1$.  Recall that all operations, except duplicating a reaction, are reversible (Remark~\ref{rem:reverse-operation}).  So, it suffices to transform $G$ by operations to the network $\{0 \lra nA\}$.  Accordingly, we first stretch each reaction of $G$ to have length $n=d-a$, and so obtain $\{aA \lra dA \}$.  Next, we translate the network to the left by $a=d-n$, which yields $\{0 \lra nA\}$, and so we are done.

Now consider the remaining case, when $G$ has arrow diagram
 $(\leftarrow, \to)$.  Hence, $G=\{ aA \leftarrow bA~,~ cA \to dA \}$, for some non-negative integers $a,b,c,d$ with $a<b<c<d$.  If 
 $2c<d$,
 %$d>2c$, 
 translate the network to the right by $d-2c$ to obtain 
 $\{a'A \leftarrow b'A~,~ c'A \to d'A \}$ with $2c' \geq d'$ (in fact, equality holds).  
 If $2c \geq d$, 
 simply relabel $a,b,c,d$ by (respectively) $a',b',c',d'$.  
 Next, apply the operation of partial scaling by $-1$ to the species $A$, which yields $\{b'A \to (b'-a')A ~,~ (2c'-d')A \leftarrow c'A \}$, which satisfies (by construction) $b' < b'-a'$, $b'<c'$, and $0 \leq 2c'-d' = c'- (d'-c') < c'$.  Hence, this final network has arrow diagram 
$(\to, \leftarrow)$, and so we have reduced to the prior case, which we already solved above.

Finally, the implication $3 \Rightarrow 1$ holds: 
we already noted that the networks $\{ 0 \lra nA\}$ have ACR (Example~\ref{ex:acr-but-not-shinar--Feinberg}), and network operations preserve ACR (Theorem~\ref{thm:operations-ACR}).
\end{proof}

Finally, we consider translations of one-species networks with two reactions. 
% One such network is $\{B \lra A+B \}$, which is obtained by translating the network $\{0 \lra A\}$.  
The following result follows easily from Proposition~\ref{prop:1-species-2-reactions}.

\begin{corollary} \label{cor:translate-1-species-2-reactions}
Let  $G$ be a reaction network that is obtained by translation from a one-species network that has exactly two reactions.  Then the following are equivalent:
      \begin{enumerate}
            \item $G$ has ACR,
            \item $G$ can be obtained by network operations (Definition~\ref{def:operations})
            %, excluding duplicating reactions, 
            from the network $\{0 \lra nA \}$, for some $n \geq 1$.      \end{enumerate}
\end{corollary}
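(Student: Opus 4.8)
The plan is to reduce the claim to Proposition~\ref{prop:1-species-2-reactions}, exploiting the fact that translation is one of the network operations of Definition~\ref{def:operations}, that all such operations preserve ACR (Theorem~\ref{thm:operations-ACR}), and that they compose. By hypothesis there is a one-species network $H$ with exactly two reactions and a vector $z \in \mathbb{Z}^s$ such that $G$ is the translate of $H$ by $z$. I would first record two harmless structural observations: since $H$ involves only one active species, $G$ differs from $H$ only in that the species with $z_j \neq 0$ have been promoted to catalyst-only species; and since translation preserves reaction vectors and sends distinct reactants to distinct reactants, $G$ again has exactly two (nontrivial) reactions. To make the operation of translation legitimate, I would regard $H$ as carrying the full species set of $G$, with the extra species simply absent from its reactions, which Definition~\ref{def:1-d-1-species} permits.

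For the implication $2 \Rightarrow 1$ I would argue directly: the network $\{0 \lra nA\}$ has ACR (Example~\ref{ex:acr-but-not-shinar--Feinberg}), and each operation preserves ACR by Theorem~\ref{thm:operations-ACR}, so any network obtained from it by a sequence of operations, including $G$, has ACR. This direction does not use the translation hypothesis at all.

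For the converse $1 \Rightarrow 2$, which carries the content, I would first pull ACR back along the translation: since $G$ is the translate of $H$ and translation preserves ACR (Theorem~\ref{thm:operations-ACR}), the network $H$ has ACR as well. Because $H$ is a one-species network with exactly two reactions, I can invoke Proposition~\ref{prop:1-species-2-reactions} (implication $1 \Rightarrow 3$) to obtain some $n \geq 1$ together with a finite sequence of operations producing $H$ from $\{0 \lra nA\}$. Appending the translation by $z$ to the end of this sequence then exhibits $G$ as obtained from $\{0 \lra nA\}$ by network operations, which is exactly statement~2.

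I do not expect a genuine obstacle here: the entire mathematical substance sits in Proposition~\ref{prop:1-species-2-reactions}, and this corollary is essentially the remark that one additional translation can be absorbed into the operation sequence in either direction. The only point demanding a little care is the bookkeeping of the common species set needed to make the translation $H \to G$ a valid instance of Definition~\ref{def:operations}, handled by the species-set convention noted above.
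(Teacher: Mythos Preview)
Your proposal is correct and matches the paper's approach: the paper simply states that the corollary ``follows easily from Proposition~\ref{prop:1-species-2-reactions},'' and you have spelled out exactly how, using Theorem~\ref{thm:operations-ACR} to transfer ACR across the translation in both directions and then appending that translation to the operation sequence furnished by the proposition. The species-set bookkeeping you flagged is indeed the only mildly delicate point, and your handling of it via Definition~\ref{def:1-d-1-species} is appropriate.
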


\section{One-dimensional networks} \label{sec:main-results}
In the prior section, Proposition~\ref{prop:1-species-stable} 
characterized stable ACR for one-species networks. Our goal here is to generalize that result to one-dimensional networks (see Theorem~\ref{thm:1-dim-stable}). 

We begin with two lemmas, which imply that ACR in a one-dimensional network constrains the arrow diagrams of its one-species embedded networks.  Recall that a reactant complex $y$ is such that $y \to y'$ is a reaction for some complex $y'$.

\begin{lemma} \label{lem:1-dim-reactants}
Let $G$ be a one-dimensional network with species $A_1, A_2, \dots, A_s$.
Assume that $G$ admits a positive steady state.  
If $G$ has ACR in some species $A_{i^*}$, then 
the reactant complexes of $G$ differ only in species $A_{i^*}$ 
(that is, if $y$ and $\widetilde y$ are both reactant complexes of $G$, then $y_i=\widetilde{y}_i$ for all $i \neq i^*$).
\end{lemma}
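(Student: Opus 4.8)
The plan is to reduce the steady-state condition to a single scalar equation, linearize it in logarithmic coordinates, and then show that ACR forces the reactant complexes to collapse in every coordinate except $A_{i^*}$.

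\textbf{Setup.} Since $G$ is one-dimensional, every reaction vector is a nonzero scalar multiple of a fixed vector $v \in \mathbb{R}^s$; write each reaction $y \to y'$ as $y'-y = \lambda_{y \to y'} v$ with $\lambda_{y \to y'} \neq 0$. Then the mass-action ODEs~\eqref{eq:ODE-mass-action} take the form $\dot{x} = g_\kappa(x)\, v$, where $g_\kappa(x) = \sum_{y \in Y} c_y(\kappa)\, x^y$, the sum runs over the set $Y$ of distinct reactant complexes, and $c_y(\kappa) = \sum_{y \to y'} \kappa_{y \to y'} \lambda_{y \to y'}$. Because $v \neq 0$, the positive steady-state locus is exactly $\{x \in \mathbb{R}^s_{>0} : g_\kappa(x) = 0\}$. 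Letting $\hat{y}$ denote $y$ with its $i^*$-coordinate deleted, and setting $W := \{\hat{y} : y \in Y\}$, the assertion of the lemma is equivalent to $|W| = 1$.

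\textbf{Log-linear structure.} Set $U := \operatorname{span}\{\,y - \tilde{y} : y, \tilde{y} \in Y\,\}$. Substituting $x = e^u$ turns $g_\kappa$ into the exponential sum $h_\kappa(u) = \sum_{y \in Y} c_y(\kappa)\, e^{\langle y, u\rangle}$, and for any $w \in U^\perp$ the inner products $\langle y, w\rangle$ are constant across $Y$, so $h_\kappa(u+tw)$ is a positive multiple of $h_\kappa(u)$. Hence the locus, in logarithmic coordinates, is a union of cosets of $U^\perp$. Now ACR in $A_{i^*}$ says $u_{i^*}$ is constant on the locus; applying this along a coset forces $w_{i^*}=0$ for all $w \in U^\perp$, that is $e_{i^*} \in (U^\perp)^\perp = U$. (Equivalently, if $e_{i^*} \notin U$, any nonempty locus would contain a segment along which $x_{i^*}$ varies, contradicting ACR; this already uses the hypothesis that $G$ admits a positive steady state.) It therefore remains to prove $\dim U = 1$, for then $U = \operatorname{span}(e_{i^*})$ and $|W| = 1$. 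To this end, group monomials by their projection: $g_\kappa(x) = \sum_{w \in W} \hat{x}^{\,w} P_w(x_{i^*})$, where $P_w(t) = \sum_{y : \hat{y} = w} c_y(\kappa)\, t^{y_{i^*}}$ is a univariate (Laurent) polynomial.

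\textbf{Collapsing the support, and contradiction.} The key step is that for every $\kappa$ whose locus is nonempty, ACR forces $P_w(c_\kappa) = 0$ for all $w \in W$, where $c_\kappa$ is the ACR value. Indeed, the locus is a nonempty real hypersurface $\{g_\kappa = 0\}$ of dimension $s-1$ contained in the $(s-1)$-dimensional hyperplane $\{x_{i^*} = c_\kappa\}$, so it must fill an open subset of that hyperplane; restricting $g_\kappa$ to $\{x_{i^*} = c_\kappa\}$ yields a real-analytic function vanishing on an open set, hence identically, and linear independence of the distinct monomials $\hat{x}^{\,w}$ gives $P_w(c_\kappa) = 0$ for every $w$. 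Thus all the $P_w$ share the common positive root $c_\kappa$. Now suppose $\dim U \geq 2$, so $|W| \geq 2$ (and necessarily $|Y| \geq 3$). Since ACR is required for \emph{every} $\kappa$, the common-root relation among the $\{P_w\}$ would have to persist for all $\kappa$ in the (open) set admitting a positive steady state; but each reaction feeds exactly one coefficient $c_y$, so the $c_y$ attached to distinct reactant complexes vary essentially independently, and simultaneous vanishing of all $P_w$ at one point is a nontrivial resultant-type condition that fails on an open set of admissible $\kappa$. This contradicts the key step, forcing $\dim U = 1$.

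\textbf{Main obstacle.} The crux is the key step: upgrading ``the locus is contained in the hyperplane $x_{i^*} = c_\kappa$'' to ``the support collapses,'' i.e.\ ruling out accidental containment of a lower-dimensional locus. Two points need care: (i) ensuring the locus is genuinely codimension one for a suitable $\kappa$, so that the analytic-vanishing argument applies; and (ii) the bookkeeping that the coefficients $c_y$ can be chosen to break any common-root relation while keeping a positive steady state. This is exactly where the hypothesis that $G$ admits a positive steady state enters—guaranteeing the sign mixing among the $c_y$ needed for a root to exist, via an intermediate-value argument as in the proof of Proposition~\ref{prop:1-species-stable}—together with the requirement that ACR hold for all $\kappa$.
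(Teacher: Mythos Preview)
Your log-linear setup and the argument that $e_{i^*}\in U$ via the coset structure are correct and elegant. The genuine gap is in the ``key step,'' and it is not merely a matter of bookkeeping: your justification asserts that the positive locus $\{g_\kappa=0\}$ has real dimension $s-1$, but over $\mathbb{R}$ the zero set of a single polynomial can drop dimension. Take $g(x)=(x_1-1)^2+(x_2-1)^2$ on $\mathbb{R}^3_{>0}$ (five monomials $x_1^2,x_2^2,x_1,x_2,1$ with coefficients $1,1,-2,-2,2$): the locus is the line $\{(1,1,x_3):x_3>0\}$, the system has ACR in $A_1$ with value $c=1$, yet the restriction $g|_{x_1=1}=(x_2-1)^2$ is not identically zero and $P_{(0,0)}(1)=1\neq 0$. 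So the implication ``system-level ACR $\Rightarrow$ all $P_w(c_\kappa)=0$'' is false as stated; any valid argument here must exploit that ACR holds for \emph{all} $\kappa$, not just one. Your closing paragraph gestures at this (``resultant-type condition \dots\ fails on an open set of admissible $\kappa$'') but supplies no construction of a $\kappa$ that admits a positive steady state while breaking the common-root relation---and producing such a $\kappa$ is essentially the content of the lemma.

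The paper's route avoids both the dimension claim and the genericity appeal. Assuming two reactant complexes differ in some $A_2$ with $2\neq i^*$, it fixes $(\kappa^*,x^*)$ with $x^*$ a positive steady state and views the right-hand side of the $x_2$-ODE, specialized at $x_j=x_j^*$ for $j\neq 2$, as a univariate polynomial $\tilde g(x_2)$ with positive root $x_2^*$. If $x_2^*$ happens to be a multiple root, one translates the graph of $\tilde g$ vertically by nudging a single rate constant that feeds its constant term, producing a nearby simple root. Then perturb the $A_{i^*}$-coordinate $x_{i^*}^*\mapsto x_{i^*}^*+\delta$; the simple root persists, yielding a second positive steady state with a different $A_{i^*}$-value, contradicting ACR. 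This requires only one transverse zero rather than a codimension-one locus, and the rate-constant adjustment is an explicit one-parameter move rather than an unspecified genericity argument.
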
 
\begin{proof}
Let $G$ be a one-dimensional network that admits a positive steady state and has ACR in species $A_{i^*}$.  By relabeling, if necessary, we may assume that $i^*=1$.  Assume for contradiction that there exist two reactant complexes that differ in some species $A_i$ with $i \geq 2$.  
By relabeling species, if needed, we may assume that $i=2$.  
It follows that the embedded diagram obtained from $G$ by removing all species except $A_2$, which we denote by $N_2$, has at least two distinct reactant complexes.  
% Thus, the arrow diagram of $N_2$ has length at least 2.
%We consider two cases, based on whether $N_2$ has a 1-alternating subnetwork.  If it does not, then the arrow diagram of $N_2$ has the form $(\to, \to, \dots, \to)$ or $(\la, \la, \dots, \la)$ and so $N_2$ has no positive steady states.  {\color{red} Do we need to explain this better? } But this is a contradiction, as $G$ admits a positive steady state.  {\color{red} Explain more? Actually, not sure whether we will need to consider this case.}
%
%Now consider the remaining case, when $N_2$ has a $1$-alternating subnetwork.  
%

By hypothesis, $G$ admits a positive steady state, so there exist 
      positive rate constants 
            $k_1^*, k_2^*, \dots, k_r^*$ 
      for which $G$ admits a positive steady state $\mathbf{x}^* = (x_1^*, x_2^*, \dots, x_s^*)$.   
Let $g$ denote the right-hand side of the mass-action ODE for $x_2$.  By assumption, $x_2^*$ is a positive root of the following univariate (in $x_2$) polynomial, obtained by specializing $g$:
\begin{align} \label{eq:specialize-g}
    \widetilde g ~:=~ g|_{k_1=k_1^*, k_2=k_2^*, \dots, k_r =k_r^*,~ x_1=x_1^*, x_3=x_3^*, x_4=x_4^*, \dots, x_s=x_s^*  }~.
\end{align}
Recall that $N_2$ has at least two reactant complexes.  So, unless the arrow diagram of $N_2$ has the form $(\lradot, \dots, \lradot)$, the polynomial $\widetilde g$ is not the zero polynomial.  If, on the other hand, the arrow diagram is $(\lradot, \dots, \lradot)$, then make another choice for the  $k_i^*$ and $x^*_i$, as follows.  The form of the arrow diagram and the fact that there are at least two reactant complexes ensure that we can pick $k_1^*, k_2^*, \dots, k_r^*>0$ and set  $x_1^* =x_3^* =x_4^* = \dots =x_s^* =1$ such that the new version of $\widetilde{g}$, as in~\eqref{eq:specialize-g}, has at least two terms and is such that the leading coefficient is positive and the lowest-order coefficient is negative.  Hence, by Descartes' rule of signs, there exists a positive root $x^*_2$ of $\widetilde g$.

Let $x_2^{\alpha}$ denote the lowest power of $x_2$ that appears in the polynomial $\widetilde{g}  $.  It follows that $h:= \frac{\widetilde{g}}{x_2^{\alpha}}$ is a univariate polynomial that has $x^*_2$ as a root and has a nonzero constant term. 

We make the following claim:

\noindent
{\bf Claim:}
We may assume that $x_2^*$ is a multiplicity-one root of $h$.  

We prove the claim as follows.  
If $x_2^*$ is a multiple root of $h$ (that is, $h' (x^*_2)=0$), then we simply need to translate the graph of $h$ up or down a small amount in order for a new positive root (near $x_2^*$), with multiplicity one, to appear.  (Here we are using the fact that $h$ is a non-constant polynomial.)  We achieve this translation by replacing one of the rate constants $k_i^*$ that contributes to the constant term of $h$ (notice that each rate constant contributes to only one monomial) by $k_i^*+ \epsilon$ or  $k_i^*- \epsilon$ (for sufficiently small $\epsilon$).  We also ensure that $\epsilon$ is sufficiently small so that  $k_i^*- \epsilon >0 $.  Our claim therefore holds.

Now, our claim implies that by perturbing $x_1^*$, we preserve a (positive) root (near $x_2^*$).  More precisely, there exists a small $\delta>0$, such that the following univariate polynomial has a positive root, which we denote by $x_2^{**}$:
\begin{align*}
    \frac { g|_{k_1=k_1^*, k_2=k_2^*, \dots, k_r =k_r^*,~ 
x_1=x_1^* + \delta, x_3=x_3^*, x_4=x_4^*, \dots, x_s=x_s^*  }
}
{x_2^{\alpha}}~.
\end{align*}
We conclude that $(x_1^* + \delta, x_2^{**}, x_3^*, x_4^*, \dots, x_s^*)  $ is a positive steady state of $(G, \kappa^*)$, where 
$\kappa^*:=(k_1^*, k_2^*, \dots, k_r^*)$.   However, 
$(x_1^*, x_2^{*}, \dots, x_s^*)  $ is also positive steady state of $(G, \kappa^*)$, and so $G$ does not have ACR in $A_1$.  This is a contradiction. %
\end{proof}

\begin{remark} \label{rem:1-reactant-embed-net} 
The condition from Lemma~\ref{lem:1-dim-reactants} that the reactant complexes of $G$ differ only in the species $A_{i^*}$
implies that the embedded network of $G$ obtained by removing $A_{i^*}$
%is equivalent to requiring that the embedded network of $G$ obtained by removing all species except $A_i$ 
has at most one reactant complex (the embedded network has no reactant complexes if it has no reactions).
\end{remark}

\begin{lemma}[ACR for one-dimensional networks] \label{lem:1-dim}
 Let $G$ be a one-dimensional network with species $A_1, A_2, \dots, A_s$.
Assume that $G$ admits a positive steady state.  
If $G$ has ACR in some species $A_{i^*}$ that is \uline{not} a catalyst-only species, then the following holds:
	\begin{enumerate}[(a)] 	
	\item  the embedded network of $G$ obtained by removing all species except $A_{i^*}$, which we denote by $N$, 
	does not have a 2-alternating subnetwork, and 
	 the arrow diagram of $N$ does not have the form 
	 $(\lradot, \dots, \lradot)$, and
	\item the reactant complexes of $G$ differ only in species $A_{i^*}$ 
(that is, if $y$ and $\widetilde y$ are both reactant complexes of $G$, then $y_i=\widetilde{y}_i$ for all $i \neq i^*$).
%	 the embedded network of $G$ obtained by removing all species except $A_i$
%	 either is trivial (that is, has no reactions) 
%	 or has arrow diagram $\lradot$.
	\end{enumerate}
Moreover, if $G$ has stable ACR, then 
	 the arrow diagram of $N$ has one of these four forms: 
    \begin{align} \label{eq:4-arrow-diagram-forms-1-d}
         (\lradot,~ \la, \la, \dots, \la)~, 
         &
        \quad 
        (\to, \to, \dots, \to, ~\lradot) ~, 
        \\
        \notag
        (\to, \to, \dots, \to, ~ \lradot, ~ \la, \la, \dots, \la)~,
        & \quad \quad 
        {\textrm or} \quad
        (\to, \to, \dots, \to, ~ \la, \la, \dots, \la)~.
    \end{align}
\end{lemma}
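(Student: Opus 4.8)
The plan is to read off part (b) directly from Lemma~\ref{lem:1-dim-reactants}, and to obtain part (a) and the ``moreover'' clause by transferring ACR (respectively, stable ACR) of the one-dimensional network $G$ to its one-species embedded network $N$, and then quoting Proposition~\ref{prop:1-species} (respectively, Proposition~\ref{prop:1-species-stable}). Part (b) is immediate, since it is precisely the conclusion of Lemma~\ref{lem:1-dim-reactants}, whose hypotheses (one-dimensional, admits a positive steady state, ACR in $A_{i^*}$) all hold here. To set up the transfer, write $S=\mathrm{span}(v)$ for a nonzero $v\in\mathbb{R}^s$. Because $A_{i^*}$ is not catalyst-only, some reaction of $G$ changes the $i^*$-coordinate (Definition~\ref{def:cat-only}); as that reaction vector is a nonzero multiple of $v$, we get $v_{i^*}\neq 0$, so in fact \emph{every} reaction of $G$ moves the $i^*$-coordinate and hence projects to a nontrivial reaction of $N$. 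In particular $N$ has at least one reaction and Proposition~\ref{prop:1-species} applies. Writing each reaction vector as $y'-y=c_{y\to y'}\,v$, the mass-action ODEs factor as $f_\kappa(\mathbf{x})=g_\kappa(\mathbf{x})\,v$ with $g_\kappa(\mathbf{x})=\sum_{y\to y'}\kappa_{y\to y'}\,c_{y\to y'}\,\mathbf{x}^{y}$, so the positive steady states of $(G,\kappa)$ are exactly the positive zeros of $g_\kappa$.

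The key step uses part (b) to factor $g_\kappa$. Since all reactant complexes of $G$ agree outside $A_{i^*}$, with common coefficient $w_j$ in $A_j$ for $j\neq i^*$, each reactant monomial is $\mathbf{x}^{y}=M(\mathbf{x})\,x_{i^*}^{\,y_{i^*}}$, where $M(\mathbf{x})=\prod_{j\neq i^*}x_j^{\,w_j}>0$ on the positive orthant. Hence $g_\kappa(\mathbf{x})=M(\mathbf{x})\,\tilde g(x_{i^*})$ for a univariate polynomial $\tilde g$, and a direct computation gives $v_{i^*}\,\tilde g=f_N$, the right-hand side of the mass-action ODE of $N$ in which the rate constant of each reaction $a\to b$ is the sum of the $\kappa_{y\to y'}$ over the $G$-reactions projecting to $a\to b$. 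I would extract two consequences: the $x_{i^*}$-values arising among positive steady states of $(G,\kappa)$ are exactly the positive roots of $f_N$ for the induced rate constants, with the other coordinates entirely free; and, as $\kappa$ ranges over all positive rate constants of $G$, the induced rate constants range over all positive rate constants of $N$, because each reaction of $N$ has a nonempty fibre over which any positive value can be distributed.

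For part (a), suppose $N$ were multistationary. By Proposition~\ref{prop:1-species} there would be induced rate constants for which $f_N$ has two distinct positive roots $r_1\neq r_2$; lifting them to some $\kappa$ for $G$ yields positive steady states of $(G,\kappa)$ with $x_{i^*}=r_1$ and with $x_{i^*}=r_2$, contradicting ACR of $G$ in $A_{i^*}$. Thus $N$ is non-multistationary, and Proposition~\ref{prop:1-species} gives (a). For the ``moreover'' statement, the Jacobian is the rank-one matrix $df_\kappa(\mathbf{x}^*)=v\,(\nabla g_\kappa(\mathbf{x}^*))^{T}$, whose single nonzero eigenvalue is $\nabla g_\kappa(\mathbf{x}^*)\cdot v$. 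Evaluated at a positive steady state (where $\tilde g$ vanishes) one finds $\nabla g_\kappa|_*=M\,\tilde g'(x_{i^*}^*)\,e_{i^*}$, so the eigenvalue equals $M\,f_N'(x_{i^*}^*)$; since $M>0$, nondegeneracy and stability of a steady state of $G$ are equivalent to those of the corresponding steady state of $N$. Hence stable ACR of $G$ in $A_{i^*}$ transfers to stable ACR of $N$, which also admits a positive steady state, and Proposition~\ref{prop:1-species-stable} forces the arrow diagram of $N$ into one of the forms in~\eqref{eq:4-arrow-diagram-forms-1-d}.

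The main obstacle I anticipate is the bookkeeping of the rate-constant correspondence: verifying that $\kappa\mapsto\tilde\kappa$ surjects onto the positive rate constants of $N$ (so that multistationarity of $N$ really lifts) while keeping all lifted constants strictly positive, together with confirming the eigenvalue computation so that it genuinely preserves both nondegeneracy and the sign that controls stability.
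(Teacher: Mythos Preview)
Your proposal is correct and follows essentially the same strategy as the paper: reduce to the one-species embedded network $N$ and invoke Propositions~\ref{prop:1-species} and~\ref{prop:1-species-stable}. One minor structural difference is that you first establish (b) and then use it to factor $g_\kappa(\mathbf{x})=M(\mathbf{x})\,\tilde g(x_{i^*})$ globally, whereas the paper treats (a) and (b) independently, obtaining the reduction to $N$ by simply specializing $x_j=x_j^*$ for $j\neq i^*$ at a given positive steady state and absorbing the resulting constants into new rate constants $\widetilde{k_i}=k_i\prod_{j\neq i^*}(x_j^*)^{y_{ij}}$; this route does not need (b) at all. Your explicit rank-one Jacobian computation is a nice addition, since the paper leaves the stability transfer as ``straightforward to show.'' Finally, the obstacle you flag is in fact a non-issue: once you have (b) and $v_{i^*}\neq 0$, two distinct reactions of $G$ with the same reactant and the same $i^*$-component of the product must have equal reaction vectors (both being multiples of $v$ with the same $i^*$-component), hence are identical; so the projection $G\to N$ is injective on reactions and the rate-constant map $\kappa\mapsto\tilde\kappa$ is a bijection, making surjectivity immediate.
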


\begin{proof}
Let $G$ be a one-dimensional network that admits a positive steady state and has ACR in some non-catalyst-only species $A_{i^*}$.  By relabeling species, if needed, we may assume $i^*=1$.  Let $N$ denote the embedded network obtained by removing all species except $A_1$.
Part (b) of the lemma follows from Lemma~\ref{lem:1-dim-reactants}, so below we verify part (a).

 By hypothesis, there exist 
      positive rate constants 
            $k_1^*, k_2^*, \dots, k_r^*$ 
      for which $G$ admits a positive steady state $\mathbf{x}^* = (x_1^*, x_2^*, \dots, x_s^*)$.  
    We specialize the ODE for $x_1$, as follows:
    \begin{align} \label{eq:ODE-specialized} 
    \notag
        \frac{dx_1}{dt}|_{x_2=x_2^*, \dots, x_s=x_s^*}
        ~&=~
        k_1 (y'_{11} - y_{11}) x_1^{y_{11}} (x_2^*)^{y_{12}} \dots  (x_s^*)^{y_{1s}}
        + \dots +
        k_r (y'_{r1} - y_{r1}) x_1^{y_{r1}} (x_2^*)^{y_{r2}} \dots  (x_s^*)^{y_{rs}}  \\
        &=~ 
       \widetilde{k_1} (y'_{11} - y_{11}) x_1^{y_{11}} 
       + \dots +
       \widetilde{k_r} (y'_{r1} - y_{r1}) x_1^{y_{r1}} ~,
    \end{align}
where we have defined new parameters via 
$\widetilde{k_i} =k_i (x_2^*)^{y_{i2}} \dots  (x_s^*)^{y_{is}} $, for $i=1,2,\dots, r$ (the product $(x_2^*)^{y_{i2}} \dots  (x_s^*)^{y_{is}} $ is positive because $x^*$ is a positive steady state).
The expression in~\eqref{eq:ODE-specialized} 
is exactly the right-hand side of the ODE for the embedded network $N$
(except that a summand arising from a reaction $z \to z'$ in $N$ might appear more than once if more than one reaction of $G$ projects $z \to z'$).
We observe that $N$ admits a positive steady state, 
because $x_1^*$ is a positive root of the right-hand side of~\eqref{eq:ODE-specialized} when 
the values of the new parameters are 
$\widetilde{k^*_i} := {k^*_i} (x_2^*)^{y_{i2}} \dots  (x_s^*)^{y_{is}} $ (for $i=1,2,\dots, r$).  
It is also straightforward to show that, because $G$ has 
ACR (respectively, stable ACR) in $A_1$, then $N$ does too 
(the fact that $A_1$ is not a catalyst-only species ensures that~\eqref{eq:ODE-specialized} is not identically zero). 
Also, $N$ has at least one reaction (because $A_1$ is not a catalyst-only species).
Thus, 
by Proposition~\ref{prop:1-species} (respectively, Proposition~\ref{prop:1-species-stable}), 
the network $N$ has the properties described in part (a) (respectively, the arrow diagram of $N$ has one of the four forms in~\eqref{eq:4-arrow-diagram-forms-1-d}).
\end{proof}

We will show that a partial converse to Lemma~\ref{lem:1-dim} holds
(Theorem~\ref{thm:1-dim-stable}), but first we need a lemma that pertains to catalyst-only species.  
We saw earlier that the degenerate-ACR network has ACR in a catalyst-only species, but all steady states are degenerate (Example~\ref{ex:daniele-3}).  This phenomenon holds more generally, as the following lemma attests.

\begin{lemma}[ACR in a catalyst-only species] \label{lem:cat-only-and-ACR}
If a one-dimensional reaction network $G$ has ACR in some catalyst-only species $A_{i^*}$, then, for all choices of positive rate constants, every positive steady state is degenerate and so $G$ does \uline{not} have stable ACR in $A_{i^*}$.
\end{lemma}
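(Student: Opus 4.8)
The plan is to exploit the rank-one structure that one-dimensionality forces on the Jacobian, reducing nondegeneracy of a positive steady state to the nonvanishing of a single scalar that the catalyst-only and ACR hypotheses will jointly kill. First I would fix a nonzero vector $v$ spanning the one-dimensional stoichiometric subspace $S$. Since every reaction vector is a scalar multiple of $v$, the mass-action right-hand side factors as $f_{\kappa}(x) = g(x)\, v$ for a single scalar polynomial $g \in \mathbb{R}[x]$ (namely $g(x)=\sum_{y\to y'\in\RR}\kappa_{y\to y'}c_{y\to y'}\mathbf{x}^{y}$, where $y'-y = c_{y\to y'}v$). The Jacobian is then the rank-$\le 1$ matrix $df_{\kappa}(x) = v\,(\nabla g(x))^{\top}$, so $df_{\kappa}(x^*)v = (\nabla g(x^*)\cdot v)\, v$, and because $S=\mathrm{span}(v)$, the image $\mathrm{Im}(df_{\kappa}(x^*)|_{S})$ equals $S$ exactly when the scalar $\lambda(x^*) := \nabla g(x^*)\cdot v$ is nonzero. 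Thus a positive steady state $x^*$ is nondegenerate iff $\lambda(x^*)\neq 0$. Moreover, since $A_{i^*}$ is catalyst-only (Definition~\ref{def:cat-only}), every reaction vector has vanishing $i^*$-th coordinate, so $v_{i^*}=0$ (consistent with Lemma~\ref{lem:cat-only-species}). It therefore suffices to show $\lambda(x^*) = 0$ at every positive steady state $x^*$.

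Next I would bring in ACR to locate the positive steady-state locus $V^{+}=\{x\in\mathbb{R}^{s}_{>0} : g(x)=0\}$ inside the hyperplane $\{x_{i^*}=c^*\}$, where $c^*$ is the ACR value. Fix a positive steady state $x^*$ and argue by a dichotomy on $\nabla g(x^*)$. If $\nabla g(x^*)=0$, then $\lambda(x^*)=0$ and $x^*$ is degenerate. Otherwise $x^*$ is a smooth point of the hypersurface $\{g=0\}$; since $\mathbb{R}^{s}_{>0}$ is open, a neighborhood of $x^*$ within $\{g=0\}$ already lies in $V^{+}$, hence in $\{x_{i^*}=c^*\}$. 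This places an $(s-1)$-dimensional smooth piece of $\{g=0\}$ inside the $(s-1)$-dimensional hyperplane $\{x_{i^*}=c^*\}$, so their tangent spaces at $x^*$ coincide; equivalently $\nabla g(x^*)$ is parallel to the standard basis vector $e_{i^*}$. Then $\lambda(x^*)=\nabla g(x^*)\cdot v$ is a scalar multiple of $e_{i^*}\cdot v = v_{i^*}=0$. In both cases $\lambda(x^*)=0$, so $x^*$ is degenerate, which establishes the first assertion of the lemma for all positive rate constants.

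Finally, since stability is defined (and attainable) only at nondegenerate steady states, a degenerate positive steady state is never exponentially stable; hence no mass-action system $(G,\kappa)$ admitting a positive steady state can have all its positive steady states stable, and so $G$ does not have stable ACR in $A_{i^*}$ (the uninteresting vacuous case with no positive steady states being excluded, cf. Remark~\ref{rem:no-pos-steady-states}). The hard part will be the geometric step of the second paragraph: one must justify carefully that at a smooth positive steady state the local zero set of $g$ is genuinely $(s-1)$-dimensional and contained in the ACR-hyperplane, so that equality of dimensions forces $\nabla g(x^*)\parallel e_{i^*}$. The dichotomy on $\nabla g(x^*)$ is precisely what lets this argument also cover the singular steady states, where $\nabla g(x^*)=0$ makes $\lambda(x^*)=0$ automatic, so no separate genericity hypothesis is needed.
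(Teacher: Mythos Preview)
Your proof is correct and takes a genuinely different route from the paper's. The paper first invokes Lemma~\ref{lem:1-dim-reactants} (whose proof is a nontrivial perturbation argument) to conclude that all reactant complexes differ only in species $A_{i^*}$; this lets each ODE be written as $x_2^{n_2}\cdots x_s^{n_s}\,g(x_1)$, and then a direct inspection shows that columns $2,\dots,s$ of the Jacobian vanish at any positive steady state. Your argument bypasses Lemma~\ref{lem:1-dim-reactants} entirely: you use only the rank-one factorization $f_\kappa=g\,v$ together with the geometric constraint that ACR traps the smooth part of $\{g=0\}\cap\mathbb{R}^s_{>0}$ inside the hyperplane $\{x_{i^*}=c^*\}$, forcing $\nabla g(x^*)\parallel e_{i^*}$ and hence $\nabla g(x^*)\cdot v=0$ since $v_{i^*}=0$. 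This is more self-contained and conceptually cleaner for the lemma at hand; the trade-off is that the paper's route, via Lemma~\ref{lem:1-dim-reactants}, extracts additional structural information (the shape of the reactant complexes) that is reused in the proof of Theorem~\ref{thm:1-dim-stable}, whereas your argument yields only what is needed here. The implicit-function-theorem step you flag as ``the hard part'' is entirely standard once $\nabla g(x^*)\neq 0$, and your dichotomy handles the singular case cleanly.
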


\begin{proof}
Let $s$ and $r$ denote the numbers of species and reactions, respectively.  By relabeling, if necessary, we may assume that $i^*=1$. %, that is, $G$ has ACR in species $A_1$, which is a catalyst-only species.  
%We may also assume that $G$ has at least one reaction (otherwise the lemma holds vacuously).

Fix a choice of positive rate constants $\kappa \in \mathbb{R}^r_{>0}$.  Let ${\mathbf{x}^*}= (x^*_1, x^*_2, \dots, x^*_s) \in \mathbb{R}_{>0}^s$ be a positive steady state of the mass-action system $(G,\kappa)$.  We must show that ${\mathbf{x}^*}$ is a degenerate steady state. 
As the stoichiometric subspace $S$ is one-dimensional, it suffices to show the following equality 
(where $ \mathbf{0}$ denotes the zero vector in $\mathbb{R}^s$):
\begin{align} \label{eq:degenerate}
    {\rm Im}\left( df_{\kappa} (\mathbf{x}^*)|_{S} \right) ~=~ \{ \mathbf{0} \} ~.
\end{align}
%where ${df_{\kappa}(\mathbf{x}^*)}$ is the Jacobian matrix of $f_{\kappa}$ at $\mathbf{x}^*$. 

By hypothesis, $A_1$ is a catalyst-only species.  So, by Lemma~\ref{lem:cat-only-species}, the mass-action ODE for $A_1$ is $\frac{dx_1}{dt} = 0$.  Hence, for all $z \in S$, we have $z_1 =0$.  So, to prove the equality~\eqref{eq:degenerate}, we need only show that columns $2,3,\dots, s$ of the matrix $df_{\kappa} (\mathbf{x}^*)$ are zero columns.

We first examine row-1 of $df_{\kappa} (\mathbf{x}^*)$.  The corresponding ODE, as we saw, is $\frac{dx_1}{dt} = 0$.  So, row-1 of the matrix is the zero row.

Our next task is to analyze one of the remaining rows, that is, row-$j$ of $df_{\kappa} (\mathbf{x}^*)$, where $2 \leq j \leq s$.  As $G$ is one-dimensional and has ACR in $A_1$, Lemma~\ref{lem:1-dim-reactants} implies that the reactant complexes of $G$ differ only in species $A_1$.  Hence, the mass-action ODE for species $A_j$ is either 0 (in which case row-$j$ is, as desired, the zero row) or can be written as:
\begin{align} \label{eq:ode-for-species-j}
    \frac{dx_j}{dt} ~=~ x_2^{n_2} x_3^{n_3} \dots x_s ^ {n_s} ~ g(x_1)~,
\end{align}
for some $n_2,n_3,\dots, n_s \in \mathbb{Z}_{\geq 0}$ and $g \in \mathbb{R}[x_1]$.  By construction,  ${x}_1^*$ is a positive root of the right-hand side of~\eqref{eq:ode-for-species-j} and so is a root of $g$.

Equation~\eqref{eq:ode-for-species-j} implies that in 
row-$j$ of the Jacobian matrix $df_{\kappa} (x)$ (before evaluating at $\mathbf{x}^*$), the $k$-th entry, for $2 \leq k \leq s$, is either 0 or a polynomial multiple of $g(x_1)$. Hence, after evaluating at $\mathbf{x}^*$, these entries all become $0$. %$g(\mathbf{x}_1^*)=0$. 
We conclude that, as desired, columns $2,3,\dots,s$ of $df_{\kappa} (\mathbf{x}^*)$ are zero.
\end{proof}

\begin{theorem}[Stable ACR for one-dimensional networks] \label{thm:1-dim-stable}
 Let $G$ be a network with species $A_1, A_2, \dots, A_s$.
 If the stoichiometric subspace of $G$ is one-dimensional, then the following are equivalent: 
 \begin{enumerate}
 \item $G$ has stable ACR and admits a positive steady state, and
 \item there exists a species $A_{i^*}$ such that the following holds:
	\begin{enumerate} 	
	\item for the embedded network of $G$ obtained by removing all species except $A_{i^*}$, 
	 the arrow diagram has one of these four forms:
    \begin{align}  \label{eq:4-diagrams-again}
         (\lradot,~ \la, \la, \dots, \la)~, 
         &
        \quad 
        (\to, \to, \dots, \to, ~\lradot) ~, 
        \\
        \notag
        (\to, \to, \dots, \to, ~ \lradot, ~ \la, \la, \dots, \la)~,
        & \quad \quad 
        {\textrm or} \quad
        (\to, \to, \dots, \to, ~ \la, \la, \dots, \la)~, 
    \end{align}
	\item the reactant complexes of $G$ differ only in species $A_{i^*}$ 
(that is, if $y$ and $\widetilde y$ are both reactant complexes of $G$, then $y_i=\widetilde{y}_i$ for all $i \neq i^*$).
%	 the embedded network of $G$ obtained by removing all species except $A_i$
%	 either is trivial (that is, has no reactions) 
%	 or has arrow diagram $\lradot$.
	\end{enumerate}
 \end{enumerate}
\end{theorem}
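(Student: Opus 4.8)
The plan is to prove the two implications separately: the forward direction $(1)\Rightarrow(2)$ is essentially a repackaging of the two preceding lemmas, while the reverse direction $(2)\Rightarrow(1)$ requires a direct analysis of the factored mass-action ODEs.

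For $(1)\Rightarrow(2)$, suppose $G$ has stable ACR in some species $A_{i^*}$ and admits a positive steady state. Since stable ACR forces every positive steady state to be nondegenerate, Lemma~\ref{lem:cat-only-and-ACR} shows that $A_{i^*}$ cannot be a catalyst-only species. Hence Lemma~\ref{lem:1-dim} applies directly: its part (b) gives condition (2b), and its ``moreover'' clause (the stable-ACR case) gives that the arrow diagram of the embedded network $N$ obtained by removing all species except $A_{i^*}$ has one of the four forms in~\eqref{eq:4-diagrams-again}, which is (2a).

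For $(2)\Rightarrow(1)$, after relabeling I assume $i^*=1$. Because the arrow diagram of $N$ is one of the four forms, $N$ has at least one reaction, so some reaction of $G$ changes $A_1$; as $G$ is one-dimensional with reaction-vector direction $v=(v_1,\dots,v_s)$, this forces $v_1\neq 0$. Condition (2b) says every reactant complex shares the same coordinates $(m_2,\dots,m_s)$ away from $A_1$, so every reactant monomial factors as a power of $x_1$ times $M:=x_2^{m_2}\cdots x_s^{m_s}$, and each reaction vector equals a scalar multiple of $v$. Writing $y'-y=c_{y\to y'}\,v$ and collecting terms, the mass-action ODEs take the factored form $\dot x_j = v_j\,M\,P(x_1)$ for all $j$, where $P(x_1):=\sum_{y\to y'}\kappa_{y\to y'}\,c_{y\to y'}\,x_1^{y_1}$. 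Since $M>0$ on the positive orthant and $v\neq 0$, the positive steady states of $(G,\kappa)$ are exactly the points with $P(x_1)=0$; moreover $v_1 P$ is precisely the mass-action right-hand side $f_N$ of the one-species network $N$ equipped with its (positive) induced rate constants. Applying Proposition~\ref{prop:1-species-stable} to $N$ via (2a), the network $N$ has stable ACR and admits a positive steady state. Stable ACR of $N$ gives that $P$ has at most one positive root $c$ for each induced rate-constant vector, so all positive steady states of $(G,\kappa)$ share $x_1=c$, which is ACR of $G$ in $A_1$; and since every positive vector of induced rate constants for $N$ arises from some positive $\kappa$ for $G$, I can realize a rate-constant choice producing a root $c$, whence $(c,1,\dots,1)$ is a positive steady state of $(G,\kappa)$.

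The stability claim is the step I expect to be the main obstacle, and I would handle it through the rank-one structure of the Jacobian. Writing $F(x):=M\,P(x_1)$, the right-hand side is $f_\kappa(x)=v\,F(x)$, so $df_\kappa(x^*)=v\,\nabla F(x^*)^{T}$ is a rank-one outer product whose unique nonzero eigenvalue is $\nabla F(x^*)\cdot v$. At a positive steady state $x^*$ with $P(x_1^*)=0$, every partial $\partial F/\partial x_j$ with $j\geq 2$ carries a factor $P(x_1^*)=0$ and so vanishes, leaving $\nabla F(x^*)\cdot v = v_1\,M(x^*)\,P'(x_1^*) = M(x^*)\,f_N'(x_1^*)$. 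Because $N$ has stable ACR, $x_1^*$ is a stable steady state of the one-species system $N$, i.e.\ $f_N'(x_1^*)<0$; since $M(x^*)>0$, the nonzero eigenvalue is negative. This simultaneously shows that $x^*$ is nondegenerate (the image of $df_\kappa(x^*)|_S$ is all of $S=\mathrm{span}(v)$) and exponentially stable, for every $\kappa$ and every positive steady state, completing the proof; the cases in which $P$ has no positive root contribute only vacuous (hence harmless) instances of ACR and stability.
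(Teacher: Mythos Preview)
Your proof is correct and follows essentially the same route as the paper's: both directions use the same lemmas for $(1)\Rightarrow(2)$, and for $(2)\Rightarrow(1)$ both factor the mass-action ODEs using condition~(2b), reduce to the one-species embedded network $N$, and invoke Proposition~\ref{prop:1-species-stable}. The only difference is that the paper dismisses the transfer of stability from $N$ back to $G$ as ``straightforward,'' whereas you spell it out via the rank-one Jacobian $df_\kappa(x^*)=v\,\nabla F(x^*)^{T}$ and the identity $\nabla F(x^*)\cdot v = M(x^*)\,f_N'(x_1^*)$; this is a genuine clarification rather than a different method.
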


\begin{proof}
The implication $1 \Rightarrow 2$ follows from Lemmas~\ref{lem:1-dim} and~\ref{lem:cat-only-and-ACR}.
Now we prove $2 \Rightarrow 1$. Denote the reactions of $G$ by $y_i \to y_i'$, with rate constant $k_i$, for $i=1,2,\dots, r$.  By relabeling, if necessary, we may assume that $i^*=1$.  By assumption, 
    the stoichiometric coefficient for $A_2$ (respectively, for $A_3, \dots, A_s$)
    is the same in all reactant complexes.  We denote these coefficients by
    $z_2, z_3, \dots, z_s$, respectively. 
    Thus, we can rewrite each monomial $x^{y_i}$, for $i=1,2,\dots, r$, as $x^{y_i}=x_1^{y_{i1}} (x_2^{z_2} x_3^{z_3} \dots x_s^{z_s} )$.  The right-hand side of the ODE for $x_1$ can now be written as follows:
    \begin{align} \label{eq:ode-x-1}
   % \frac{dx_1}{dt} ~=~
        (x_2^{z_2} x_3^{z_3} \dots x_s^{z_s} ) ~
            \left(
            k_1 (y'_{11} -y_{11}) x_1^{y_{11}}
            +
            k_2 (y'_{21} -y_{21}) x_1^{y_{21}}
            +
            \dots 
            +
            k_r (y'_{r1} -y_{r1}) x_1^{y_{r1}}
            \right)~.
    \end{align}
    The expression~\eqref{eq:ode-x-1} is not identically zero, because of the form~\eqref{eq:4-diagrams-again} of the embedded network, which we denote by $N$, obtained by removing all species except $A_1$.  Hence, as $G$ is one-dimensional, the right-hand sides of the remaining $(s-1)$ ODEs are scalar multiples of~\eqref{eq:ode-x-1}.  We conclude that the positive steady states of $G$ correspond to the positive roots of the following polynomial in $x_1$ (the monomial factor in~\eqref{eq:ode-x-1} can be disregarded): 
    %when looking for positive roots):
    \begin{align} \label{eq:poly-1-vble}
                    k_1 (y'_{11} -y_{11}) x_1^{y_{11}}
            +
            k_2 (y'_{21} -y_{21}) x_1^{y_{21}}
            +
            \dots +
            k_r (y'_{r1} -y_{r1}) x_1^{y_{r1}}~.
    \end{align}
    This polynomial~\eqref{eq:poly-1-vble} is exactly
    the right-hand side of the ODE of the (one-species) embedded network $N$ (except that a summand might appear more than once if more than one reaction of $G$ projects to some reaction in $N$), 
    and $N$ has arrow diagram one of those in~\eqref{eq:4-diagrams-again}.  So, by Proposition~\ref{prop:1-species-stable}, $N$ has stable ACR (in $A_1$) and admits a positive steady state.  It is straightforward to show now that the same is true for $G$.  
    \end{proof}
\begin{example}[Example~\ref{ex:gen-canonical-network}, continued] \label{ex:gen-canonical-network-2}
We saw earlier that the generalized Shinar--Feinberg network is one-dimensional and admits a positive steady state, and the embedded network obtained by $A$ (respectively, $B$) has arrow diagram $(\lradot)$ (respectively, $(\to, \leftarrow)$).  
Thus, Theorem~\ref{thm:1-dim-stable} implies that $G$ has stable ACR. 
\end{example}

We saw earlier that one-reaction networks do not admit positive steady states (Remark~\ref{rem:no-pos-steady-states}). On the other hand, two-reaction networks that admit a positive steady state must be one-dimensional (Lemma~\ref{lem:2-rxn-steady-state-implies-1-d}), and so the above results apply.  We pursue this topic in the next section.

\section{Networks with two reactions} \label{sec:2-species-2-rxns}
This section focuses on networks with two reactions.  
Some of these networks, like $B \lra 2A + B$, 
contain catalyst-only species.
%are essentially one-species networks (more precisely, obtained by translation from a one-species network, like $0 \lra 2A$) {\color{red} need to update this}.  
Our first result characterizes ACR for such networks, when there are only two species (Proposition~\ref{prop:2-rxn-cat-only}). 
%(Corollary~\ref{cor:translate-1-species-2-reactions}).  
Next, we focus on the remaining 2-species, 2-reaction networks, 
and 
show that
up to network operations, the only networks with ACR 
are the generalized Shinar--Feinberg networks from Example~\ref{ex:generalized-shinar--Feinberg} (Theorem~\ref{thm:main-result-2-rxn-2-species}).  
It follows that, up to network operations, there are only three families of two-species, two-reaction networks (possibly containing catalyst-only species) with ACR (Corollary~\ref{cor:2-rxn-2-species}).  
More generally, when there are two reactions (and any number of species, but no catalyst-only species) ACR is completely characterized by the Shinar--Feinberg criterion (Theorem~\ref{thm:2-rxn}).  

\subsection{Networks with catalyst-only species}

\begin{proposition}[ACR for networks with 2 species, 2 reactions, and a catalyst-only species] \label{prop:2-rxn-cat-only}
Let  $G$ be a reaction network 
with exactly two species and two reactions, such that one of the species is a catalyst-only species.  
Assume that $G$ admits a positive steady state.
Then the following are equivalent:
      \begin{enumerate}
            \item $G$ has ACR,
            \item $G$ can be obtained by network operations (Definition~\ref{def:operations})
            %, excluding duplicating reactions, 
            from the network $\{0 \lra mA \}$, for some $m \geq 1$, 
            or from the degenerate-ACR network 
    $\{ A \to 2A, ~ A+ nB \to nB \}$, for some $n \geq 1$.            
        \end{enumerate}
\end{proposition}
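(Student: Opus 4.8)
The plan is to dispatch $(2)\Rightarrow(1)$ at once and then put the real work into $(1)\Rightarrow(2)$, reducing everything to the two-monomial ODE of the network and splitting on which of the two species carries the ACR. For $(2)\Rightarrow(1)$: the network $\{0 \lra mA\}$ has ACR (Example~\ref{ex:acr-but-not-shinar--Feinberg}) and the degenerate-ACR network $\{A \to 2A,\ A+nB \to nB\}$ has ACR (Example~\ref{ex:daniele-3}); since network operations preserve ACR (Theorem~\ref{thm:operations-ACR}), any $G$ obtained from either of these by operations also has ACR.

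For $(1)\Rightarrow(2)$ I would first normalize. After relabeling species (an ACR-preserving operation) I may assume $B$ is the catalyst-only species and $A$ is the other one; note both species cannot be catalyst-only, as that would force every reaction vector to vanish, so $A$ is not catalyst-only. Because $G$ admits a positive steady state, Lemma~\ref{lem:2-rxn-steady-state-implies-1-d} shows $G$ is one-dimensional, and since $B$ is catalyst-only (so $dx_B/dt=0$ by Lemma~\ref{lem:cat-only-species}) I can write the two reactions as $a_iA + p_iB \to b_iA + p_iB$ for $i=1,2$, giving $dx_A/dt = \kappa_1(b_1-a_1)x_A^{a_1}x_B^{p_1} + \kappa_2(b_2-a_2)x_A^{a_2}x_B^{p_2}$. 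Existence of a positive steady state forces the two reactant monomials to be distinct (if they coincide the right-hand side is a single monomial, which vanishes on a positive point only for a codimension-one set of $\kappa$, and for such $\kappa$ \emph{every} positive point is a steady state, so $G$ has no ACR) and forces the two coefficients to have opposite signs; after renaming, $b_1>a_1$ and $b_2<a_2$, and the positive steady-state locus is exactly $\{x_A^{a_2-a_1}x_B^{p_2-p_1}=C\}$ with $C=\kappa_1(b_1-a_1)/(\kappa_2(a_2-b_2))>0$. Reading off this locus shows that ACR holds in $A$ iff $p_1=p_2$ (forcing $a_1\ne a_2$) and in $B$ iff $a_1=a_2$ (forcing $p_1\ne p_2$); in particular ACR occurs in exactly one species.

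If the ACR is in $A$, then since $A$ is not catalyst-only, Lemma~\ref{lem:1-dim}(b) (equivalently Lemma~\ref{lem:1-dim-reactants}) gives $p_1=p_2=:p$, so $G$ is the translation by $pB$ of the one-species, two-reaction network $\{a_1A\to b_1A,\ a_2A\to b_2A\}$ (with $B$ a silent species, as permitted by Definition~\ref{def:1-d-1-species}); Corollary~\ref{cor:translate-1-species-2-reactions} then yields that $G$ is obtained by operations from $\{0 \lra mA\}$, the first family. If the ACR is in $B$, the computation above gives $a_1=a_2=:a\ge 1$ and $p_1\ne p_2$, with the $A$-increasing and $A$-decreasing reactions sharing the reactant $A$-coefficient $a$. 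Here I would exhibit an explicit reversible chain of operations carrying $G$ to $\{A\to 2A,\ A+nB\to nB\}$: translate far enough into the orthant (Remark~\ref{rem:negate}) and, if needed, partial-scale species $A$ by $-1$ so that the $A$-increasing reaction carries the smaller $B$-coefficient; stretch each reaction so that its net change in $A$ is $\pm 1$; and finally translate so that the common reactant $A$-coefficient becomes $1$ and the smaller $B$-coefficient becomes $0$, landing on the degenerate-ACR network with $n=|p_1-p_2|\ge 1$. As translation, stretching, and partial scaling are reversible and ACR-preserving, $G$ is obtained from the degenerate-ACR network, the second family.

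I expect the main obstacle to be this last case: correctly choreographing the partial-scaling, stretching, and translation steps while verifying at each stage that the stoichiometric coefficients stay nonnegative integers, together with the careful use of the positive-steady-state hypothesis to rule out coincident reactant monomials and to pin down the opposite-sign coefficients and hence the precise shape of the steady-state locus.
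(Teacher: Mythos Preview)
Your proof is correct and follows essentially the same approach as the paper: after relabeling so $B$ is the catalyst-only species, you split into the cases ``ACR in $A$'' (leading via Corollary~\ref{cor:translate-1-species-2-reactions} to $\{0\lra mA\}$) and ``ACR in $B$'' (leading by explicit operations to the degenerate-ACR network), which is exactly the paper's dichotomy between the embedded network $N$ having two versus one reactant complexes. The only stylistic difference is that you read off the ACR species directly from the binomial steady-state locus $x_A^{a_2-a_1}x_B^{p_2-p_1}=C$, whereas the paper phrases the same analysis via arrow diagrams and Lemma~\ref{lem:1-dim-reactants}; both routes are equivalent and the chain of operations you describe for the degenerate-ACR case matches the paper's (stretch to unit length in $A$, partial-scale $A$ by $-1$ if needed, then translate).
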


\begin{proof}
We have already seen that $2 \Rightarrow 1$ holds; see Corollary~\ref{cor:translate-1-species-2-reactions},   Example~\ref{ex:daniele-3}, and Theorem~\ref{thm:operations-ACR}. 

Now we prove $1 \Rightarrow 2$.  Let $y\to y'$ and $z \to z'$ denote the two reactions of $G$.  
By relabeling species, if necessary, we may assume that $B$ is a catalyst-only species (that is, $y_B=y'_B$ and $z_B=z'_B$).  Let $N$ be the embedded network obtained by removing species $B$.  
If $N$ has no reactions, then $A$ too is a catalyst-only species and so $G$ has no reactions, which is a contradiction.  So, $N$ has either one or two reactant complexes.  We consider these two cases separately.

Assume that $N$ has only one reactant complex.  So, the arrow diagram of $N$ is $(\to)$, $(\la)$, or $(\lradot)$.   The first two arrow diagrams are impossible (it is straightforward to see that either would imply that $G$ does not admit positive steady states).  So, the arrow diagram must be $(\lradot)$.  This means that $y_A=z_A$ and $y'_A - y_A$ and 
$z'_A - z_A$ are both nonzero and have opposite signs.

We consider two subcases, based on whether $y_B = z_B$ or $y_B \neq z_B$.  First assume that $y_B=z_B$.  This equality, together with 
$y_A=z_A$, imply that 
the mass-action ODE for $A$ can be written as follows:
\begin{align} \notag
    \frac{dx_A}{dt} ~&=~ \kappa_1 (y_A' - y_A) x_A^{y_A} x_B^{y_B} + \kappa_2   (z_A' - z_A) x_A^{z_A} x_B^{z_B} \\
                    ~&=~ \left( \kappa_1 (y_A' - y_A)  + \kappa_2   (z_A' - z_A) \right) x_A^{y_A} x_B^{y_B}~.
                    \label{eq:ode-for-A-subcase}
\end{align}
Now choose $\kappa_1^*:=1$ and $\kappa_2^*:=-(y_A'-y_A)/(z_A'-z_A)$ (which is positive, as we observed earlier that $y'_A - y_A$ and 
$z'_A - z_A$ have opposite signs).  It follows that, for these rate constants $\kappa_1^*, \kappa_2^*$, the right-hand side of~\eqref{eq:ode-for-A-subcase} is 0, and so every $(x_A,x_B) \in \mathbb{R}^s_{>0}$ is a positive steady state.  Hence, $G$ does not have ACR, which is a contradiction.

The remaining subcase is when $y_B \neq z_B$.  We may assume that $y_B < z_B$.  Let $n:= z_B - y_B$.  We will perform operations on $G$ and obtain a degenerate-ACR network.  We begin by stretching each reaction so the reaction vector has length $1$.  Next, if $y_A'< y_A$, then perform a partial scaling of species $A$ by a factor of $-1$.  The resulting reactions are $(y_A, y_B) \to (y_A+1, y_B)$ and $(y_A, y_B+n) \to (y_A - 1, y_B+n)$.  Now translate left by $y_A-1$ and down by $y_B$ to obtain the degenerate-ACR network $\{ A \to 2A, ~ A+ nB \to nB \}$.  

Now consider the remaining case, when $N$ has exactly two reactant complexes (that is, $x_A \neq y_A$). 
Lemma~\ref{lem:1-dim-reactants} implies that $x_B = y_B$.  This equality, together with the fact that $B$ is a catalyst-only species, implies that $G$ is obtained by translation from a 1-species network.  So, by Corollary~\ref{cor:translate-1-species-2-reactions}, we can perform network operations to obtain
$\{0 \lra mA \}$, for some $m \geq 1$, as desired.
\end{proof}

\subsection{Networks with no catalyst-only species}

\begin{theorem}[ACR for networks with 2 species, 2 reactions, and no catalyst-only species]
    \label{thm:main-result-2-rxn-2-species}
    Let $G$ be a reaction network with exactly two species ($A_1$ and $A_2$) and two reactions. Assume that $G$ 
    admits a positive steady state
    and 
    has no catalyst-only species. 
    Let $N_1$ (respectively, $N_2$) denote the embedded network of $G$ obtained by removing species $A_1$ (respectively, $A_2$). 
    Then the following are equivalent:
    \begin{enumerate}
        \item $G$ has ACR;
        \item one of $N_1$ and $N_2$ has arrow diagram $(\lradot)$, 
        and the other has arrow diagram either $(\to, \leftarrow)$ or $(\leftarrow, \to)$;
        %            
        %$G$ is one-dimensional, and 
%       there exists a species $X_{i^*}$ such that the following holds:
%        \begin{enumerate}
%        \item     
%        for the embedded network of $G$ obtained by removing 
%    the non-$X_{i^*}$ species, 
%	 the arrow diagram has the form $(\to, \leftarrow)$ or $(\leftarrow, \to)$, and
%    \item for the other species $X_i$ with $i \neq i^*$, 
%	the stoichiometric coefficient is the same in both reactants, that is,
%	if $y$ and $\widetilde y$ are the two reactant complexes of $G$, then $y_i=\widetilde{y}_i$;
%        \end{enumerate}
    \item $G$ can be obtained by network operations (Definition~\ref{def:operations}) from 
    a generalized Shinar--Feinberg network $\{ B \to A+2B, ~ nA+B \to (n-1)A \}$, for some $n \geq 1$; and
        \item $G$ satisfies the Shinar--Feinberg criterion (Proposition~\ref{prop:shinar--Feinberg-criterion-network}).
        %for all choices of positive rate constants $\kappa^* \in \mathbb{R}^2_{>0} $,
        %the mass-action system $(G, \kappa^*)$ satisfies the Shinar--Feinberg criterion (Proposition~\ref{prop:shinar--Feinberg-criterion}).
    \end{enumerate}
Moreover, stable ACR corresponds to the ``$(\to , \leftarrow)$'' case from part 2.
\end{theorem}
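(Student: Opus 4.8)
The plan is to prove the cycle $1\Rightarrow 2\Rightarrow 4\Rightarrow 1$, which already yields $1\Leftrightarrow 2\Leftrightarrow 4$, and then to fold in condition~3 via $2\Rightarrow 3$ and $3\Rightarrow 1$. Throughout I use that $G$, having two reactions and a positive steady state, is one-dimensional by Lemma~\ref{lem:2-rxn-steady-state-implies-1-d}, and that, since $G$ has no catalyst-only species, each of $N_1$ and $N_2$ has at least one reaction (Lemma~\ref{lem:cat-only-species}). Each $N_i$ is then a one-species network with one or two reactions.

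For $1\Rightarrow 2$, suppose $G$ has ACR; after relabeling, assume the ACR species is $A_1$ (which is not catalyst-only). By Lemma~\ref{lem:1-dim-reactants}, all reactant complexes of $G$ share the same $A_2$-coordinate, so $N_1$ has a single reactant complex and hence a length-one arrow diagram. I would rule out $(\to)$ and $(\la)$ by observing that each forces $\tfrac{dx_2}{dt}$ to keep a constant sign on $\mathbb{R}^2_{>0}$, contradicting the positive steady state; this leaves $N_1=(\lradot)$. For $N_2$ (the embedded network in the ACR species $A_1$), Lemma~\ref{lem:1-dim}(a) forbids the form $(\lradot)$, so $N_2$ cannot have a single reactant complex (a single reactant with two reactions would give $(\to)$ or $(\la)$, and hence no positive root, or else $(\lradot)$); thus $N_2$ has two distinct reactant complexes, each carrying exactly one reaction, giving a length-two diagram with no $\lradot$. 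Applying Descartes' rule as in Proposition~\ref{prop:1-species-stable} eliminates $(\to,\to)$ and $(\la,\la)$, so $N_2\in\{(\to,\la),(\la,\to)\}$, which is condition~2.

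For $2\Rightarrow 4$, under condition~2 the reactant complexes of $G$ differ only in the species carrying the length-two diagram, and the $(\lradot)$ diagram forces the two product complexes to lie on opposite sides of the common reactant coordinate; a short check then shows all four complexes are distinct, so $G$ has $4$ complexes, $2$ linkage classes, and deficiency $4-2-1=1$. Its two reactant complexes are non-terminal and differ in only one species, so the Shinar--Feinberg criterion holds, giving condition~4; and $4\Rightarrow 1$ is immediate from Proposition~\ref{prop:shinar--Feinberg-criterion-network}. To incorporate condition~3, $3\Rightarrow 1$ follows because the networks in condition~3 have ACR (Example~\ref{ex:gen-canonical-network} together with Theorem~\ref{thm:operations-ACR}), while $2\Rightarrow 3$ is the explicit normalization, mirroring the proofs of Propositions~\ref{prop:1-species-2-reactions} and~\ref{prop:2-rxn-cat-only}: stretch each reaction to equalize the lengths of the reaction vectors, apply partial scaling of the ACR species by $-1$ to reduce the $(\la,\to)$ case to $(\to,\la)$, and translate to place the reactants at their canonical positions, landing at $\{B\to A+2B,\ nA+B\to(n-1)A\}$. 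Finally, the ``moreover'' follows from Theorem~\ref{thm:1-dim-stable}: among length-two diagrams only $(\to,\la)$ matches one of the four stable forms in~\eqref{eq:4-diagrams-again}, whereas $(\la,\to)$ does not.

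I expect the main obstacle to be $1\Rightarrow 2$, where the analytic input -- existence of a positive steady state, Descartes' rule, and the one-dimensional Lemmas~\ref{lem:1-dim-reactants} and~\ref{lem:1-dim} -- must be combined while carefully tracking which species carries ACR and correctly matching it to the embedded networks $N_1$ and $N_2$. By comparison, $2\Rightarrow 4$ is elementary complex-counting, $4\Rightarrow 1$ and $3\Rightarrow 1$ are citations, and $2\Rightarrow 3$ is routine (if fiddly) normalization; a minor but essential point there is that, since the reaction count is fixed at two, duplication is never used, so Proposition~\ref{prop:canonical-deficiency} would equally give deficiency~$1$.
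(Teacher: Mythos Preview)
Your cycle $1\Rightarrow 2\Rightarrow 4\Rightarrow 1$ is sound, and proving $2\Rightarrow 4$ directly by checking that all four complexes are distinct (hence $\delta=4-2-1=1$) is a clean shortcut compared with the paper, which instead closes the loop via $3\Rightarrow 4$ using Proposition~\ref{prop:canonical-deficiency}. Your $1\Rightarrow 2$, your $3\Rightarrow 1$, and your handling of the ``moreover'' via Theorem~\ref{thm:1-dim-stable} are all fine.

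The gap is in $2\Rightarrow 3$. The three moves you list---equalize reaction lengths by stretching, flip the $(\la,\to)$ case to $(\to,\la)$ via partial scaling of the ACR species by $-1$, then translate---cannot in general reach a generalized Shinar--Feinberg network. After equalizing lengths the two reaction vectors are $\pm(a,b)$ with $a,b\neq 0$; stretching and translation never change the \emph{direction} of a reaction vector, and partial scaling by $-1$ only flips a sign, so if $|a|\neq 1$ or $|b|\neq 1$ you can never land on reaction vectors $\pm(1,\pm 1)$, which the target networks require. The analogy with Propositions~\ref{prop:1-species-2-reactions} and~\ref{prop:2-rxn-cat-only} is what misleads here: in those settings the reaction vectors are effectively one-dimensional (the second species is absent or catalyst-only), so no direction normalization is needed. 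The fix is exactly what the paper does: after equalizing lengths, partial scale $A_1$ by $1/(y'-y)_1$ and $A_2$ by $-1/(y'-y)_2$, which sends the reaction vectors to $\pm(1,-1)$ while fixing the reactants; only then translate. Once you add this step (and check, as the paper does, that the resulting coefficients stay nonnegative integers), your $2\Rightarrow 3$ goes through.
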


\begin{proof}
    Assume $G$ is a 2-reaction, 2-species network that admits a positive steady state and has no catalyst-only species.   Lemma~\ref{lem:2-rxn-steady-state-implies-1-d} implies that $G$ is a one-dimensional network. 

The implication $4 \Rightarrow 1$ is Proposition~\ref{prop:shinar--Feinberg-criterion-network}, so the rest of the proof proceeds by proving the implications $1 \Rightarrow 2 \Rightarrow 3 \Rightarrow 4$.

 We first prove $1 \Rightarrow 2$.  
        Assume that $G$ has ACR.  Then, by Lemma~\ref{lem:1-dim}, 
        one of the embedded networks $N_1$ and $N_2$ has arrow diagram 
         $(\to, \leftarrow)$ or $(\leftarrow, \to)$.
         Lemma~\ref{lem:1-dim} also implies that the other embedded network must have exactly one reactant complex (there must be at least one reaction, as $G$ has no catalyst-only species) and so has arrow diagram $(\lradot)$, $(\to)$, or $(\leftarrow)$.  However, the cases of $(\to)$ and $(\leftarrow)$ are ruled out, because otherwise (it is straightforward to see) $G$ would not admit a positive steady state.
        
  Next, we prove $2 \Rightarrow 3$.
        Assume $G$ satisfies condition 2 of the theorem, that is, 
        either $N_1$ or $N_2$ has arrow diagram $(\lradot)$, 
        and the other has arrow diagram $(\to, \leftarrow)$ or $(\leftarrow, \to)$.
        Recall that all operations, except duplicating a reaction, are reversible (Remark~\ref{rem:reverse-operation}).  So, it suffices to transform $G$ by operations to a generalized Shinar--Feinberg network.  To accomplish this, our first step is to relabel species, if necessary, to obtain a network in which $N_1$ has arrow diagram 
        $(\lradot)$ and $N_2$ has arrow diagram $(\to, \leftarrow)$ or $(\leftarrow, \to)$.
        Next, if the lengths of the reaction vectors are unequal, then rescale the longer reaction so that it has the same length as the shorter reaction. Notice that the arrow diagrams of $N_1$ and $N_2$ are unaffected.
        
        The fact that $N_2$ has arrow diagram $(\to, \leftarrow)$ or $(\leftarrow, \to)$ implies that we can write the reactions as $y\to y'$ and $z \to z'$, with $y_1<z_1$.  Let $n:=z_1-y_1$.  (We also have $y_2=z_2 \geq 1$, because $N_1$ has arrow diagram $\lradot$.)
        
        We claim that both components of the reaction vector of $y \to y'$, that is, $(y'-y)_1$ and $(y'-y)_2$, are nonzero.  Indeed, otherwise $N_1$ or $N_2$ would have at most one reaction, and this would contradict our hypothesis.  Thus, we can now perform
        partial scaling to species $A_1$ by a factor of $\frac{1}{(y'-y)_1}$ and then partial scaling to species $A_2$ by $\frac{-1}{(y'-y)_2}$. 
        
        By construction, our reactions, which we denote by $y\to \widetilde{y}'$ and $z \to \widetilde{z}'$, satisfy $\widetilde{y}' - y=(1,-1)$ and (as the reactions have the same length but point in opposite directions) $\widetilde{z}' - z= (-1,1)$.  We conclude that our reactions are as follows:
        \begin{align*}
            y_1 A_1 + y_2 A_2 ~ &\to (y_1+1)A_1 + (y_2-1) A_2\\
            (y_1+n) A_1 + y_2 A_2 ~ &\to (y_1+n-1)A_1 + (y_2+1) A_2~.
        \end{align*}
        An earlier observation, $y_2 \geq 1$, implies that the above reactions do not leave the non-negative orthant. 
        Finally, translating left by $y_1$ and down by $(y_2-1)$ yields $\{A_2 \to A_1~,~ nA_1+A_2 \to (n-1)A_1 + 2 A_2\}$, which is a generalized Shinar--Feinberg network.

We now prove $3 \Rightarrow 4$.   
        We saw in Example~\ref{ex:gen-canonical-network}
that every mass-action system arising from one of the 
generalized Shinar--Feinberg networks
$\{ B \to A+2B, ~ nA+B \to (n-1)A \}$, for $n \geq 1$, 
satisfies the 
Shinar--Feinberg criterion.  So, we need only show that after performing a sequence of network operations  (excluding duplicating, as we are considering only networks with two reactions) to a generalized Shinar--Feinberg network, (i) the deficiency is unchanged, and (ii) the two reactant complexes differ only in one of the two species.  First, (i) holds by 
Proposition~\ref{prop:canonical-deficiency}.  Also, (ii) is easy to verify: relabeling species simply swaps the role of the species, translation maintains the difference between the two reactant complexes, and stretching and partial scaling preserve the reactant complexes.

         Finally, we consider stability.  By Lemma~\ref{lem:1-dim}, 
         if $G$ has stable ACR, then we are in the case of ``$(\to , \leftarrow)$''.  And, conversely, it is straightforward to check that the ``$(\leftarrow, \to)$'' case yields a positive steady state that is unstable and hence precludes stable ACR.
\end{proof}

Theorem~\ref{thm:main-result-2-rxn-2-species} yields a geometric criterion for ACR, as follows.  Recall that an embedded network can be viewed as a projection of the geometric diagram onto a coordinate axis (Remark~\ref{rem:projection}).  Accordingly, 
condition 2 in Theorem~\ref{thm:main-result-2-rxn-2-species} 
requires that, up to relabeling species, the geometric diagram 
%of a reaction network $G$ with 2 species and two reactions (without catalysts-only species) 
has one of the forms shown in Figure~\ref{fig:ACR2spe2rxn}.  

\begin{figure}[ht]
    \centering
    \includegraphics[scale=0.42]{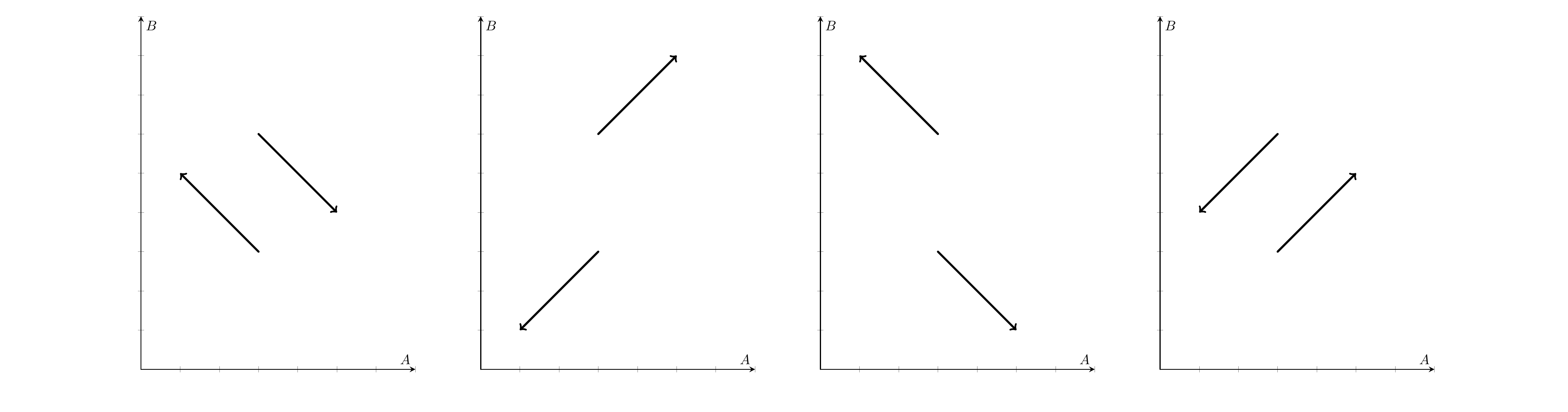}
    \caption{Possible geometric diagrams of a network with two species, two reactions, and no catalyst-only species, and having ACR in species $B$. The projection to $A$ yields the arrow diagram $( \protect \lradot )$, and the projection to $B$ yields $( \leftarrow, \to )$ or $( \to, \leftarrow)$.}
    \label{fig:ACR2spe2rxn}
\end{figure}

\begin{example} [Example~\ref{ex:1}, continued]
We revisit the network 
$\{ 3A + 5B \to A + 6B~, ~A + 3B \to 3A + B \}$.   
The reaction diagram (Figure~\ref{fig:geom_rep}) does not (even after relabeling species) have one of the forms shown in 
Figure~\ref{fig:ACR2spe2rxn}.  So, the network does not have ACR. 
Alternatively, we can draw the same conclusion using Theorem~\ref{thm:main-result-2-rxn-2-species}. 
Indeed, the embedded network obtained by removing $A$ (respectively, $B$) has arrow diagram $(\leftarrow, \to)$ (respectively, $(\to, \leftarrow)$) -- neither arrow diagram is $(\lradot)$ -- so Theorem~\ref{thm:main-result-2-rxn-2-species} implies that there is no ACR.  
%Alternatively, the two embedded networks have two reactant complexes each, and so 2(b) of Theorem~\ref{thm:main-result-2-rxn-2-species} is violated 
%(recall Remark~\ref{rem:1-reactant-embed-net}).  
\end{example}

\begin{example} \label{ex:procedure}
    Consider the network 
        $\{5A + B \to 7A,~ 5A + 3B \to A + 5B\}$, with reaction diagram as follows:
        
    \begin{center}
    	\begin{tikzpicture}[scale=.75]
        % axes
    	\draw (-1,0) -- (8, 0);
    	\draw (0,-.5) -- (0, 5.25);
        % reactions
    	\draw [->] (5,3) -- (1, 5);
    	\draw [->] (5,1) -- (7, 0.07);
        % labels
        \node [right] at (5, 3) {$5A + 3B$};
        \node [right] at (1.2, 5) {$A+5B$};
        \node [left] at (5,1) {$5A+B$};
        \node [above] at (7, 0.2) {$7A $};
    	\end{tikzpicture}
    \end{center}
    Here
     we 
    follow the steps in the proof of Theorem~\ref{thm:main-result-2-rxn-2-species}, to perform operations to transform this network into a generalized Shinar--Feinberg network.  We begin by relabeling the species (switching the roles of $A$ and $B$), which yields 
     $\{A + 5B \to 7B,~ 3A + 5B \to 5A + B\}$.  Next, we rescale the second reaction by $1/2$, so both reactions in the resulting network have the same length, as shown in the following reaction diagram (the axes are not shown to avoid excess white space):
     \begin{center}
    	\begin{tikzpicture}[scale=.75]
        % axes
    	%\draw (-1,0) -- (5, 0);
    	%\draw (0,-.5) -- (0, 7.2);
        % reactions
    	\draw [->] (1,5) -- (0.07, 7);
    	\draw [->] (3,5) -- (4,3);
        % labels
        \node [below] at (1, 5) {$A + 5B$};
        \node [right] at (0, 7) {$7B$};
        \node [right] at (3,5) {$3A+5B$};
        \node [right] at (4, 3) {$4A +3B $};
    	\end{tikzpicture}
    \end{center}
    Next, we apply partial scaling to species $A$ by a factor of $1/(-1)=-1$ and then partial scaling to species $B$ by $-1/2$.  This yields:
     \begin{center}
    	\begin{tikzpicture}[scale=.75]
        % axes
    	%\draw (-1,0) -- (5, 0);
    	%\draw (0,-.5) -- (0, 7.2);
        % reactions
    	\draw [->] (1,5) -- (2, 4);
    	\draw [->] (3,5) -- (2,6);
        % labels
        \node [left] at (1, 5) {$A + 5B$};
        \node [right] at (2, 4) {$2A+4B$};
        \node [right] at (3,5) {$3A+5B$};
        \node [above] at (2, 6) {$2A +6B $};
    	\end{tikzpicture}
    \end{center}
    Finally, we translate left by $1$ and down by $4$ to obtain $\{B\to A,~ 2A+B \to A+2B\}$, which is the $n=2$ generalized Shinar--Feinberg network.
\end{example}   

\subsection{General results}

The following result follows directly from Proposition~\ref{prop:2-rxn-cat-only} and Theorem~\ref{thm:main-result-2-rxn-2-species}. 

\begin{corollary}[ACR for networks with 2 species and 2 reactions]
    \label{cor:2-rxn-2-species}
    Let $G$ be a reaction network with exactly two species and two reactions. Assume that $G$ 
    admits a positive steady state.
Then $G$ has ACR if and only if 
	 $G$ can be obtained by network operations (Definition~\ref{def:operations}) from one of the following networks:
\begin{itemize}
	\item a network $\{0 \lra mA \}$, for some $m \geq 1$, 
         \item a degenerate-ACR network 
$\{ A \to 2A, ~ A+ nB \to nB \}$, for some $n \geq 1$, and
	\item a generalized Shinar--Feinberg network $\{ B \to A+2B, ~ pA+B \to (p-1)A \}$, for some %$n \in \{1,2,\dots\}$,
	 $p \geq 1$.
\end{itemize}	
\end{corollary}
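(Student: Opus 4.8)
The plan is to derive Corollary~\ref{cor:2-rxn-2-species} directly by combining the two preceding classification results according to whether or not the network possesses a catalyst-only species. Since $G$ has exactly two species and two reactions and admits a positive steady state, every such network falls into exactly one of two mutually exclusive cases: either one of its two species is a catalyst-only species, or neither is. These two cases are handled respectively by Proposition~\ref{prop:2-rxn-cat-only} and Theorem~\ref{thm:main-result-2-rxn-2-species}, so the corollary is essentially the union of their conclusions.

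First I would establish the backward direction, that each of the three listed network families has ACR. For the families $\{0 \lra mA\}$ and the degenerate-ACR network $\{A \to 2A,~ A+nB \to nB\}$, this is recorded in the proof of Proposition~\ref{prop:2-rxn-cat-only} (via Example~\ref{ex:acr-but-not-shinar--Feinberg} and Example~\ref{ex:daniele-3}); for the generalized Shinar--Feinberg family $\{B \to A+2B,~ pA+B \to (p-1)A\}$, this follows from Theorem~\ref{thm:main-result-2-rxn-2-species} (implication $3 \Rightarrow 1$). In all three cases, Theorem~\ref{thm:operations-ACR} guarantees that applying any sequence of network operations preserves ACR, so any network obtained from these three by operations also has ACR.

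For the forward direction, suppose $G$ has ACR. I would split into the two cases. If $G$ has a catalyst-only species, then Proposition~\ref{prop:2-rxn-cat-only} applies directly and shows that $G$ can be obtained by network operations from either $\{0 \lra mA\}$ or the degenerate-ACR network, which are two of the three listed families. If $G$ has no catalyst-only species, then Theorem~\ref{thm:main-result-2-rxn-2-species} applies (its standing hypotheses — two species, two reactions, a positive steady state, no catalyst-only species — are exactly met), and the equivalence $1 \Leftrightarrow 3$ there shows that $G$ can be obtained by operations from a generalized Shinar--Feinberg network $\{B \to A+2B,~ pA+B \to (p-1)A\}$, the third listed family. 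Combining the two cases yields precisely the stated list.

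Because both ingredient results have already been proven, there is no genuine obstacle here; the corollary is a routine bookkeeping assembly. The only point requiring a moment of care is verifying that the case split (catalyst-only species present or absent) is exhaustive and that the hypotheses of each ingredient result are met on the nose — in particular, that Theorem~\ref{thm:main-result-2-rxn-2-species} really does require the absence of catalyst-only species, so that the complementary case is fully covered by Proposition~\ref{prop:2-rxn-cat-only}. Once that is observed, the proof is simply the sentence ``This follows directly from Proposition~\ref{prop:2-rxn-cat-only} and Theorem~\ref{thm:main-result-2-rxn-2-species},'' which is indeed how the authors phrase it.
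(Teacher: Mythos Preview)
Your proposal is correct and matches the paper's approach exactly: the authors state that the corollary ``follows directly from Proposition~\ref{prop:2-rxn-cat-only} and Theorem~\ref{thm:main-result-2-rxn-2-species},'' which is precisely the catalyst-only/non-catalyst-only case split you describe. Your additional remarks about the backward direction (via Theorem~\ref{thm:operations-ACR}) and the verification that the hypotheses line up are appropriate elaborations of this one-line proof.
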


Our final result characterizes ACR for networks with two reactions (and at least two species).  It generalizes part of Theorem~\ref{thm:main-result-2-rxn-2-species} (in fact, all of that theorem can be generalized but the statements are somewhat technical, so we omit them).

\begin{theorem}[ACR for networks with 2 reactions]
    \label{thm:2-rxn}
    Let $G$ be a reaction network with exactly two reactions and at least two species. Assume that $G$ 
    admits a positive steady state
    and has no catalyst-only species. 
Then the following are equivalent:
    \begin{enumerate}
        \item $G$ has ACR;
\item $G$ satisfies the Shinar--Feinberg criterion (Proposition~\ref{prop:shinar--Feinberg-criterion-network}).
    \end{enumerate}
\end{theorem}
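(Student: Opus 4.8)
The plan is to prove the two implications separately, with $2 \Rightarrow 1$ being immediate and essentially all the work concentrated in $1 \Rightarrow 2$. For $2 \Rightarrow 1$, the Shinar--Feinberg criterion (Proposition~\ref{prop:shinar--Feinberg-criterion-network}) directly yields ACR in the distinguished species, so there is nothing further to do; note this direction does not even use the positive-steady-state or no-catalyst-only hypotheses. The remainder of the argument establishes $1 \Rightarrow 2$, and it parallels the corresponding part of Theorem~\ref{thm:main-result-2-rxn-2-species}, except that there is now a single relevant embedded network rather than two.

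First I would record the structural consequences of ACR. Since $G$ admits a positive steady state and has exactly two reactions, Lemma~\ref{lem:2-rxn-steady-state-implies-1-d} shows that $G$ is one-dimensional. Let $A_{i^*}$ be a species in which $G$ has ACR; because $G$ has no catalyst-only species, $A_{i^*}$ is not catalyst-only, so Lemma~\ref{lem:1-dim} applies. Writing $N$ for the embedded network obtained by deleting every species except $A_{i^*}$, Lemma~\ref{lem:1-dim} tells us that $N$ has no $2$-alternating subnetwork, its arrow diagram is not $(\lradot, \dots, \lradot)$, and the two reactant complexes of $G$ differ only in $A_{i^*}$. I would then pin down $N$ precisely: it has at most two reactions and (from the proof of Lemma~\ref{lem:1-dim}) admits a positive steady state, so, since a one-species network with a single reaction has no positive steady state, $N$ must have exactly two reactions and two distinct reactant complexes. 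Ruling out the diagrams $(\to,\to)$ and $(\la,\la)$ (which admit no positive steady state) and the one-reactant diagrams $(\to),(\la),(\lradot)$, the only surviving possibilities are $(\to,\la)$ and $(\la,\to)$. In particular the two reactant complexes of $G$ have distinct $A_{i^*}$-coefficients, so they are distinct complexes $y \neq z$.

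Next I would verify the two hypotheses of the Shinar--Feinberg criterion. For the deficiency, the key point, and the main obstacle, is to exclude the reversible configuration $y \rightleftharpoons z$, which would give only two complexes and deficiency $0$. This is exactly where the no-catalyst-only hypothesis is essential: if $G$ were reversible between $y$ and $z$, then its single reaction direction would be supported only on $A_{i^*}$ (because $y$ and $z$ differ only there), forcing every other species to be catalyst-only and contradicting the hypothesis, since $G$ has at least two species. Hence $G$ has $3$ or $4$ complexes; in either case there are $\ell = p-2$ linkage classes and $\dim(S)=1$, so the deficiency is $p-(p-2)-1=1$ (this is the computation inside the proof of Lemma~\ref{lem:deficiency-2-rxns}). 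For the second hypothesis I would take the two non-terminal complexes to be the reactant complexes $y$ and $z$ themselves: they differ only in $A_{i^*}$, and since $G$ is not reversible its reaction graph has no directed cycle, so each strong linkage class is a single complex, and any reactant complex, having an outgoing edge, lies outside every terminal strong linkage class. Thus $y$ and $z$ are non-terminal complexes differing only in $A_{i^*}$, which together with deficiency $1$ shows that $G$ satisfies the Shinar--Feinberg criterion and completes $1 \Rightarrow 2$.

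I expect the delicate points to be (i) the case analysis on the arrow diagram of $N$ needed to force distinct reactant complexes, and (ii) the exclusion of the reversible, deficiency-$0$ configuration via the no-catalyst-only hypothesis; once these are in place, the non-terminality of the reactant complexes and the deficiency count are routine.
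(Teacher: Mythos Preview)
Your proposal is correct and follows essentially the same approach as the paper: both use Lemma~\ref{lem:2-rxn-steady-state-implies-1-d} to get one-dimensionality, invoke Lemma~\ref{lem:1-dim} to constrain the embedded network $N_{i^*}$ and to conclude that the reactant complexes differ only in $A_{i^*}$, and then verify the deficiency-$1$ and non-terminality conditions. The only organizational difference is that the paper analyzes \emph{all} one-species embedded networks $N_j$ (showing each $N_j$ for $j\neq i^*$ has arrow diagram $(\lradot)$, which implicitly rules out the reversible two-complex case), whereas you work solely with $N_{i^*}$ and exclude the reversible configuration $y\rightleftharpoons z$ by a direct catalyst-only argument; these are two phrasings of the same observation.
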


\begin{proof}
The implication ``$2 \Rightarrow 1$'' is Proposition~\ref{prop:shinar--Feinberg-criterion-network}.  

For ``$1 \Rightarrow 2$'', assume that $G$ is as in the statement of the theorem and has ACR.
As $G$ admits a positive steady state, Lemma~\ref{lem:2-rxn-steady-state-implies-1-d} implies that $G$ is one-dimensional.  
For $i=1,2,\dots,s$, where $s$ is the number of species, let $N_i$ denote the one-species embedded network obtained from $G$ by removing all species except $i$.  By Lemma~\ref{lem:1-dim}, one of these networks $N_{i^*}$ has arrow diagram $(\to, \leftarrow)$ or $(\leftarrow, \to)$, and every remaining $N_j$ (for $j\neq i^*$) has 
at most one reactant complex.  In fact, each of these $N_j$ must have exactly one reactant complex (otherwise, some species $A_j$ would be a catalyst-only species) and so has arrow diagram $(\lradot)$, $(\to)$, or $(\leftarrow)$.  However, the cases of $(\to)$ and $(\leftarrow)$ are ruled out, because otherwise $G$ would not admit a positive steady state.  Now the arrow diagram $(\lradot)$ implies that $G$ has either 3 or 4 complexes.  
If $G$ has 3 complexes, there is only one linkage class and so the deficiency is $\delta = 3-1-1=1$.  If $G$ has 4 complexes, there are two linkage classes and so we get the same deficiency: $\delta=4-2-1=1$.  Thus, to show that $G$ satisfies the Shinar--Feinberg criterion, we need only show that the two reactant complexes (which are by definition non-terminal because each is the source of a reaction) differ in species $i^*$ and not in any species $j$ (for $j \neq i^*$).  Indeed, the reactants differ in $i^*$ because $N_{i^*}$ has arrow diagram $(\to, \leftarrow)$ or $(\leftarrow, \to)$; and the reactants do not differ in 
$j$, for $j \neq i^*$, because $N_j$ has 
only one reactant complex.
\end{proof}

%\begin{remark}
%{\color{orange} The conditions in Theorem~\ref{thm:2-rxn} mean that the projections of the geometric diagrams of networks with 2 reactions (with no catalysts-only species) that have ACR in species $A_{i_0}$, onto the coordinate axis have one of the forms in Figure~\ref{fig:proj-2-rxn}. }
%
%\end{remark}
%\begin{figure}
%    \centering
%    \includegraphics[scale=0.6]{Figure5.pdf}
%    \includegraphics[scale=0.6]{Figure6.pdf}
%    \caption{{\color{orange} Projections of the geometric diagram of a network with $n$ species and 2 reactions, with no catalyst-only species, and with ACR in $A_{i_0}$. These projections correspond to the arrow diagrams of the networks $N_i$ in the proof of Theorem~\ref{thm:2-rxn}.}}  {\color{red} (a), (b), (a), (b), (c), (d) is strange; maybe there is some way to clarify?}
%    \label{fig:proj-2-rxn}
%\end{figure}

\section{Discussion} \label{sec:discussion}
Detecting ACR is, in general, difficult.  Nevertheless, here we showed that this task is straightforward for certain classes of networks.  For one-species and (more generally) one-dimensional networks, ACR can be assessed easily from arrow diagrams or reaction diagrams (Theorem~\ref{thm:1-dim-stable}).  
For networks with only two reactions 
and no catalyst-only species, ACR is characterized by the Shinar--Feinberg criterion (Theorem~\ref{thm:2-rxn}). And, for networks with exactly two reactions, two species, and no catalyst-only species, we discovered that, up to network operations, the only networks with ACR are the generalized Shinar-Feinberg networks (Theorem~\ref{thm:main-result-2-rxn-2-species}).

As noted in the introduction, our interest in ACR for small networks is due to the potential of these networks to represent robust modules within larger biological systems~\cite{controller,shinar2010structural}.  
Thus, it is desirable to classify small networks with ACR.  Our results contribute to this important direction.  

Going forward we need more theoretical and computational results on detecting ACR.  Are there more examples of networks in which ACR can be easily assessed?  Our results suggest that answers to this question may use 
the Shinar--Feinberg criterion or its generalizations~\cite{karp2012complex, tonello2017network}, and may involve
in a crucial way geometric representations of the network.  Indeed, we are very much interested in more results in which ACR can be ``seen'' from the reaction diagram.

%*******************************************************************
%Acknowledgements
%*******************************************************************
\subsection*{Acknowledgements}
{\footnotesize
Part of this project was initiated by NM and AS at a SQuaRE (Structured Quartet Research Ensemble) at AIM. 
Also, some of this research was completed while AT was a visiting scholar at Texas A\&M University.  
NM and AS thank Luis Garc\'ia Puente, Elizabeth Gross, Heather Harrington, and Matthew Johnston for helpful discussions. Also, the authors are grateful to Daniele Cappelletti, Elisenda Feliu, 
Beatriz Pascual Escudero,
and Elisa Tonello for sharing ideas and examples. 
NM was partially supported by the NSF (DMS-1853525) and a Clare Boothe Luce professorship. 
AS was partially supported by the 
NSF (DMS-1752672) and the 
Simons Foundation (\#521874).
AT was partially supported by the Independent Research Fund Denmark and the Wallenberg AI, Autonomous Systems and Software Program (WASP) funded by the Knut and Alice Wallenberg Foundation.  
The authors acknowledge two referees for their helpful comments.}
%\appendix
%\input{Appendix}

\bibliographystyle{plain}
\bibliography{crnt}

\end{document}